\documentclass{article}

\usepackage{arxiv}

\usepackage[utf8]{inputenc} 
\usepackage[T1]{fontenc}    
\usepackage{hyperref}       
\usepackage{url}            
\usepackage{booktabs}       
\usepackage{amsfonts}       
\usepackage{nicefrac}       
\usepackage{microtype}      
\usepackage{lipsum}
\usepackage{graphicx}
\graphicspath{ {./images/} }

\usepackage{amsmath}
\usepackage{comment}
\usepackage{amsthm}
\usepackage{enumerate}
\usepackage{multicol}

\newtheorem{theorem}{Theorem}[section]
\newtheorem{lemma}{Lemma}[section]
\newtheorem{corollary}{Corollary}[section]
\newtheorem{remark}{Remark}[section]

\title{Distance Eigenvalues, Forwarding Indices, and Distance-based Topological Indices of Complement of Two Circulant Networks}

\author{
 John Rafael Macalisang Antalan \\
  Department of Mathematics and Physics\\
  Central Luzon State University (3120)\\
  Science City of Mu\~{n}oz, Nueva Ecija Philippines \\
  \texttt{jrantalan@clsu.edu.ph} \\
   \And
 Francis Joseph Hernandez Campe\~{n}a\\
  Mathematics and Statistics Department\\
  De La Salle University (1004)\\
  2401 Taft Ave., Malate, Manila, Philippines \\
  \texttt{francis.campena@dlsu.edu.ph} \\
 }

\begin{document}

\maketitle
\begin{abstract}
Let $n$ and $a$ be positive integers such that $2\leq a\leq \frac{n}{2}$. In this short note, we compute for the exact value of the distance spectral radius, vertex-forwarding index, and some distance-based topological indices of the complement of circulant networks $C_n(1,a)$ and $C_{m^h}(1,m,m^2,\ldots,m^{h-1})$. For $a\neq \frac{n}{2}$, the circulant $C_n(1,a)$ is called a double loop network while the circulant    $C_{m^h}(1,m,m^2,\ldots,m^{h-1})$ is called the multiplicative circulant network on $m^h$ vertices.
\end{abstract}


\section{Introduction}

Let $\Gamma$ be a graph with vertex set $V(\Gamma)$ and edge set $E(\Gamma)$. For two vertices $v_i$ and $v_j$ in $V(\Gamma)$, the distance between them denoted by $d_{\Gamma}(v_i,v_j)$ is the length of a shortest path between $v_i$ and $v_j$. The distance matrix of $\Gamma$ denoted by $\textbf{D}(\Gamma)$ is the matrix whose $ij-$ entry is $d_{\Gamma}(v_i,v_j)$ if $v_i\neq v_j$; and $0$ otherwise. The \textbf{distance spectral radius} of a graph $\Gamma$ denoted by $\rho(\Gamma)$ refers to the largest eigenvalue of $\textbf{D}(\Gamma)$. 

A graph property related to distance between vertices in a graph is the distance-based topological index. A \textbf{topological index} is a real number associated to a graph which characterizes its topology. It is invariant under graph automorphism. A topological index is said to be distance-based if its computation involves distance between vertices in graphs. Many of the known topological indices have applications in chemical graph theory. For instance, the applications of Wiener index $W$ defined by 
\begin{equation*}
W(\Gamma)=\sum_{\{v_i,v_j\}\subseteq V(\Gamma)}{d_{\Gamma}(v_i,v_j)}
\end{equation*}
which is the oldest distance-based topological index (related to molecular branching) is presented in \cite{Iyer}.

Another graph property that depends on distance between vertices is the concept of graph forwarding index. In order to discuss graph forwarding index, we need to talk first about routing in a graph. In what follows are some of the definitions presented by Xu and Xu in \cite{Xu}.  

A \textbf{routing} $R$ of a graph $\Gamma$ is a set of $n(n-1)$ elementary paths $R(x,y)$ specified for all ordered pairs $(x,y)$ of vertices of $\Gamma$. If each of the paths specified by $R$ is shortest, the routing $R$ is said to be \textbf{minimal}, denoted by $R_m$. If $R(x,y)=R(y,x)$ specified by $R$, that is to say the path $R(y,x)$ is the reverse of the path $R(x,y)$ for all $x,y$, then the routing is \textbf{symmetric}. Finally, the set of all possible routing in a graph $\Gamma$ is denoted by $\mathcal{R}$$(\Gamma)$ and the subset of $\mathcal{R}$$(\Gamma)$ whose elements contains all the minimum routing in a graph $\Gamma$ is denoted by $\mathcal{R}$$_m$$(\Gamma)$. 

Now, let $R\in\mathcal{R}$$(\Gamma)$ and $x\in V(\Gamma)$. The \textbf{load of a vertex} $x$ in $R$ of $\Gamma$ denoted by $\xi_x(\Gamma,R)$ is the number of paths specified by $R$ passing through $x$ and admitting $x$ as an inner vertex. The \textbf{ vertex-forwarding index of $\Gamma$ with respect to $R$}, denoted by $\xi(\Gamma,R)$  is the maximum number of paths of $R$ going through any vertex $x$ in $\Gamma$. Hence
\begin{equation*}
\xi(\Gamma,R)=\mbox{max}\{\xi_x(\Gamma,R):x\in V(\Gamma)\}.
\end{equation*}

As an illustrative example, consider the graph $\Gamma$ shown in Figure \ref{routing}. 

\begin{figure}[ht!]
\begin{center}
\includegraphics[width=0.3\columnwidth]{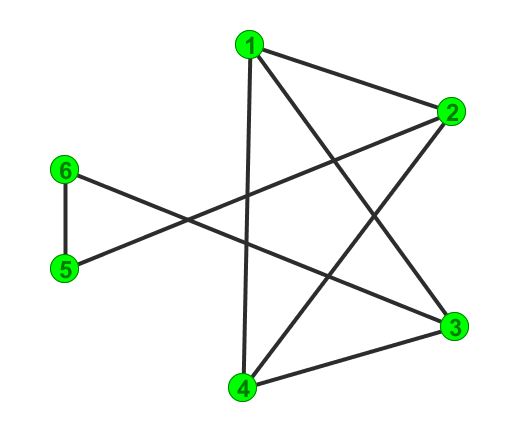}
\caption{{The graph $\Gamma$.
{\label{routing}}%
}}
\end{center}
\end{figure}

The sets 
\begin{center}
$R_1=\{(1,2),(1,3),(1,4),(1,2,5),(1,3,6),(2,1),(2,1,3),(2,4),(2,5),(2,5,6),$
$(3,1),(3,1,2),(3,4),(3,6,5),(3,6),(4,1),(4,2),(4,3),(4,2,5),(4,3,6),(5,2,1),$ $(5,2),(5,6,3),(5,2,4),(5,6),(6,3,1),(6,5,2),(6,3),(6,3,4),(6,5)\}$.
\end{center}
and 
\begin{center}
$R_2=\{(1,4,2),(1,3),(1,4),(1,3,6,5),(1,3,6),(2,1),(2,1,3),(2,4),(2,5),$ $(2,1,3,6), (3,1),(3,1,2),(3,4),(3,4,1,2,5),(3,6),(4,1),(4,2),(4,3),(4,3,6,5),$ $(4,3,6), (5,2,1),(5,6,3,1,2),(5,6,3),(5,2,4),(5,6),(6,3,1),(6,5,2),(6,3),$ $(6,3,4),(6,3,4,2,5)\}$.
\end{center}    

are routing of $\Gamma$. Observe that $R_1$ is a minimal routing while $R_2$ is not. Moreover, the load of vertex $3$ in $R_1$ of $\Gamma$ is $4$, that is $\xi_3(\Gamma,R_1)=4$. While the load of vertex $3$  in $R_2$ of $\Gamma$ is $9$, that is  $\xi_3(\Gamma,R_2)=9$.
\vspace{2mm} 

Finally, it can be verified that the load of each vertex in the routing $R_1$ of $\Gamma$ is given by: 1: 3, 2: 4, 3: 4, 4: 0, 5: 2, 6: 2. Hence, the forwarding index of $\Gamma$ with respect to $R_1$ is $4$. 

On the other hand, the load of each vertex in the routing $R_2$ of $\Gamma$ is given by: 1: 7, 2: 7, 3: 9, 4: 3, 5: 2, 6: 2. Hence, the forwarding index of $\Gamma$ with respect to $R_2$ is $9$.

The \textbf{vertex-forwarding index of $\Gamma$}, denoted by $\xi(\Gamma)$ is the minimum forwarding index over all possible routing of $\Gamma$. In symbol,  
\begin{equation*}
\xi(\Gamma)=\mbox{min}\{\xi(\Gamma,R):R\in {\mathcal{R}}({\Gamma})\}.
\end{equation*}

A similar definition for the edge-forwarding index of a graph $\Gamma$ denoted by $\pi(\Gamma)$ can be made by replacing the word ``vertex'' by ``edge'' in the definitions being stated.  

The concept of graph forwarding indices is applied in network designs.This application was discussed in the works of Chung et al. \cite{Chung}, Heydemann \cite{Hey}, and Xu et al. \cite{Xu}. 

Recently, the exact value of some distance-based topological indices and vertex-forwarding index of some families of circulant graph class were computed. 

Let $G$ be a group and $S$ be a subset of $G\backslash\{e\}$. A graph $\Gamma$ is a \textbf{Cayley graph} of $G$ with connection (or jump) set $S$, written $\Gamma=Cay(G,S)$ if  $V(\Gamma)=G$ and 
$E(\Gamma)=\{\{g,sg\}:g\in G, s\in S\}$. If $G=\langle\mathbb{Z}_n,+_n\rangle$, then the graph $\Gamma=Cay(G,S)=C_n(S)$ is called the \textbf{circulant graph} with connection (or jump) set $S$. Note that for $s$ and $s^{-1}$ in $\mathbb{Z}_n$, we have $\{\{g,s+_n g\}:g\in G\}=\{\{g,s^{-1}+_n g\}:g\in G\}$. Hence, for a circulant graph, we have $S\subseteq \{1,2,\ldots,\lfloor \frac{n+1}{2} \rfloor\}$. 

Circulant graphs can also be defined in terms of their adjacency matrix. In particular, circulant graphs are graphs with circulant adjacency matrix. Recall, an $n\times n$ matrix \textbf{$M$} is said to be \textbf{circulant} if each row in \textbf{$M$} is rotated one element to the right relative to the preceding row.  Figure \ref{circ} shows some examples of circulant graphs.

\begin{figure}[ht!]
\begin{center}
\includegraphics[width=0.22\columnwidth]{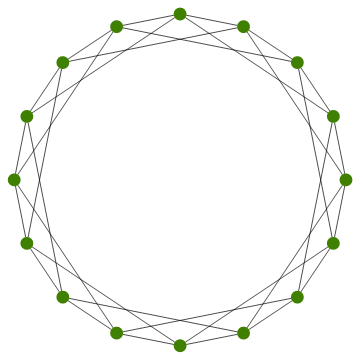}
\includegraphics[width=0.22\columnwidth]{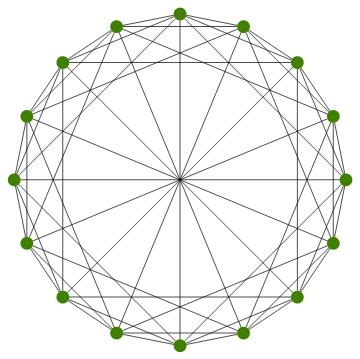}
\includegraphics[width=0.22\columnwidth]{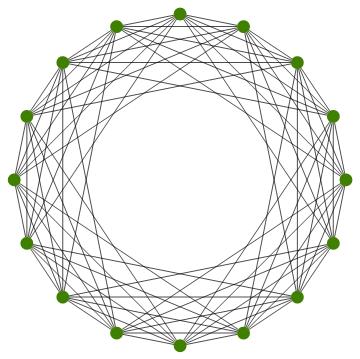}
\caption{The graphs $C_{16}(1,3)$, $C_{16}(1,2,4,8)$ and $C_{16}(1,2,3,4,5)$ respectively.}
\label{circ}
\end{center}
\end{figure}

Circulant graphs have vast applications in different fields of study; some of these fields include telecommunication networking \cite{Bermond}, VLSI (Very-large-scale integration) design \cite{Leighton}, and distributed computing \cite{Mans}. Other applications of circulant graphs are provided in the work of Monakhova \cite{Mona}, and the references therein. 

This short research note is motivated by our previous work \cite{Antalan} and the works of Ali et al. and Lin et al.. In \cite{Antalan}, we were able to determine the distance spectral radius, vertex-forwarding index, and bounds for the edge-forwarding index of the circulant network $C_{m^h}(1,m,m^2,\ldots,m^{h-1})$ where $m$ is odd. Ali et al. \cite{Ali1,Ali2} computed the Wiener, hyper-Wiener, and Schutlz index of circulants $C_n(1,a)$, where $a=2,3,4$, and $5$. While Lin et al. \cite{Liu} computed the exact values of the vertex-forwarding index of circulants with the following connection sets (i) $S=\{1,a\}$ where $a=\frac{n}{2}$, (ii) $S=\{1,2,\ldots, a\}$ where $2\leq a\leq \frac{n}{2}$, and (iii) $S=\{1,a\}$ where $2\leq a<\frac{n}{2}$. They also obtained an upper and lower bound for the edge-forwarding index of the said graphs.    

The main objective of this study is to determine the distance matrix of the complement of circulant networks (i) $C_n(1,a)$ and (ii) $C_{m^h}(1,m,m^2,\ldots,m^{h-1})$.

As a consequence, we have the following secondary objectives:

\begin{enumerate}
    \item Determine the exact value of the distance spectral radius of the complement of circulants (i) and (ii).
    \item Found the exact value of the vertex-forwarding index of the complement of circulants (i) and (ii).
    \item Give an upper bound and a lower bound for the edge-forwarding index for the complement of the circulants (i) and (ii). 
    \item Compute for some well-known distance-based topological indices for the complement of circulants (i) and (ii).
\end{enumerate}

\section{Preliminaries}

In this section, we define some important terms and state some useful results that will be used in the presentation of the main results.

\subsection{The Distance-based Topological Indices}

The distance-based topological indices that we consider in this paper are given in Tables \ref{tabdis}-\ref{tabrtrans}. Table \ref{tabdis} gives some of the most important purely distance-based topological indices. While Table \ref{tabdisdeg} gives some of the most important distance-degree-based topological indices. Recall that the \textbf{degree} of a vertex $v_i$ refers to the number of edges incident to $v_i$. Moreover, we say that a graph $\Gamma$ is \textbf{vertex-regular} if all the vertices in $V(\Gamma)$ have the same degree. 

On the other hand, Table \ref{tabtrans} gives some of the recent transmission-based topological indices of a graph. The \textbf{transmission} of a vertex $v_i$ in $\Gamma$ denoted by $Tr_{\Gamma}(v_i)$ refers to the sum of the distances from $v_i$ to all other vertices in $V(\Gamma)$. In terms of the distance matrix, the transmission of vertex $v_i$ is the sum of the entries of the row indexed by $v_i$ in $\textbf{D}(\Gamma)$. Moreover, we say that a graph $\Gamma$ is \textbf{transmission-regular} if all the vertices in $V(\Gamma)$ have the same transmission.    

Finally, Table \ref{tabrtrans} gives the newly introduced reciprocal transmission-based topological indices. The \textbf{reciprocal transmission} of a vertex $v_i$ in $\Gamma$ denoted by $rs_{\Gamma}(v_i)$ refers to the sum of the reciprocal of the distances from $v_i$ to all other vertices in $V(\Gamma)$. In terms of the distance matrix, the reciprocal transmission of vertex $v_i$ is the sum of the reciprocal of the entries of the row indexed by $v_i$ in $\textbf{D}(\Gamma)$.  

Note that some of the indices in Table \ref{tabtrans} and all the indices in Table \ref{tabrtrans}  first formally appeared in \cite{Rama1}. Note also that the Wiener index is also a transmission-based topological index while the Harary index is also a reciprocal transmission-based topological index.     

\begin{table}[ht!]
\resizebox{0.9\textwidth}{!}{\begin{tabular}{|c|c|c|}
\hline
\textbf{Topological Index} & \textbf{Mathematical Expression} & \textbf{Introduced by and Date Introduced}\\
\hline
Wiener\cite{Wien} & $\displaystyle W(\Gamma)=\sum_{\{v_i,v_j\}\subseteq V(\Gamma)}{d_{\Gamma}(v_i,v_j)}$  & Wiener, 1947\\
\hline
Hyper-Wiener\cite{Ran} & $\displaystyle WW(\Gamma)=\frac{1}{2}\sum_{\{v_i,v_j\}\subseteq V(\Gamma)}{\left[d_{\Gamma}(v_i,v_j)+d_{\Gamma}(v_i,v_j)^2\right]}$  & Randic, 1993 \\
\hline
Harary(\cite{Plav},\cite{Iva}) & $\displaystyle H(\Gamma)=\sum_{\{v_i,v_j\}\subseteq V(\Gamma)}{\frac{1}{d_{\Gamma}(v_i,v_j)}}$ & Plavsic et al. \& Ivanciuc et al., 1993 \\
\hline
\end{tabular}}
\caption{Some distance-based topological indices.} \label{tabdis}
\end{table}

\begin{table}[ht!]
\resizebox{0.9\textwidth}{!}{\begin{tabular}{|c|c|c|}
\hline
\textbf{Topological Index} & \textbf{Mathematical Expression} & \textbf{ Introduced by and Date Introduced}\\
\hline
Schultz\cite{Schultz} & $\displaystyle S(\Gamma)=\sum_{\{v_i,v_j\}\subseteq V(\Gamma)}[deg_{\Gamma}(v_i)+deg_{\Gamma}(v_j)]d_{\Gamma}(v_i,v_j)$  & Schultz, 1989\\
\hline
Gutman\cite{Gut} & $\displaystyle G(\Gamma)=\sum_{\{v_i,v_j\}\subseteq V(\Gamma)}[deg_{\Gamma}(v_i)deg_{\Gamma}(v_j)]d_{\Gamma}(v_i,v_j)$  & Gutman, 1994 \\
\hline
\vtop{\hbox{\strut Additively weighted} \hbox{\strut Harary\cite{Aliz}}} & $\displaystyle H_A(\Gamma)=\sum_{\{v_i,v_j\}\subseteq V(\Gamma)}\frac{deg_{\Gamma}(v_i)+deg_{\Gamma}(v_j)}{d_{\Gamma}(v_i,v_j)}$ & Alizadeh et al., 2013 \\
\hline
\vtop{\hbox{\strut Multiplicatively weighted} \hbox{\strut Harary\cite{Aliz}}} & $\displaystyle H_M(\Gamma)=\sum_{\{v_i,v_j\}\subseteq V(\Gamma)}\frac{deg_{\Gamma}(v_i)\cdot deg_{\Gamma}(v_j)}{d_{\Gamma}(v_i,v_j)}$ & Alizadeh et al., 2013 \\
\hline
\end{tabular}}
\caption{Some distance-degree-based topological indices.} \label{tabdisdeg}
\end{table}

\begin{table}[ht!]
\resizebox{0.9\textwidth}{!}{\begin{tabular}{|c|c|c|}
\hline
\textbf{Topological Index} & \textbf{Mathematical Expression} & \textbf{Introduced by and Date Introduced}\\
\hline
T. geometric-arithmetic\cite{Nara} & $\displaystyle T_{GA}(\Gamma)=\sum_{v_i,v_j\in E(\Gamma)}{\frac{2\sqrt{\sigma(v_i)\sigma(v_j)}}{\sigma(v_i)+\sigma(v_j)}}$  & Narayankar \& Selvan , 2017 \\
\hline
T. sum-connectivity\cite{Sha} & $\displaystyle T_{SC}(\Gamma)=\sum_{v_i,v_j\in E(\Gamma)}{\frac{1}{\sqrt{\sigma(v_i)+\sigma(v_j)}}}$  & Sharafdini \& Reti, 2020\\
\hline
T. arithmetic-geometric\cite{Rama1} & $\displaystyle T_{AG}(
\Gamma)=\sum_{v_i,v_j\in E(\Gamma)}{\frac{\sigma(v_i)+\sigma(v_j)}{2\sqrt{\sigma(v_i)\sigma(v_j)}}}$  & Ramane \& Talwar ,  n.d.\\
\hline
T. atom-bond connectivity\cite{Rama1} & $\displaystyle T_{ABC}(\Gamma)=\sum_{v_i,v_j\in E(\Gamma)}{\sqrt{\frac{\sigma(v_i)+\sigma(v_j)-2}{\sigma(v_i)\sigma(v_j)}}}$  & Ramane \& Talwar ,  n.d.\\
\hline
T. augmented Zagreb\cite{Rama1} & $\displaystyle T_{AZ}(\Gamma)=\sum_{v_i,v_j\in E(\Gamma)}{\left[\frac{\sigma(v_i)\sigma(v_j)}{\sigma(v_i)+\sigma(v_j)-2}\right]^3}$  & Ramane \& Talwar ,  n.d.\\
\hline
\end{tabular}}
\caption{Some transmission-based topological indices.} \label{tabtrans}
\end{table}

\begin{table}[ht!]
\resizebox{0.9\textwidth}{!}{\begin{tabular}{|c|c|c|}
\hline
\textbf{Topological Index} & \textbf{Mathematical Expression} & \textbf{Introduced by and Date Introduced}\\
\hline
R.T. arithmetic-geometric\cite{Rama1} & $\displaystyle RT_{AG}(\Gamma)=\sum_{v_i,v_j\in E(\Gamma)}{\frac{rs(v_i)+rs(v_j)}{2\sqrt{rs(v_i)rs(v_j)}}}$  & Ramane \& Talwar ,  n.d.\\
\hline
R.T. geometric-arithmetic\cite{Rama1} & $\displaystyle RT_{GA}(\Gamma)=\sum_{v_i,v_j\in E(\Gamma)}{\frac{2\sqrt{rs(v_i)rs(v_j)}}{rs(v_i)+rs(v_j)}}$  & Ramane \& Talwar , n.d. \\
\hline
R.T. sum-connectivity\cite{Rama1} & $\displaystyle RT_{SC}(\Gamma)=\sum_{v_i,v_j\in E(\Gamma)}{\frac{1}{\sqrt{rs(v_i)+rs(v_j)}}}$  & Ramane \& Talwar, n.d.\\
\hline
R.T. atom-bond connectivity\cite{Rama1} & $\displaystyle RT_{ABC}(\Gamma)=\sum_{v_i,v_j\in E(\Gamma)}{\sqrt{\frac{rs(v_i)+rs(v_j)-2}{rs(v_i)rs(v_j)}}}$  & Ramane \& Talwar ,  n.d.\\
\hline
R.T. augmented Zagreb\cite{Rama1} & $\displaystyle RT_{AZ}(\Gamma)=\sum_{v_i,v_j\in E(\Gamma)}{\left[\frac{rs(v_i)rs(v_j)}{rs(v_i)+rs(v_j)-2}\right]^3}$  & Ramane \& Talwar ,  n.d.\\
\hline
\end{tabular}}
\caption{Some reciprocal transmission-based topological indices.} \label{tabrtrans}
\end{table}

\subsection{Some Useful Results}

In this subsection, we state some important results that will be used in the presentation of the main results. We begin with the result involving the vertex-regularity of circulant graphs. 

\begin{lemma}
Let $\Gamma=C_n(S)$ be a circulant graph such that $|S|=k$. Then for any vertex $v\in V(\Gamma)$ we have
\begin{equation*}
    deg_{\Gamma}(v)=
    \begin{cases}
    2k-1 & \mbox{if $\frac{n}{2}\in S$}\\
    2k & \mbox{otherwise.}
    \end{cases}
\end{equation*}
\end{lemma}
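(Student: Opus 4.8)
The plan is to count the neighbors of an arbitrary vertex $v$ directly from the Cayley-graph description of $C_n(S)$. Since $C_n(S)=Cay(\langle\mathbb{Z}_n,+_n\rangle,S)$, two vertices $g$ and $h$ are adjacent precisely when they differ by a jump arising from the connection set, and because edges are undirected the note's identity $\{g,s+_n g\}=\{g,s^{-1}+_n g\}$ shows that a single jump $s\in S$ produces two candidate neighbors of $v$, namely $v+_n s$ and $v-_n s$ (here $-_n s=n-s=s^{-1}$). First I would record this observation formally: the neighborhood of $v$ is the set $\{v+_n s,\, v-_n s : s\in S\}$, so that computing $deg_{\Gamma}(v)$ amounts to determining the cardinality of this set.

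The crux is to decide when two of these candidates coincide. For a fixed $s$, we have $v+_n s=v-_n s$ if and only if $2s\equiv 0\pmod n$, that is $s=\frac{n}{2}$ (which can occur only when $n$ is even); otherwise $v+_n s$ and $v-_n s$ are distinct. For two \emph{different} jumps $s,s'\in S$, I would rule out any collision using the standing convention $S\subseteq\{1,2,\ldots,\lfloor\frac{n+1}{2}\rfloor\}$: the equalities $v+_n s=v+_n s'$ or $v-_n s=v-_n s'$ force $s=s'$, while a cross-equality such as $v+_n s=v-_n s'$ forces $s+_n s'\equiv 0$, i.e. $s'=n-s$, which the range restriction on $S$ forbids unless $s=s'=\frac{n}{2}$. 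Thus distinct jumps never share a neighbor of $v$.

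With the collision analysis in place the count is immediate. Each $s\in S$ with $s\neq\frac{n}{2}$ contributes exactly two distinct neighbors, whereas the jump $s=\frac{n}{2}$, if present, contributes only one. Hence if $\frac{n}{2}\notin S$ all $k$ jumps contribute $2$, giving $deg_{\Gamma}(v)=2k$; and if $\frac{n}{2}\in S$ then the remaining $k-1$ jumps contribute $2$ each while the self-paired jump contributes $1$, giving $deg_{\Gamma}(v)=2(k-1)+1=2k-1$. Since $v$ was arbitrary (and $\mathbb{Z}_n$ acts vertex-transitively on $C_n(S)$), the same value holds at every vertex, which establishes both cases of the claim.

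I expect the main obstacle to be the bookkeeping in the collision step, namely verifying that the convention on $S$ genuinely prevents two different jumps from producing a common neighbor, rather than the final arithmetic, which is routine once the correspondence between jumps and neighbor-pairs is pinned down.
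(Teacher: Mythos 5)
The paper states this lemma without any proof, treating it as a standard fact about circulant graphs, so there is no in-paper argument to compare against. Your neighbor-counting argument is the standard proof: the neighborhood of $v$ is $\{v+_n s,\, v-_n s : s\in S\}$, a jump is self-paired exactly when $2s\equiv 0 \pmod{n}$, i.e.\ $s=\frac{n}{2}$, and the tally $2k$ versus $2(k-1)+1=2k-1$ then follows. Both the single-jump collision analysis and the final count are correct.

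One caveat in your cross-collision step, which you yourself flagged as the likely obstacle: you assert that the convention $S\subseteq\{1,2,\ldots,\lfloor\frac{n+1}{2}\rfloor\}$ forbids $s'=n-s$ for distinct $s,s'\in S$, but with the paper's range as literally written this is false for odd $n$. In that case the range contains both $\frac{n-1}{2}$ and $\frac{n+1}{2}$, which are mutually inverse since $\frac{n-1}{2}+\frac{n+1}{2}=n\equiv 0\pmod{n}$, so a cross-collision can occur; and in that degenerate situation the lemma itself fails --- for example $C_5(2,3)$ has the same edge set as $C_5(2)$, the $5$-cycle, so every vertex has degree $2$ rather than $2k=4$. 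No proof can close this case, so the fix is to tighten the hypothesis: under the usual normalization $S\subseteq\{1,2,\ldots,\lfloor\frac{n}{2}\rfloor\}$ (equivalently, no two distinct elements of $S$ are inverses modulo $n$) your collision analysis is airtight. For even $n$ the paper's bound $\lfloor\frac{n+1}{2}\rfloor=\frac{n}{2}$ already gives this, and every connection set actually used in the paper ($\{1,a\}$ with $a\leq\frac{n}{2}$, and $\{1,m,m^2,\ldots,m^{h-1}\}$) satisfies the stronger normalization, so the lemma is applied correctly throughout; but your write-up should state that normalization explicitly rather than lean on the paper's $\lfloor\frac{n+1}{2}\rfloor$ convention.
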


The next three results show that the distance matrix of the complement of circulant graph is also circulant.

\begin{lemma}[Lin et al.\cite{Liu}]
Let $\Gamma$ be a circulant graph. Then the distance matrix of $\Gamma$ denoted by $\textbf{D}(\Gamma)$ is circulant. 
\label{lemliu}
\end{lemma}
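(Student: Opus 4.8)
The plan is to exploit the symmetry of $\Gamma = C_n(S)$ coming from its description as a Cayley graph on $\langle\mathbb{Z}_n,+_n\rangle$. Labeling the vertices by the elements $0,1,\ldots,n-1$ of $\mathbb{Z}_n$, I would first show that the shift map $\rho : i \mapsto i +_n 1$ is a graph automorphism of $\Gamma$. This follows directly from the edge description $E(\Gamma) = \{\{g, s +_n g\} : g \in \mathbb{Z}_n,\, s \in S\}$: if $\{g, s +_n g\}$ is an edge, then applying $\rho$ to both endpoints yields $\{g +_n 1,\, s +_n (g +_n 1)\}$, which is again of the required form (take the group element to be $g +_n 1$) and hence an edge. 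Since $\rho$ is a bijection on the vertex set, it maps edges to edges and non-edges to non-edges, so it is an automorphism.

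Next I would invoke the standard fact that graph automorphisms preserve distances: for any automorphism $\phi$ of $\Gamma$ and any vertices $u,v$, one has $d_{\Gamma}(\phi(u),\phi(v)) = d_{\Gamma}(u,v)$. Indeed, $\phi$ carries a shortest $u$--$v$ path to a walk of the same length joining $\phi(u)$ and $\phi(v)$, while $\phi^{-1}$ carries a shortest $\phi(u)$--$\phi(v)$ path back, and comparing the two lengths forces equality. Applying this to $\phi = \rho$ gives $d_{\Gamma}(i,j) = d_{\Gamma}(i +_n 1,\, j +_n 1)$ for all $i,j \in \mathbb{Z}_n$.

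Finally, I would translate this identity into the matrix statement. By definition of $\textbf{D}(\Gamma)$, the relation above says that its $(i,j)$ entry equals its $(i +_n 1,\, j +_n 1)$ entry for every $i,j$; reading indices modulo $n$, this means that row $i+1$ of $\textbf{D}(\Gamma)$ is row $i$ shifted one place to the right. This is precisely the defining property of a circulant matrix recalled in the introduction, which completes the argument.

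I expect the only points requiring genuine care to be the verification that $\rho$ is an automorphism and that distances are automorphism-invariant; once these are established, the circulant conclusion is immediate. In particular, no explicit distance computations within $\Gamma$ are needed, which is what keeps the argument short and purely structural.
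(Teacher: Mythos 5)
Your proof is correct: the shift $i \mapsto i +_n 1$ is an automorphism of $C_n(S)$, automorphisms preserve distances, and the resulting identity $d_{\Gamma}(i,j) = d_{\Gamma}(i +_n 1, j +_n 1)$ says precisely that the $(i,j)$ and $(i+1,j+1)$ entries of $\textbf{D}(\Gamma)$ agree (indices mod $n$), which is the paper's definition of a circulant matrix. The paper itself states this lemma without proof, attributing it to Lin et al.\ \cite{Liu}, and your argument is exactly the standard vertex-transitivity proof underlying that reference, so the two approaches coincide; the only implicit assumption worth flagging is connectedness of $\Gamma$ (so that all distances are finite and $\textbf{D}(\Gamma)$ is well defined), though your invariance argument goes through verbatim even with the convention $d_{\Gamma}(u,v)=\infty$ for vertices in different components.
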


\begin{lemma}[Meijer \cite{Mei}]
Let $\Gamma$ be a circulant graph. Then the complement of $\Gamma$ denoted by $\overline{\Gamma}$ is also circulant. 
\label{lemmei}
\end{lemma}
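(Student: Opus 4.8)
The plan is to argue directly from the adjacency-matrix characterization of circulant graphs recalled in the introduction, namely that a graph is circulant precisely when its adjacency matrix is circulant. First I would let $A$ denote the adjacency matrix of $\Gamma$; by hypothesis $A$ is circulant. The adjacency matrix of the complement $\overline{\Gamma}$ is then
\begin{equation*}
A(\overline{\Gamma}) = J - I - A,
\end{equation*}
where $J$ is the $n\times n$ all-ones matrix and $I$ is the $n\times n$ identity matrix. This identity simply records that two distinct vertices are adjacent in $\overline{\Gamma}$ exactly when they are non-adjacent in $\Gamma$, while the diagonal entries remain zero.

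The key observation is that the set of $n\times n$ circulant matrices is closed under addition and scalar multiplication: if each row of two matrices is the cyclic right-shift of its preceding row, then the same holds for each row of any linear combination of them. Since $I$ is circulant (each row being the cyclic right-shift of the preceding one, starting from $(1,0,\ldots,0)$) and $J$ is circulant (every row equals the all-ones vector, trivially the shift of the previous row), the matrix $J-I-A$ is a linear combination of circulant matrices and is therefore itself circulant.

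Hence $A(\overline{\Gamma})$ is circulant, and by the adjacency-matrix characterization $\overline{\Gamma}$ is a circulant graph. An equivalent route, matching the Cayley-graph definition, would be to write $\Gamma = Cay(\mathbb{Z}_n, S)$ and verify that $\overline{\Gamma} = Cay(\mathbb{Z}_n, \overline{S})$ with $\overline{S} = (\mathbb{Z}_n\setminus\{0\})\setminus S$; here the only point needing checking is that $\overline{S}$ remains symmetric (closed under negation modulo $n$), which is immediate from the symmetry of $S$, since $x\neq 0$ and $x\notin S$ force $-x\neq 0$ and $-x\notin S$. I do not expect any real obstacle in either approach; the single substantive step is confirming the closure of the class of circulant matrices under linear combinations (respectively, the symmetry of the complementary connection set), both of which are routine.
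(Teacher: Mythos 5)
Your proof is correct. The paper itself offers no argument for this lemma---it is quoted directly from Meijer's thesis \cite{Mei}---so there is no in-paper proof to compare against; your adjacency-matrix computation $A(\overline{\Gamma}) = J - I - A$, combined with the closure of circulant matrices under linear combinations and the observation that $I$ and $J$ are circulant, is a complete and standard justification, and it sits squarely within the framework the paper sets up, since the introduction explicitly recalls that circulant graphs are exactly the graphs with circulant adjacency matrix. Your alternative Cayley-graph route, writing $\overline{\Gamma} = Cay(\mathbb{Z}_n, \overline{S})$ with $\overline{S} = (\mathbb{Z}_n\setminus\{0\})\setminus S$ and checking that $\overline{S}$ is closed under negation, is equally valid and is arguably closer to how the paper defines circulants in the first place; either argument would serve as a self-contained proof of the cited result.
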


Combining Lemma \ref{lemliu} and Lemma \ref{lemmei} we have 

\begin{corollary}
Let $\Gamma$ be a circulant graph. The distance matrix of the complement of $\Gamma$ denoted by $\textbf{D}(\overline{\Gamma})$ is circulant.
\label{cordmatccirc}
\end{corollary}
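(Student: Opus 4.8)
The plan is to derive the statement as an immediate composition of the two preceding lemmas, with no further combinatorial work required. The guiding observation is that the class of circulant graphs is closed under complementation, so the distance-matrix result for circulant graphs need only be applied to the complement rather than to $\Gamma$ itself.

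First I would invoke Lemma \ref{lemmei} (Meijer): since $\Gamma$ is circulant, its complement $\overline{\Gamma}$ is again circulant. Writing $\Gamma=C_n(S)$, the complement is $\overline{\Gamma}=C_n(\overline{S})$, where $\overline{S}$ consists of exactly those admissible jumps in $\mathbb{Z}_n$ not belonging to $S$; this exhibits $\overline{\Gamma}$ as a member of the same class of graphs to which Lemma \ref{lemliu} applies. Then I would apply Lemma \ref{lemliu} (Lin et al.) with $\overline{\Gamma}$ in place of $\Gamma$: that lemma asserts the distance matrix of any circulant graph is circulant, so, $\overline{\Gamma}$ being circulant by the first step, $\textbf{D}(\overline{\Gamma})$ is circulant. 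This is precisely the assertion of the corollary.

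Because the argument is a two-step chaining of quoted results, there is no substantive obstacle. The only point deserving a remark is the well-definedness of $\textbf{D}(\overline{\Gamma})$: the distance matrix has finite entries only when $\overline{\Gamma}$ is connected, so strictly speaking one should check (or assume, as is implicit when Lemma \ref{lemliu} is invoked for $\overline{\Gamma}$) that the complement is connected. For the specific complements treated in the sequel this connectivity is routine to verify, and it is the single hypothesis one must keep track of when passing from $\Gamma$ to $\overline{\Gamma}$.
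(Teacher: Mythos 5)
Your proposal is correct and matches the paper's own argument exactly: the paper obtains this corollary by combining Lemma \ref{lemliu} and Lemma \ref{lemmei} in precisely the order you describe, applying the distance-matrix lemma to the complement once Meijer's lemma guarantees it is circulant. Your added remark on connectivity of $\overline{\Gamma}$ (needed for $\textbf{D}(\overline{\Gamma})$ to be well defined) is a reasonable caveat the paper leaves implicit and handles case by case later.
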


A consequence of Lemmma \ref{lemliu} and Corollary \ref{cordmatccirc} about the transmission of circulant graph $\Gamma$ and its complement $\overline{\Gamma}$ is given next.

\begin{corollary}
Let $\Gamma$ be a circulant graph. Then $\Gamma$ and its complement $\overline{\Gamma}$ are transmission-regular. 
\end{corollary}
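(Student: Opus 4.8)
The plan is to reduce transmission-regularity to a single structural property of circulant matrices, namely that all of their row sums coincide. First I would invoke Lemma~\ref{lemliu}, which guarantees that the distance matrix $\textbf{D}(\Gamma)$ of the circulant graph $\Gamma$ is itself circulant. The key observation is then purely combinatorial: by the definition of a circulant matrix recalled in the Introduction, each row is obtained from the preceding one by a cyclic shift of its entries one position to the right, so every row contains precisely the same multiset of numbers. Consequently all the row sums of $\textbf{D}(\Gamma)$ are equal.

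Next I would connect this to the transmission invariant. By the definition given in the Preliminaries, the transmission $Tr_{\Gamma}(v_i)$ is exactly the sum of the entries of the row of $\textbf{D}(\Gamma)$ indexed by $v_i$. Since all of these row sums coincide, the value of $Tr_{\Gamma}(v_i)$ is independent of the chosen vertex $v_i$, which is precisely the assertion that $\Gamma$ is transmission-regular.

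The argument for the complement is then identical, once the correct input is substituted. By Corollary~\ref{cordmatccirc}, the distance matrix $\textbf{D}(\overline{\Gamma})$ of the complement is also circulant, so the same cyclic-shift reasoning shows that every row of $\textbf{D}(\overline{\Gamma})$ has the same sum, and hence $\overline{\Gamma}$ is transmission-regular as well.

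I do not anticipate a genuine obstacle here, since the content of the corollary is essentially already packaged inside Lemma~\ref{lemliu} and Corollary~\ref{cordmatccirc}. The only point requiring an explicit line of justification is the claim that a circulant matrix has constant row sums, and even this follows immediately from the defining cyclic-shift property stated earlier in the excerpt. The result should therefore be presentable as a short two- or three-sentence proof that cites the two distance-matrix lemmas and applies the row-sum observation to each of $\Gamma$ and $\overline{\Gamma}$ in turn.
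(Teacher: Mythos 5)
Your proposal is correct and matches the paper's own (implicit) argument exactly: the paper derives this corollary precisely as a consequence of Lemma~\ref{lemliu} and Corollary~\ref{cordmatccirc}, with the transmission of a vertex read off as the row sum of the circulant distance matrix. Your only addition is spelling out the easy observation that a circulant matrix has constant row sums, which the paper leaves tacit.
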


The connection of vertex transmission to distance spectral radius, vertex-forwarding index, and bounds for the edge-forwarding index of circulant graph is given in the next series of useful results.

\begin{lemma}[Lin et al.\cite{Liu}]
Let $\Gamma$ be a circulant graph, $v\in V(\Gamma)$, and $\rho(\Gamma)$ be its distance spectral radius. Then
\begin{equation*}
    \rho(\Gamma)=Tr_{\Gamma}(v).
\end{equation*}
\end{lemma}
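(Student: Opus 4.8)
The plan is to exploit the fact, established in Lemma \ref{lemliu}, that the distance matrix $\textbf{D}(\Gamma)$ of a circulant graph is itself circulant and therefore has a constant row sum. First I would observe that, because $\Gamma$ is transmission-regular (the corollary immediately preceding this lemma), the common row sum of $\textbf{D}(\Gamma)$ equals $Tr_{\Gamma}(v)$ and is independent of the chosen vertex $v$, since the diagonal entry $d_{\Gamma}(v_i,v_i)=0$ contributes nothing to the $i$-th row sum. Denote this common value by $r$, so that $r=Tr_{\Gamma}(v)$ for every $v\in V(\Gamma)$.

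The second step is to exhibit $r$ as an eigenvalue of $\textbf{D}(\Gamma)$. Since every row sums to $r$, the all-ones vector $\mathbf{1}=(1,1,\ldots,1)^{\top}$ satisfies $\textbf{D}(\Gamma)\mathbf{1}=r\mathbf{1}$, so $r$ is an eigenvalue with eigenvector $\mathbf{1}$. Because $\textbf{D}(\Gamma)$ is real and symmetric, all of its eigenvalues are real, and hence $\rho(\Gamma)\geq r$, as $\rho(\Gamma)$ is by definition the largest eigenvalue.

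The third and crucial step is to show that no eigenvalue exceeds $r$, which forces $r$ to be the largest. Here I would use the nonnegativity of the distance matrix together with its zero diagonal. By Gershgorin's disc theorem, every eigenvalue $\lambda$ lies in a disc centered at some diagonal entry $\textbf{D}(\Gamma)_{ii}=0$ with radius equal to the off-diagonal absolute row sum, which is precisely $Tr_{\Gamma}(v_i)=r$; hence $|\lambda|\leq r$ for every eigenvalue $\lambda$. Alternatively one may invoke the Perron--Frobenius theorem: for connected $\Gamma$ the matrix $\textbf{D}(\Gamma)$ is irreducible and nonnegative, its Perron root coincides with its spectral radius, and the positive eigenvector $\mathbf{1}$ identifies $r$ as that root. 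Combining $\rho(\Gamma)\geq r$ from the second step with $\rho(\Gamma)\leq r$ from this step yields $\rho(\Gamma)=r=Tr_{\Gamma}(v)$.

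I expect the main obstacle to be this third step, namely verifying that the eigenvalue attached to $\mathbf{1}$ is genuinely the largest rather than merely one among several. The delicate point is that the distance spectral radius is the largest eigenvalue $\max_i\lambda_i$, not the operator-norm spectral radius $\max_i|\lambda_i|$; these coincide here only because the zero diagonal makes the Gershgorin bound symmetric about the origin (equivalently, because the Perron root dominates every eigenvalue in modulus). I would therefore phrase the bound as $|\lambda|\leq r$ so that both $\lambda\leq r$ and the identification of $r$ with the spectral radius follow at once.
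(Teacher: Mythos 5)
Your proposal is correct and is essentially the intended argument: the paper states this lemma without proof, citing Lin et al.~\cite{Liu}, and the standard proof there runs exactly as yours does, using that $\textbf{D}(\Gamma)$ is a nonnegative circulant matrix with constant row sums, so that $\mathbf{1}$ is an eigenvector with eigenvalue $Tr_{\Gamma}(v)$, which Perron--Frobenius (or, as you note, Gershgorin with the zero diagonal) identifies as the largest eigenvalue. Your closing observation distinguishing the largest eigenvalue from the largest modulus is precisely the right point of care, and your resolution of it via $|\lambda|\leq r$ is sound.
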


\begin{lemma}[Lin et al. \cite{Liu}]
If $\Gamma$ is a connected circulant graph of order $n$, then
\begin{equation*}
 \xi(\Gamma)=\xi_m(\Gamma)=\rho(\Gamma)-(n-1).
\end{equation*}
\label{lxi}
\end{lemma}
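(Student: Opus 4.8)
The plan is to establish the identity by sandwiching both $\xi(\Gamma)$ and $\xi_m(\Gamma)$ between a universal lower bound and a matching upper bound obtained from an explicitly constructed uniform-load routing. Throughout I would exploit two facts already available: the circulant graph $\Gamma=C_n(S)$ is vertex-transitive, being a Cayley graph of $\langle\mathbb{Z}_n,+_n\rangle$, and by the preceding lemma its distance spectral radius satisfies $\rho(\Gamma)=Tr_{\Gamma}(v)$ for every vertex $v$, so in particular $\Gamma$ is transmission-regular.

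First I would prove the lower bound $\xi(\Gamma)\geq\rho(\Gamma)-(n-1)$ by a counting argument. For any routing $R$, a path $R(x,y)$ of length $|R(x,y)|$ contributes exactly $|R(x,y)|-1$ inner vertices to the total load. Summing over all $n(n-1)$ ordered pairs and using $|R(x,y)|\geq d_{\Gamma}(x,y)$ together with transmission-regularity gives
\begin{equation*}
\sum_{v\in V(\Gamma)}\xi_v(\Gamma,R)=\sum_{(x,y)}\bigl(|R(x,y)|-1\bigr)\geq\sum_{(x,y)}\bigl(d_{\Gamma}(x,y)-1\bigr)=n\,Tr_{\Gamma}(v)-n(n-1).
\end{equation*}
Since the maximum load is at least the average, $\xi(\Gamma,R)\geq Tr_{\Gamma}(v)-(n-1)=\rho(\Gamma)-(n-1)$ for every $R$; minimizing over $R$ yields the claim, and the same bound holds a fortiori over the smaller family of minimal routings, so $\xi_m(\Gamma)\geq\rho(\Gamma)-(n-1)$ as well.

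The crux is the matching upper bound, for which I would construct a single minimal routing whose load is identical at every vertex. Using the Cayley structure, fix for each $g\in\mathbb{Z}_n$ a shortest path $R(0,g)$ from the identity $0$ to $g$, and extend to all ordered pairs by translation, setting $R(h,h+_n g)=h+R(0,g)$, adding $h$ to every vertex along the path. Because each map $x\mapsto x+_n h$ is a graph automorphism, every translated path remains a geodesic, so $R$ is a legitimate minimal routing; equivariance then forces $\xi_v(\Gamma,R)$ to be independent of $v$, whence each load equals the average computed above, namely $\rho(\Gamma)-(n-1)$, giving $\xi_m(\Gamma)\leq\rho(\Gamma)-(n-1)$. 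Combining this with the trivial relation $\xi(\Gamma)\leq\xi_m(\Gamma)$ closes the chain
\begin{equation*}
\rho(\Gamma)-(n-1)\leq\xi(\Gamma)\leq\xi_m(\Gamma)\leq\rho(\Gamma)-(n-1),
\end{equation*}
forcing all three quantities to coincide.

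The main obstacle is verifying that the translated routing is genuinely uniform in load, which is precisely where vertex-transitivity is essential. I would make the equivariance argument rigorous by observing that a vertex $v$ occurs as an inner vertex of $R(h,h+_n g)$ if and only if $v-_n h$ occurs as an inner vertex of $R(0,g)$, which sets up a load-preserving bijection between the vertices and shows $\xi_v(\Gamma,R)=\xi_0(\Gamma,R)$ for all $v$.
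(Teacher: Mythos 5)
Your proof is correct and complete: the averaging lower bound, the translation-equivariant minimal routing $R(h,h+_n g)=h+R(0,g)$, and the bijection sending $v$ to $v-_n h$ (which shows every vertex carries load exactly $Tr_{\Gamma}(0)-(n-1)=\rho(\Gamma)-(n-1)$, since the inner vertices of each fixed geodesic $R(0,g)$ are distinct) all hold up. The paper itself supplies no proof of this lemma, importing it from Lin et al., and your argument is essentially the standard one behind that citation --- the Heydemann--Meyer--Sotteau uniform-routing formula for Cayley graphs combined with the fact that $\rho(\Gamma)=Tr_{\Gamma}(v)$ for transmission-regular graphs --- so you have matched the intended approach.
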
 

\begin{lemma}[Lin et al. \cite{Liu}]
If $\Gamma$ is a connected $r-$regular circulant graph of order $n$, then
\begin{equation*}
\frac{2\rho(\Gamma)}{r}\leq \pi(\Gamma)\leq n+\rho(\Gamma)-(2r-1).
\end{equation*}
\label{lpi}
\end{lemma}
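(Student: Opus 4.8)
The plan is to establish the two inequalities by separate arguments: the lower bound by a routing-independent averaging (double-counting) estimate, and the upper bound by producing one explicit minimal routing and controlling its heaviest edge. For the lower bound I would fix any routing $R$ and note that summing the edge load over all edges counts each of the $n(n-1)$ paths once for every edge it uses, so
\[
\sum_{e\in E(\Gamma)}\pi_e(\Gamma,R)=\sum_{x\ne y}\big(\text{length of }R(x,y)\big)\ \ge\ \sum_{x\ne y}d_\Gamma(x,y)=\sum_{x\in V(\Gamma)}Tr_\Gamma(x)=n\,\rho(\Gamma),
\]
where the last two equalities use that $\Gamma$ is transmission-regular with common transmission equal to $\rho(\Gamma)$. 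Since $\Gamma$ is $r$-regular it has $|E(\Gamma)|=\tfrac{nr}{2}$ edges, so the maximum edge load is at least the average, giving $\max_e\pi_e(\Gamma,R)\ge \tfrac{2\rho(\Gamma)}{r}$. As this holds for every $R$, taking the minimum over routings yields $\pi(\Gamma)\ge \tfrac{2\rho(\Gamma)}{r}$.

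For the upper bound I would exhibit one economical routing. Using that a circulant is vertex-transitive, fix the vertex $0$, pick a shortest path $R(0,j)$ to each $j\ne 0$, and translate it by setting $R(i,i+j)=i+R(0,j)$; this is a minimal routing invariant under the $\mathbb{Z}_n$-action, so all edges sharing a jump-type $s$ carry a common load $\ell(s)$ and $\pi(\Gamma,R)=\max_{s\in S}\ell(s)$. Comparing total type-$s$ usage with the number of type-$s$ edges gives, for $s\ne\tfrac n2$, the identity $\ell(s)=\sum_{j\ne 0}(\text{number of type-}s\text{ edges on }R(0,j))$. The $r$ neighbours of $0$ only ever contribute their own direct edges, so this reduces to a constant ($2$) plus the type-$s$ traffic generated by the $n-1-r$ non-adjacent vertices; bounding that traffic crudely by the total non-adjacent transmission $\sum_{d_\Gamma(0,u)\ge 2}d_\Gamma(0,u)=\rho(\Gamma)-r$ already yields $\ell(s)\le \rho(\Gamma)-r+2$, which is comfortably below $n+\rho(\Gamma)-(2r-1)$ because $r\le n-1$.

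I expect the real obstacle to be the matching edges, i.e.\ the type $s=\tfrac n2$ present exactly when $r$ is odd. There are only $\tfrac n2$ such edges, so their per-edge load acquires an extra factor and the crude estimate above is too weak. The fix, and the reason the bound is stated in the seemingly non-optimal form $n+\rho(\Gamma)-(2r-1)$, is that a shortest path can use the jump $\tfrac n2$ at most once (using it twice cancels and contradicts minimality); hence the type-$\tfrac n2$ traffic is bounded by the number $n-1-r$ of non-adjacent vertices rather than by $\rho(\Gamma)-r$, giving $\ell(\tfrac n2)\le 2n-2r$. This meets the claimed bound precisely because $\rho(\Gamma)\ge n-1$ for every graph. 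Carrying out this case analysis carefully---verifying the per-type load formula in both parities, the edge count, and the two regimes---is the delicate part; the remainder follows from transmission-regularity together with the identification $\rho(\Gamma)=Tr_\Gamma(v)$ used above.
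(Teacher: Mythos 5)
Your proof is sound, but there is nothing in this paper to compare it against: Lemma~\ref{lpi} is stated as a quoted result of Lin et al.\ \cite{Liu}, and the paper gives no proof of it, so your argument stands or falls on its own merits. It stands. The lower bound is the standard averaging estimate: for any routing $R$ the total edge load $\sum_{e\in E(\Gamma)}\pi_e(\Gamma,R)$ equals the total length of the $n(n-1)$ routed paths, hence is at least $\sum_{x\neq y}d_\Gamma(x,y)=n\,Tr_\Gamma(v)=n\rho(\Gamma)$ by transmission-regularity of circulants together with the identification $\rho(\Gamma)=Tr_\Gamma(v)$ quoted earlier in the paper; dividing by $|E(\Gamma)|=nr/2$ and minimizing over $R$ gives $\pi(\Gamma)\geq 2\rho(\Gamma)/r$, which is the same averaging route used in the literature. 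Your upper bound, by contrast, is a self-contained construction rather than an appeal to a cited inequality, and the delicate points all check out: a translation-invariant minimal routing has edge load constant on each jump class; the identities $\ell(s)=\sum_{j\neq 0}\#_s(R(0,j))$ for $s\neq \frac{n}{2}$ and $\ell(\frac{n}{2})=2\sum_{j\neq 0}\#_{n/2}(R(0,j))$ are correct (the factor $2$ arising because there are only $\frac{n}{2}$ matching edges, present exactly when $r$ is odd by the paper's degree lemma); neighbours of $0$ must be routed along the direct edge in any minimal routing, so adjacent targets contribute exactly $2$ to $\ell(s)$; non-adjacent targets contribute at most $d_\Gamma(0,j)$ type-$s$ edges each, summing to $\rho(\Gamma)-r$; and in an abelian Cayley graph a shortest path does use the jump $\frac{n}{2}$ at most once, since two occurrences of it cancel and deleting them yields a strictly shorter walk with the same endpoints. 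The closing comparisons require only $r\leq n-1$ and $\rho(\Gamma)\geq n-1$, both automatic for a connected graph, and you flagged them. Note that your routing actually yields the sharper estimate $\pi(\Gamma)\leq\max\{\rho(\Gamma)-r+2,\;2(n-r)\}$, of which the stated bound $n+\rho(\Gamma)-(2r-1)$ is a common relaxation; that strengthening is worth recording explicitly if you write the argument up in full.
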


Before going to the last two final result in this section, we recall that the \textbf{diameter} of a graph $\Gamma$ denoted by $diam(\Gamma)$ refers to the maximum distance between any pair of vertices in $V(\Gamma)$.

The two final results of this section are the following:

\begin{lemma}[Gutman et al. \cite{Gut1}]
Let $\Gamma$ be a graph with $n$ number of vertices and $m$ number of edges. If for any two adjacent vertices $u$ and $v$ in $V(\Gamma)$, there exists a third vertex $w$ in $V(\Gamma)$ that is not adjacent to either $u$ or $v$ [also called Property *] then
\begin{enumerate}[i]
    \item $\overline{\Gamma}$ is connected,
    \item the diameter of $\overline{\Gamma}$ is two, and
    \item the Wiener index of $\overline{\Gamma}$satisfies the identity
    \begin{equation*}
        W(\overline{\Gamma})=\binom{n}{2}+m.
    \end{equation*}
\end{enumerate}
\label{lemgut}
\end{lemma}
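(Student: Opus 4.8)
The plan is to establish parts (i) and (ii) together by showing that $\overline{\Gamma}$ has diameter at most two, and then to derive part (iii) by a direct counting argument once the diameter is pinned down. The whole proof is an exercise in reinterpreting the adjacency relation of $\Gamma$ inside its complement.

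First I would translate Property * into the language of $\overline{\Gamma}$. Two vertices are adjacent in $\overline{\Gamma}$ precisely when they are non-adjacent in $\Gamma$, so if $u$ and $v$ are adjacent in $\Gamma$, the hypothesis produces a vertex $w$ with $uw\notin E(\Gamma)$ and $vw\notin E(\Gamma)$; equivalently $uw\in E(\overline{\Gamma})$ and $vw\in E(\overline{\Gamma})$, so $u - w - v$ is a path of length two in $\overline{\Gamma}$. Next I would verify the diameter claim by a short case analysis on an arbitrary pair of distinct vertices $x$ and $y$. If $xy\notin E(\Gamma)$, then $xy\in E(\overline{\Gamma})$ and $d_{\overline{\Gamma}}(x,y)=1$; if instead $xy\in E(\Gamma)$, then $x$ and $y$ are non-adjacent in $\overline{\Gamma}$, but the path $x - w - y$ from the previous step gives $d_{\overline{\Gamma}}(x,y)=2$. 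In either case every pair is joined by a path, so $\overline{\Gamma}$ is connected, proving (i), and no distance exceeds two. Since any edge of $\Gamma$ furnishes a pair at distance exactly two (assuming $m\geq 1$, which is the case of interest), the diameter is exactly two, proving (ii).

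Finally, for (iii) I would exploit the fact that in a graph of diameter two every distance equals $1$ or $2$. The pairs at distance $1$ are exactly the edges of $\overline{\Gamma}$, of which there are $\binom{n}{2}-m$, while the pairs at distance $2$ are exactly the edges of $\Gamma$, of which there are $m$. Summing the distances over all unordered pairs yields
\begin{equation*}
W(\overline{\Gamma})=1\cdot\left(\binom{n}{2}-m\right)+2\cdot m=\binom{n}{2}+m,
\end{equation*}
which is the desired identity.

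The only genuine obstacle is the translation step: the entire argument hinges on reading Property * correctly as the guarantee of a common non-neighbour in $\Gamma$, which is precisely what supplies a length-two detour in $\overline{\Gamma}$ whenever the direct complement-edge is missing. After that observation is in place, both the diameter bound and the Wiener-index computation are routine countings, and I would not expect either to require more than the case split and the edge-count $|E(\overline{\Gamma})|=\binom{n}{2}-m$ already used above.
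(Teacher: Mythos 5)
Your proof is correct and is essentially the standard argument: the paper itself states this lemma without proof (citing Gutman et al.\ \cite{Gut1}), and your translation of Property * into a common non-neighbour providing a length-two path in $\overline{\Gamma}$, followed by the count $W(\overline{\Gamma})=1\cdot\left(\binom{n}{2}-m\right)+2\cdot m$, is exactly the argument in that reference. Your parenthetical caveat that $m\geq 1$ is needed for the diameter to be exactly two (rather than at most two) is a fair observation about an implicit assumption in the statement, and does not affect the Wiener-index identity.
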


\begin{lemma}[Gutman et al. \cite{Gutman}]
If $\Gamma$ is a connected graph with $diam(\Gamma)\geq 4$, then $\Gamma$ has property *.
\label{lemdiam4}
\end{lemma}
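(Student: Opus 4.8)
The plan is to argue directly from the diameter hypothesis together with the triangle inequality. Recall that property * requires that for \emph{every} pair of adjacent vertices $u,v$ there be a vertex $w$ adjacent to neither. First I would fix an arbitrary edge $\{u,v\}$ and note that the set of ``bad'' vertices for this edge---those equal to, or adjacent to, $u$ or $v$---is exactly the union of closed neighborhoods $N[u]\cup N[v]$, where $N[x]$ denotes $x$ together with its neighbors. Verifying property * for the edge $\{u,v\}$ is therefore equivalent to showing $N[u]\cup N[v]\neq V(\Gamma)$.

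The key step is to bound the pairwise distances inside $N[u]\cup N[v]$. Since $u$ and $v$ are adjacent we have $d_{\Gamma}(u,v)=1$, every vertex of $N[u]$ lies within distance $1$ of $u$, and every vertex of $N[v]$ lies within distance $1$ of $v$. A short application of the triangle inequality then shows that any two vertices of $N[u]\cup N[v]$ are at distance at most $3$ in $\Gamma$: the bound is at most $2$ when both vertices lie in the same closed neighborhood, and at most $1+1+1=3$ when they lie in different ones, the path routing through the edge $uv$.

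To finish, I would invoke the hypothesis $\mathrm{diam}(\Gamma)\ge 4$ to choose vertices $x,y$ with $d_{\Gamma}(x,y)\ge 4$. If both $x$ and $y$ belonged to $N[u]\cup N[v]$, the distance bound above would force $d_{\Gamma}(x,y)\le 3$, contradicting $d_{\Gamma}(x,y)\ge 4$. Hence at least one of $x,y$ lies outside $N[u]\cup N[v]$; taking $w$ to be that vertex produces a vertex adjacent to neither $u$ nor $v$, which establishes property * for the edge $\{u,v\}$. As the edge was arbitrary, $\Gamma$ has property *.

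I expect the only delicate point to be the bookkeeping in the triangle-inequality bound---in particular, being careful that the ``mixed'' case (one vertex in $N[u]$, the other in $N[v]$) genuinely routes through the edge $uv$ to give the bound $3$ and not something larger. Connectedness of $\Gamma$ guarantees that all the distances appearing in the argument are finite, so no degenerate cases arise; everything else is immediate.
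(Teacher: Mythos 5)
Your proof is correct: the observation that failure of property * for an edge $\{u,v\}$ means $N[u]\cup N[v]=V(\Gamma)$, combined with the triangle-inequality bound $d_{\Gamma}(x,y)\le 3$ for any $x,y\in N[u]\cup N[v]$ (the mixed case routing through the edge $uv$ exactly as you describe), immediately contradicts the existence of a pair at distance at least $4$, and the witness $w$ you extract is automatically distinct from $u$ and $v$ since it lies outside both closed neighborhoods. The paper states this lemma only as a citation to Gutman et al.\ without reproducing a proof, and your argument is the standard one underlying that reference (equivalently phrased: if property * fails for some edge then $diam(\Gamma)\le 3$), so your proposal matches the intended approach.
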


\begin{remark}
For graph $\Gamma$ satisfying property *, we have 
\begin{equation*}
    d_{\overline{\Gamma}}(u,v)=
    \begin{cases}
    2 & \mbox{if $u$ is adjacent to $v$ in $\Gamma$}\\
    0 & \mbox{if $u=v$}\\
    1 & \mbox{otherwise.}
    \end{cases}
\end{equation*}
\label{rem12}
\end{remark}

\section{Distance Matrix of Complement of Two Circulant Graph}

In this section, we determine the distance matrix of the complement of family of  circulants (i) and (ii). Note that in order to determine the distance matrix of circulant graphs, it is enough to determine the distance of the 0-vertex to all the other vertices of the graph.  We begin by considering the complement of circulant family (i).  

\begin{theorem}
Let $n=2k$. For $k\geq 2$ we have 
\begin{equation*}
    d_{\overline{C_n(1,k)}}(0,v)=
    \begin{cases}
    0 & \mbox{if $v=0$}\\
    2 & \mbox{if $v\in \{1,k,n-1\}$}\\
    1 & \mbox{otherwise.}
    \end{cases}
\end{equation*}
\label{teodc1}
\end{theorem}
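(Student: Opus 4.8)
The plan is to read the distances in $\overline{\Gamma}$, where $\Gamma=C_n(1,k)$ with $n=2k$, directly off the adjacency structure of $\Gamma$, since in $\overline{\Gamma}$ two distinct vertices are adjacent exactly when they are non-adjacent in $\Gamma$. First I would pin down the neighbors of the vertex $0$ in $\Gamma$. With connection set $\{1,k\}$, the jumps $\pm 1$ send $0$ to $1$ and $n-1$, while the jumps $\pm k$ send $0$ to $k$ only, because $0+k\equiv 0-k \pmod{n}$ precisely when $2k=n$. Hence $N_{\Gamma}(0)=\{1,k,n-1\}$, three distinct vertices for $k\ge 2$, and by definition of the complement the vertex $0$ is non-adjacent in $\overline{\Gamma}$ to exactly $\{0,1,k,n-1\}$ and adjacent to all remaining vertices.

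Two of the three branches are then immediate. For $v=0$ we have $d_{\overline{\Gamma}}(0,v)=0$ by convention, and for $v\notin\{0,1,k,n-1\}$ the vertices $0$ and $v$ are adjacent in $\overline{\Gamma}$, so $d_{\overline{\Gamma}}(0,v)=1$. This disposes of the first and third cases of the stated formula, so the only substance lies in the middle case.

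For $v\in\{1,k,n-1\}$ the vertices $0$ and $v$ are non-adjacent in $\overline{\Gamma}$ and distinct, whence $d_{\overline{\Gamma}}(0,v)\ge 2$; it remains to certify equality by exhibiting a common neighbor of $0$ and $v$ in $\overline{\Gamma}$, equivalently a vertex $w\notin\{0,v\}$ adjacent in $\Gamma$ to neither $0$ nor $v$. I would compute the closed-neighborhood union $N_{\Gamma}[0]\cup N_{\Gamma}[v]$ for each edge-type at $0$, noting that the pairs $(0,1)$ and $(0,n-1)$ are interchanged by the automorphism $x\mapsto -x$, so only $(0,1)$ and $(0,k)$ need treatment. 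For $(0,1)$ one finds $N_{\Gamma}[0]\cup N_{\Gamma}[1]=\{0,1,2,k,k+1,n-1\}$, and for $(0,k)$, using $N_{\Gamma}(k)=\{k-1,k+1,0\}$, one finds $\{0,1,k-1,k,k+1,n-1\}$. A direct check shows $w=n-2$ avoids the first set and $w=2$ avoids the second, giving $d_{\overline{\Gamma}}(0,v)=2$ in every case. This is exactly the statement that $\Gamma$ has Property * of Lemma \ref{lemgut} along the edges at $0$, so as an alternative I could invoke Lemma \ref{lemdiam4} together with Remark \ref{rem12} once $\mathrm{diam}(\Gamma)\ge 4$.

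The main obstacle is the control of small $k$. The witnesses $w=n-2=2k-2$ and $w=2$ only fall outside the forbidden sets when $k$ is large enough: $2k-2$ coincides with $k+1$ at $k=3$ and with $k$ at $k=2$. These boundary values genuinely matter, since $C_4(1,2)=K_4$ (whose complement is edgeless) and $\overline{C_6(1,3)}$ splits into two triangles, so the middle branch and even the connectivity of $\overline{\Gamma}$ can fail; the clean statement really holds for $k\ge 4$, and the diameter route of Lemma \ref{lemdiam4} only activates for $k\ge 7$, where $\mathrm{diam}(C_n(1,k))=\lfloor (k+1)/2\rfloor\ge 4$. I would therefore present a uniform witness construction valid for all large $k$, supplemented by an explicit verification of the remaining cases $k\in\{4,5,6\}$ exactly as above, and would flag that the hypothesis must secure $k$ away from the degenerate values $2$ and $3$.
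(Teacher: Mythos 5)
Your proof is correct for $k\ge 4$, and it takes a genuinely different route from the paper. The paper's proof is a two-case argument: for $4\le k\le 7$ it constructs the graphs $\overline{C_n(1,k)}$ by hand and checks the formula, and for $k>7$ it invokes the diameter formula of Liu et al.\ ($\mathrm{diam}(C_n(1,k))=\lceil k/2\rceil$) to get $\mathrm{diam}(C_n(1,k))\ge 4$, and then applies Lemma \ref{lemdiam4}, Lemma \ref{lemgut} and Remark \ref{rem12}. You instead work directly in the complement: from $N_{C_n(1,k)}(0)=\{1,k,n-1\}$ (using $k\equiv -k \pmod{2k}$) the $v=0$ and ``otherwise'' branches are immediate, and for $v\in\{1,k,n-1\}$ you certify distance exactly $2$ by exhibiting explicit common non-neighbors --- $w=n-2$ for the pair $(0,1)$ and $w=2$ for the pair $(0,k)$, with the automorphism $x\mapsto -x$ covering $(0,n-1)$. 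Your collision analysis ($2k-2$ hits $k+1$ only at $k=3$ and $k$, $2$ only at $k=2$; $2$ hits $k-1$ only at $k=3$, etc.) in fact shows the witnesses work for \emph{every} $k\ge 4$ at once, so your proposed separate verification of $k\in\{4,5,6\}$ is redundant, though harmless. What your route buys is a uniform, self-contained proof with no appeal to the external diameter theorem and no Property * machinery; what the paper's route buys is brevity and reuse of exactly the lemmas it needs anyway for Theorems \ref{teodimatc2} and \ref{teomcd}.

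Your treatment of small $k$ is not a defect of your proposal but a correction to the statement: with the stated hypothesis $k\ge 2$ the theorem is false at $k=2$ and $k=3$, since $C_4(1,2)=K_4$ has edgeless complement and $\overline{C_6(1,3)}$ is the disjoint union of the two triangles $\{0,2,4\}$ and $\{1,3,5\}$, so the distances claimed to equal $2$ are actually infinite. Notably, the paper's own proof covers only $k\ge 4$ (its cases are $4\le k\le 7$ and $k>7$), silently omitting precisely the values at which the statement fails; your flag that the hypothesis must be $k\ge 4$ is the right fix, and it is consistent with how the paper handles the analogous degenerate case $\overline{C_8(1,3)}$ in Theorem \ref{teodimatc2}. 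A minor cosmetic point: as you observe, the diameter route already activates at $k=7$, where $\mathrm{diam}(C_n(1,7))=4$, so the paper's manual case is slightly larger than necessary.
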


\begin{proof}
We prove the theorem by considering two cases. The first case is when $4\leq k\leq 7$. If $4\leq k\leq 7$, we can manually construct the graph $\overline{C_n(1,k)}$ and verify that the result holds.

The second case is when $k>7$. If $k>7$, it follows from Theorem 3.1 in $\cite{Liu}$ (the diameter of $C_n(1,k)$ is $\frac{k}{2}$ if $k$ is even while $\frac{k+1}{2}$ if $k$ is odd) that $diam(C_n(1,k))\geq 4$. Hence, by Lemma \ref{lemdiam4}, Lemma \ref{lemgut} and Remark \ref{rem12}, the result follows.    
\end{proof}

\begin{theorem}
Let $n\geq 8$ and $2\leq a<\frac{n}{2}$. If $v\in \overline{V(C_n(1,a))}$ then \begin{equation*}
    d_{\overline{C_n(1,a)}}(0,v)=
    \begin{cases}
    0 & \mbox{if $v=0$}\\
    2 & \mbox{if $v\in \{1,a,n-a,n-1\}$}\\
    1 & \mbox{otherwise}.
    \end{cases}
\end{equation*} 
With the exception for circulant $\overline{C_8(1,3)}$.
\label{teodimatc2}
\end{theorem}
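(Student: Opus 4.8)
The plan is to reduce everything to a single structural fact: that $C_n(1,a)$ possesses Property * in the sense of Lemma \ref{lemgut}. Once this is known, Lemma \ref{lemgut} tells us $\overline{C_n(1,a)}$ is connected with diameter two, and Remark \ref{rem12} converts adjacency in $C_n(1,a)$ directly into the three distance values. Since the neighbours of $0$ in $C_n(1,a)$ are exactly $1,\,a,\,n-a,\,n-1$ (the jumps $\pm 1,\pm a$), Remark \ref{rem12} yields $d_{\overline{C_n(1,a)}}(0,v)=2$ for $v\in\{1,a,n-a,n-1\}$, the value $1$ for every other $v\neq 0$, and $0$ for $v=0$ --- precisely the asserted formula. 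Thus the whole theorem collapses to checking Property *, and I would first note that the four listed non-neighbours are genuinely distinct under $2\le a<\tfrac{n}{2}$, so that the ``$d=2$'' row is correctly populated.

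To establish Property * I would use vertex-transitivity of the circulant: it suffices, for each edge incident to $0$, to exhibit a vertex $w$ adjacent to neither endpoint. The edges at $0$ are $\{0,1\},\{0,a\},\{0,n-a\},\{0,n-1\}$, and the reflection $x\mapsto -x$ (an automorphism of $C_n(1,a)$) sends the last two to the first two, so only $\{0,1\}$ and $\{0,a\}$ need attention. A direct listing gives $N[0]\cup N[1]\subseteq\{0,1,2,a,n-a,n-1,1+a,1-a\}$ and $N[0]\cup N[a]\subseteq\{0,1,a,n-a,n-1,a-1,a+1,2a\}$, each a set of at most eight elements. Hence whenever $n\ge 9$ some vertex $w$ lies outside the union, Property * holds, and the formula follows for all admissible $a$ with $n\ge 9$.

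It then remains to dispose of $n=8$, where $2\le a<4$ forces $a\in\{2,3\}$. For $a=2$ the two unions above are $\{0,1,2,3,6,7\}$ and $\{0,1,2,3,4,6,7\}$, each still missing a vertex, so $C_8(1,2)$ retains Property * and the formula holds. For $a=3$ the situation degenerates: both jumps are odd, so $C_8(1,3)$ is bipartite on the even/odd classes and is in fact $K_{4,4}$; its complement is $K_4\cup K_4$, which is disconnected, Property * fails on every edge, and the distance formula cannot hold. This is exactly the advertised exception $\overline{C_8(1,3)}$.

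The computations here are light; the only real obstacle is the boundary case. The neighbourhood-union bound of eight is decisive only once $n\ge 9$, so the delicate step is the hand-analysis at $n=8$, where one must verify that $a=2$ survives but $a=3$ collapses to the complete bipartite graph $K_{4,4}$. I would take care to present that single degenerate graph explicitly, since it is the sole reason the clean statement needs an exception.
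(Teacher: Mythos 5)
Your proposal is correct, but it takes a genuinely different route from the paper's proof. The paper never verifies Property * directly: for $8\leq n<26$ it checks the claimed distance values by computing software (noting along the way that $\overline{C_8(1,3)}$ is disconnected), and for $n\geq 26$ it invokes Yebra's lower bound $\lceil(\sqrt{2n-1}-1)/2\rceil\leq diam(C_n(1,a))$ to force $diam(C_n(1,a))\geq 4$, after which Lemma \ref{lemdiam4} supplies Property * and Lemma \ref{lemgut} together with Remark \ref{rem12} finish exactly as in your argument. You instead establish Property * directly: vertex-transitivity plus the reflection $x\mapsto -x$ reduce all edge orbits to $\{0,1\}$ and $\{0,a\}$, and the bounds $|N[0]\cup N[1]|\leq 8$ and $|N[0]\cup N[a]|\leq 8$ produce the required common non-neighbor for every $n\geq 9$ --- precisely the range $9\leq n<26$ where the paper's diameter bound is silent and it must fall back on machine verification. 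Your hand analysis at $n=8$ is also sharper than the paper's: checking that the two unions still miss a vertex shows $C_8(1,2)$ retains Property *, while identifying $C_8(1,3)\cong K_{4,4}$, whose complement is $2K_4$, explains structurally why the exception occurs rather than merely asserting disconnectedness. What each approach buys: the paper's route recycles a known diameter bound and avoids any edge-orbit bookkeeping, but leaves a finite computational gap and gives no insight into the exceptional case; your counting argument is uniform for all $n\geq 9$, fully self-contained, and machine-free, at the modest cost of the closed-neighborhood case analysis. Both proofs converge on the same final step, converting Property * into the stated distance formula via Lemma \ref{lemgut} and Remark \ref{rem12}, using that the neighbors of $0$ in $C_n(1,a)$ are exactly $1$, $a$, $n-a$, $n-1$, which your distinctness check under $2\leq a<\frac{n}{2}$ justifies.
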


\begin{proof}
Here we also consider two cases. The first case is when $8\leq n<26$. If $8\leq n<26$, using a computing software, we verified that the result holds except for the complement of $C_8(1,3)$ for $\overline{C_8(1,3)}$ is disconnected. 

For $n\geq 26$, we denote by $\delta(n)=\mbox{min}\{diam(C_n(a,b)):1\leq a<\frac{n}{2},a\neq b\}$. Note that $\delta(n)\leq diam(C_n(1,a))$. Using the Yebra \cite{Yebra} bound for $\delta(n)$, we have
\begin{equation*}
    \left\lceil\frac{\sqrt{2n-1}-1}{2}\right\rceil\leq \delta(n)\leq diam(C_n(1,a)).
\end{equation*}
Note that the expression $\left\lceil\frac{\sqrt{2n-1}-1}{2}\right\rceil$ increases as $n$ increases. So it is enough for us to find the minimum value of $n$ such that $\left\lceil\frac{\sqrt{2n-1}-1}{2}\right\rceil=4$. The solution of the last stated equation is $n=26$. Thus, for $n\geq 26$, we have $diam(C_n(1,a))\geq 4$. Using Lemma \ref{lemdiam4}, Lemma \ref{lemgut} and Remark \ref{rem12}, the result follows. 
\end{proof}

\begin{remark}
If $n=7$ for the family of circulants that was considered in Theorem \ref{teodimatc2}, we have
\begin{equation*}
    d_{\overline{C_7(1,2)}}(0,v)=
    \begin{cases}
    0 & \mbox{if $v=0$}\\
    1 & \mbox{if $v\in \{3,4\}$}\\
    2 & \mbox{if $v\in \{1,6\}$}\\
    3 & \mbox{if $v\in \{2,5\}$},
    \end{cases}
\end{equation*}

\begin{equation*}
    d_{\overline{C_7(1,3)}}(0,v)=
    \begin{cases}
    0 & \mbox{if $v=0$}\\
    1 & \mbox{if $v\in \{2,5\}$}\\
    2 & \mbox{if $v\in \{3,4\}$}\\
    3 & \mbox{if $v\in \{1,6\}$}.
    \end{cases}
\end{equation*}
\label{remdc1}
\end{remark}

Next, we consider the distance matrix of multiplicative circulant graph on $m^h$ vertices. For simplicity, we denote by $\Gamma_{m^h}$ the multiplicative circulant graph $C_{m^h}(1,m,m^2,\ldots,m^{h-1})$.

\begin{theorem}
Let $v\in V(\overline{\Gamma_{m^h}})$. For $m\geq 5$ we have
\begin{equation*}
    d_{\overline{\Gamma_{m^h}}}(0,v)=
    \begin{cases}
    0 & \mbox{if $v=0$}\\
    2 & \mbox{if $v\in\{1,m,m^2,\ldots,m^{h-1},n-m^{h-1},\ldots,n-1\}$}\\
    1 & \mbox{otherwise}.
    \end{cases}
\end{equation*}
\label{teomcd}
\end{theorem}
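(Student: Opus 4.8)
The plan is to follow the same route as the proofs of Theorems \ref{teodc1} and \ref{teodimatc2}: show that $\Gamma_{m^h}$ has property *, so that by Lemma \ref{lemgut} its complement is connected with diameter two, and then read off the distances from Remark \ref{rem12}. By Remark \ref{rem12}, $d_{\overline{\Gamma_{m^h}}}(0,v)$ equals $0$ when $v=0$, equals $2$ exactly when $v$ is a neighbour of $0$ in $\Gamma_{m^h}$, and equals $1$ otherwise. Since $\Gamma_{m^h}=C_{m^h}(S)$ with $S=\{1,m,\dots,m^{h-1}\}$, the neighbours of $0$ are precisely
\[
\{m^0,m^1,\dots,m^{h-1}\}\cup\{n-m^{h-1},\dots,n-m^0\},
\]
which is exactly the distance-$2$ set in the statement; so once property * is established the formula is immediate. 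I would also record that for $m\ge 5$ no generator $m^i$ equals $n/2$ (this would force $m^{h-i}=2$) and that the two lists above are disjoint (since $m^i+m^j\le 2m^{h-1}<m^h$), so $0$ has exactly $2h$ distinct neighbours and the distance-$2$ set is correctly described.

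By Lemma \ref{lemdiam4}, property * holds as soon as $diam(\Gamma_{m^h})\ge 4$, so the heart of the argument is a diameter bound. For this I would use the description of distance in a multiplicative circulant via signed base-$m$ digits: a walk from $0$ to $v$ is a sequence of steps $\pm m^i$, whence
\[
d_{\Gamma_{m^h}}(0,v)=\min\left\{\sum_{i=0}^{h-1}|c_i|\ :\ \sum_{i=0}^{h-1} c_i m^i\equiv v \pmod{m^h}\right\},
\]
the $c_i$ ranging over all integers. Using the balanced base-$m$ representation (digits in $\{-\lfloor m/2\rfloor,\dots,\lfloor m/2\rfloor\}$) one obtains $diam(\Gamma_{m^h})=h\lfloor m/2\rfloor$, the maximum being attained at the vertex all of whose balanced digits equal $\lfloor m/2\rfloor$. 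Hence for $m\ge 5$ and $h\ge 2$ we have $diam(\Gamma_{m^h})=h\lfloor m/2\rfloor\ge 2\cdot 2=4$, and the conclusion follows from Lemmas \ref{lemdiam4} and \ref{lemgut} together with Remark \ref{rem12}.

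It then remains to dispose of the cases in which $h\lfloor m/2\rfloor<4$. For $m\ge 5$ these occur only for $h=1$ with $m\in\{5,6,7\}$, where $\Gamma_{m^h}$ is the cycle $C_m$ and the claimed distance-$2$ set reduces to $\{1,m-1\}$. Exactly as in the first cases of Theorems \ref{teodc1} and \ref{teodimatc2}, I would verify the three complements $\overline{C_5},\overline{C_6},\overline{C_7}$ directly, checking that each has diameter two and that $0$ shares a common neighbour with each of $1$ and $m-1$. (If the intended convention is $h\ge 2$, this paragraph is vacuous and the previous one completes the proof.)

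The step I expect to be the main obstacle is the lower bound in the diameter computation: exhibiting a vertex and certifying that \emph{no} short signed representation of it exists, since carries between digit positions can a priori shorten a representation. Rather than establish the full equality $diam=h\lfloor m/2\rfloor$, it suffices to produce one vertex at distance at least $4$; for $h\ge 2$ the vertex $v=2+2m$ is a natural candidate. One shows $d_{\Gamma_{m^h}}(0,v)\ge 4$ by a short case analysis: reducing modulo $m$ forces $c_0\equiv 2\pmod m$, and with total weight $\sum_i|c_i|\le 3$ and $m\ge 5$ this leaves no admissible completion, ruling out every representation of weight at most $3$. That verification is where the genuine work lies, and I would isolate it as a short preliminary claim before invoking Lemmas \ref{lemdiam4} and \ref{lemgut}.
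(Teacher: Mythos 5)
Your proof is correct, but your key step differs from the paper's. The paper does not derive the diameter bound itself: it cites Theorem 4 of Tang et al.\ \cite{Tang} for the closed-form diameter of $\Gamma_{m^h}$, splits into the cases $5\leq m\leq 8$ (where $h\geq 2$ gives $diam\geq 4$, and the $h=1$ graphs $\Gamma_5,\ldots,\Gamma_8$ are checked by hand) and $m\geq 8$ (where $diam\geq 4$ already for $h\geq 1$), and then invokes Lemmas \ref{lemdiam4} and \ref{lemgut} and Remark \ref{rem12} exactly as you do. You instead make the diameter step self-contained: the signed-digit description of distance in $\Gamma_{m^h}$ is correct, and your certificate vertex $v=2+2m$ works --- reducing mod $m$ forces $c_0\in\{2\}$ (or $c_0=-3$ when $m=5$), and with residual weight at most $1$ no term $\pm m^i$ can be congruent to $2m$ modulo $m^h$, so $d_{\Gamma_{m^h}}(0,v)\geq 4$ for all $m\geq 5$, $h\geq 2$; together with $diam(C_m)=\lfloor m/2\rfloor\geq 4$ for $h=1$, $m\geq 8$, and the direct check of $\overline{C_5},\overline{C_6},\overline{C_7}$, this covers every case the theorem asserts (the paper's proof, note, does include $h=1$, so your third paragraph is not vacuous). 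Your approach buys independence from the external reference and, via the explicit identification of the neighbourhood of $0$ (the $2h$ distinct vertices $\{m^i\}\cup\{n-m^i\}$, none equal to $n/2$), makes precise a step the paper leaves implicit; the paper's route is shorter at the cost of outsourcing the diameter computation. One caution: your asserted equality $diam(\Gamma_{m^h})=h\lfloor m/2\rfloor$ is false for even $m$ --- e.g.\ for $m=6$, $h=2$ the formula in \cite{Tang} (as quoted in the paper) gives $h(m-1)/2=5$, not $6$, because wrap-around modulo $m^h$ can beat the balanced-digit representation --- but since you explicitly replace the equality by the weight-$\leq 3$ exclusion argument, this mis-statement is not load-bearing and the proof stands; you should simply delete the equality claim or weaken it to the upper bound $diam(\Gamma_{m^h})\leq h\lfloor m/2\rfloor$.
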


\begin{proof}
To prove the theorem, we use a result of Tang et al. in \cite{Tang}. Using Theorem 4 in \cite{Tang}, one can verify that 
\begin{equation*}
    diam(\Gamma_{m^h})=
    \begin{cases}
    \frac{h(m-1)+1}{2} & \mbox{if $m$ is even and $h$ is odd}\\
    h\left(\frac{m-1}{2}\right) & \mbox{otherwise}.
    \end{cases}
\end{equation*}
Now, we consider two cases. The first case is when $5\leq m\leq 8$. If $5\leq m\leq 8$, using the diameter formula above reveals that for $h\geq 2$, we have $diam(\Gamma_{m^h})\geq 4$. By Lemma \ref{lemdiam4}, Lemma \ref{lemgut} and Remark \ref{rem12}, the result follows. For circulants $\overline{\Gamma_5}$, $\overline{\Gamma_6}$, $\overline{\Gamma_7}$, and $\overline{\Gamma_8}$, we manually calculated the distance matrix and verified that the result holds. Hence, the result holds for $\overline{\Gamma_{m^h}}$ for $5\leq m\leq 8$. 

The second case is when $m\geq 8$. If $m\geq 8$, using the diameter formula above reveals that for $h\geq 1$, $diam(\Gamma_{m^h})\geq 4$. By Lemma \ref{lemdiam4}, Lemma \ref{lemgut} and Remark \ref{rem12}, the result follows.
\end{proof}

\begin{remark}
The multiplicative circulant graphs $\overline{\Gamma_{2^4}}$, $\overline{\Gamma_{2^5}}$, $\overline{\Gamma_{2^6}}$, $\overline{\Gamma_{3^2}}$, $\overline{\Gamma_{3^3}}$, and $\overline{\Gamma_{4^2}}$ also satisfies the result in Theorem \ref{teomcd}.

For $\overline{\Gamma_{2^3}}$, we have 
\begin{equation*}
    d_{\overline{\Gamma_{2^3}}}(0,v)=
    \begin{cases}
    0 & \mbox{if $v=0$}\\
    1 & \mbox{if $v\in \{3,5\}$}\\
    2 & \mbox{if $v\in \{2,6\}$}\\
    3 & \mbox{if $v\in \{1,7\}$}\\
    4 & \mbox{if $v=4$}.
    \end{cases}
\end{equation*}
\label{remmc23}
\end{remark}

\section{Forwarding Indices and Some Distance-based Topological Indices of $\overline{C_n(1,a)}$}

In this section, we state some of the consequences of Theorem \ref{teodc1}, Theorem \ref{teodimatc2}, and Remark \ref{remdc1}. We begin by considering the distance spectral radius of circulant $\overline{C_n(1,\frac{n}{2})}$. The result follows from the definition of distance spectral radius and Theorem \ref{teodc1}.

\begin{theorem}
Let $n=2k$. For $k\geq 2$ we have
\begin{equation*}
    \rho(\overline{C_n(1,k)})=n+2.
\end{equation*}
\label{teorhoc1}
\end{theorem}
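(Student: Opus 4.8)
The plan is to exploit two structural facts already assembled in the preliminaries. First, $\overline{C_n(1,k)}$ is a circulant graph (Corollary \ref{cordmatccirc}), so its distance matrix is circulant. Second, I would invoke the result of Lin et al. that for any circulant graph $\Gamma$ the distance spectral radius equals the transmission of an arbitrary vertex, that is $\rho(\Gamma)=Tr_{\Gamma}(v)$. Together these reduce the computation of $\rho(\overline{C_n(1,k)})$ to evaluating a single row sum of the distance matrix, and Theorem \ref{teodc1} has already described that row explicitly. Note also that the finiteness of all distances in Theorem \ref{teodc1} guarantees $\overline{C_n(1,k)}$ is connected, so the transmission is well defined.

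Concretely, I would take the vertex $v=0$ and read off its distances directly from Theorem \ref{teodc1}. Excluding $0$ itself, exactly the three vertices $1$, $k$, and $n-1$ lie at distance $2$, while each of the remaining $n-4$ vertices lies at distance $1$. Summing these contributions,
\begin{equation*}
Tr_{\overline{C_n(1,k)}}(0)=2\cdot 3 + 1\cdot(n-4)=n+2,
\end{equation*}
and then applying $\rho(\overline{C_n(1,k)})=Tr_{\overline{C_n(1,k)}}(0)$ yields the claimed value $n+2$.

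The only point requiring a moment's care is the bookkeeping behind the count $n-4$: it presupposes that the three special vertices $1$, $k$, and $n-1$ are pairwise distinct, so that they contribute exactly three entries equal to $2$. Since $n=2k$, this follows from $k\geq 2$, because then $1\neq k$, $1\neq n-1$, and $k=\tfrac{n}{2}\neq n-1$; I would record this verification explicitly so the count is unambiguous. With distinctness in hand there is no genuine obstacle, as the argument is a direct row-sum evaluation. The substantive content therefore lives entirely in Theorem \ref{teodc1} and in the transmission identity, making this result essentially an immediate corollary of the distance-matrix description.
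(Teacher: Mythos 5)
Your proposal is correct and follows essentially the same route as the paper, which likewise derives the value from Theorem \ref{teodc1} together with the fact that for a circulant (hence transmission-regular) graph the distance spectral radius equals the vertex transmission, as in the cited lemma of Lin et al. You merely make explicit the row-sum bookkeeping, $2\cdot 3+1\cdot(n-4)=n+2$, and the distinctness of $1$, $k$, $n-1$, which the paper leaves implicit.
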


Another consequence of Theorem \ref{teodc1} talks about the reverse transmission of a vertex in $\overline{C_n(1,\frac{n}{2})}$.

\begin{theorem}
Let $n=2k$ where $k\geq 2$, and let $v\in V(\overline{C_n(1,k)})$. Then 
\begin{equation*}
    rs_{\overline{C_n(1,k)}}(v)=\frac{2n-5}{2}.
\end{equation*}
\label{teorsc1}
\end{theorem}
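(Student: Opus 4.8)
The plan is to exploit the circulant structure to collapse the computation onto a single vertex, then read off the distance distribution from Theorem \ref{teodc1} and sum the reciprocals. First I would invoke Corollary \ref{cordmatccirc}: since $\textbf{D}(\overline{C_n(1,k)})$ is circulant, every row is a cyclic shift of every other, so the row entries form the same multiset for each vertex. Consequently the reciprocal transmission $rs_{\overline{C_n(1,k)}}(v)$ does not depend on $v$, and it suffices to compute $rs_{\overline{C_n(1,k)}}(0)$, i.e. the sum of the reciprocals of the nonzero entries of the row indexed by the $0$-vertex.

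Next I would tally how many of the $n-1$ nonzero vertices lie at each distance from $0$ using Theorem \ref{teodc1}. Before counting, the single point that needs care is verifying that the three vertices listed as being at distance $2$ — namely $1$, $k$, and $n-1=2k-1$ — are pairwise distinct; otherwise the tally would be wrong. For $k\geq 2$ one checks directly that each possible coincidence among $1$, $k$, $2k-1$ forces $k=1$, a contradiction. Hence exactly three vertices sit at distance $2$, and the remaining $(n-1)-3 = n-4$ vertices sit at distance $1$.

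Finally I would sum the reciprocals of the distances:
$$rs_{\overline{C_n(1,k)}}(0) = 3\cdot\frac{1}{2} + (n-4)\cdot\frac{1}{1} = \frac{3}{2} + (n-4) = \frac{2n-5}{2},$$
and by the vertex-independence established in the first step this common value equals $rs_{\overline{C_n(1,k)}}(v)$ for every $v\in V(\overline{C_n(1,k)})$, which is the claim.

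As for the main obstacle, there is genuinely no deep step: the argument reduces to one short count followed by elementary arithmetic. The only thing one must not overlook is the distinctness of the three distance-$2$ neighbours, since that is precisely what pins down the constant $3$ (and hence the term $\tfrac{3}{2}$) in the final expression; it is also the place where the hypothesis $k\geq 2$ is implicitly needed.
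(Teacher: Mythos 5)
Your proof is correct and follows precisely the route the paper intends: the paper presents Theorem \ref{teorsc1} as an immediate consequence of the definition of reciprocal transmission and Theorem \ref{teodc1}, and your tally of three vertices at distance $2$ and $n-4$ vertices at distance $1$, summing to $\tfrac{3}{2}+(n-4)=\tfrac{2n-5}{2}$, is exactly that computation. Your two extra verifications---vertex-independence via the circulant distance matrix (Corollary \ref{cordmatccirc}) and the pairwise distinctness of $1$, $k$, $n-1$ for $k\geq 2$---simply make explicit details the paper leaves implicit.
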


For the vertex-forwarding index and bounds for the edge-forwarding index of circulant $\overline{C_n(1,\frac{n}{2})}$, they can be computed by combining Theorem \ref{teodc1} with Lemma \ref{lxi} and Lemma \ref{lpi}. The results are presented in the next two corollaries.

\begin{corollary}
Let $n=2k$ where $k\geq 2$. Then 
\begin{equation*}
    \xi(\overline{C_n(1,k)})=3.
\end{equation*}
\end{corollary}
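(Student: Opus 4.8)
The plan is to invoke Lemma~\ref{lxi}, which expresses the vertex-forwarding index of a connected circulant graph entirely in terms of its distance spectral radius and its order. Since the genuine mathematical work (the distance matrix and the spectral radius) has already been done in Theorem~\ref{teodc1} and Theorem~\ref{teorhoc1}, the remaining argument is essentially a verification of hypotheses followed by a one-line substitution.

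First I would confirm that $\overline{C_n(1,k)}$ satisfies the hypotheses of Lemma~\ref{lxi}, namely that it is a connected circulant graph of order $n$. The circulant property is immediate from Lemma~\ref{lemmei}, since the complement of a circulant graph is circulant. Connectedness follows directly from Theorem~\ref{teodc1}: that theorem shows every distance $d_{\overline{C_n(1,k)}}(0,v)$ equals $0$, $1$, or $2$, so in particular all distances are finite and $\overline{C_n(1,k)}$ is connected (indeed of diameter two). Thus both hypotheses of Lemma~\ref{lxi} hold for every $k\geq 2$.

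Next I would substitute the known value of the distance spectral radius. By Theorem~\ref{teorhoc1} we have $\rho(\overline{C_n(1,k)})=n+2$ for $k\geq 2$, and Lemma~\ref{lxi} then yields
\begin{equation*}
\xi(\overline{C_n(1,k)})=\rho(\overline{C_n(1,k)})-(n-1)=(n+2)-(n-1)=3,
\end{equation*}
which is precisely the asserted value.

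This argument has no serious obstacle; the only point deserving attention is ensuring that the connectedness and circulant-graph hypotheses of Lemma~\ref{lxi} are genuinely met for the full range $k\geq 2$, which is exactly what Theorem~\ref{teodc1} (finiteness of all distances) and Lemma~\ref{lemmei} (circulant complement) supply. Everything else reduces to the trivial arithmetic simplification $(n+2)-(n-1)=3$.
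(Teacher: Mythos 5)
Your proposal is correct and follows the paper's route exactly: the paper obtains this corollary by combining Theorem~\ref{teodc1} (equivalently, the spectral radius $\rho(\overline{C_n(1,k)})=n+2$ of Theorem~\ref{teorhoc1}) with Lemma~\ref{lxi}, giving $\xi=(n+2)-(n-1)=3$ just as you compute. Your explicit check of the hypotheses of Lemma~\ref{lxi} (circulant via Lemma~\ref{lemmei}, connectedness via the finiteness of all distances in Theorem~\ref{teodc1}) is a detail the paper leaves implicit, but it is the same argument.
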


\begin{corollary}
Let $n=2k$ where $k\geq 2$. Then 
\begin{equation*}
    \frac{2(n+2)}{n-4}\leq \pi(\overline{C_n(1,k)})\leq 11.
\end{equation*}
\end{corollary}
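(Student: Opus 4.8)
The plan is to apply Lemma \ref{lpi}, which bounds the edge-forwarding index of a connected $r$-regular circulant graph of order $n$ in terms of its distance spectral radius $\rho$. By Corollary \ref{cordmatccirc} the graph $\overline{C_n(1,k)}$ is circulant, and Theorem \ref{teodc1} shows it has diameter two and hence is connected, so the hypotheses of Lemma \ref{lpi} are met (for $k$ large enough). The distance spectral radius is already supplied by Theorem \ref{teorhoc1}, namely $\rho(\overline{C_n(1,k)}) = n+2$. The only new quantity I need is therefore the common degree $r$ of $\overline{C_n(1,k)}$.

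The key step is computing $r$. For $C_n(1,k)$ with connection set $S=\{1,k\}$ and $n=2k$, we have $|S|=2$ and $\tfrac{n}{2}=k\in S$, so Lemma 2.1 gives $deg_{C_n(1,k)}(v)=2|S|-1=3$ for every vertex $v$. Passing to the complement, each vertex is adjacent to all of the remaining $n-1$ vertices except its $3$ neighbors in $C_n(1,k)$, so $\overline{C_n(1,k)}$ is $r$-regular with $r=(n-1)-3=n-4$.

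With $\rho=n+2$ and $r=n-4$ in hand, I would substitute directly into Lemma \ref{lpi}. The lower bound becomes $\tfrac{2\rho}{r}=\tfrac{2(n+2)}{n-4}$, matching the claim. For the upper bound, $n+\rho-(2r-1)=n+(n+2)-\bigl(2(n-4)-1\bigr)$, and the terms linear in $n$ cancel to leave $11$. This yields the two stated inequalities.

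The main point requiring care is not a deep obstacle but the validity of the hypotheses at small $k$. When $k=2$ we have $n=4$ and $r=n-4=0$: indeed $C_4(1,2)=K_4$, whose complement is edgeless and disconnected, and the lower bound $\tfrac{2(n+2)}{n-4}$ is undefined. Thus the argument genuinely needs $n>4$, i.e. $k\geq 3$, so that $\overline{C_n(1,k)}$ is connected and of positive regularity, which is exactly the range in which Theorem \ref{teodc1} and Lemma \ref{lpi} apply; confirming connectedness and $r$-regularity on this range is the one thing to verify before the substitution goes through.
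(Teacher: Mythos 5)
Your proposal is correct and follows essentially the same route as the paper: apply Lemma \ref{lpi} with $\rho(\overline{C_n(1,k)})=n+2$ from Theorem \ref{teorhoc1} and regularity $r=n-4$ (via Lemma 2.1, since $\tfrac{n}{2}=k\in S$ gives degree $3$ in $C_n(1,k)$), so that $\tfrac{2\rho}{r}=\tfrac{2(n+2)}{n-4}$ and $n+\rho-(2r-1)=2n+2-(2n-9)=11$. One correction to your boundary analysis: $k\geq 3$ is not enough, since $C_6(1,3)\cong K_{3,3}$ and $\overline{C_6(1,3)}$ is two disjoint triangles, hence disconnected; the argument really requires $k\geq 4$, which is exactly the range covered by the case analysis in the paper's proof of Theorem \ref{teodc1} (the paper's stated hypothesis $k\geq 2$ suffers from the same defect you spotted at $k=2$).
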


The next series of results give the exact value of some distance-based topological indices of circulants $\overline{C_n(1,\frac{n}{2})}$. The results follow from the definition of the topological indices combined with Theorem \ref{teodc1}, Theorem \ref{teorhoc1}, Theorem \ref{teorsc1} and the fact that $\overline{C_n(1,\frac{n}{2})}$ is a vertex-regular graph with vertex-regularity $n-4$.  

\begin{corollary}
Let $\overline{\Gamma}=\overline{C_n(1,\frac{n}{2})}$. Then
\begin{multicols}{2}
\begin{enumerate}[(i)]
    \item $W(\overline{\Gamma})=\frac{n(n+2)}{2}$
    \item $S(\overline{\Gamma})=n(n-4)(n+2)$
    \item $G(\overline{\Gamma})=\frac{n(n+2)(n-4)^2}{2}$
    \item $WW(\overline{\Gamma})=\frac{n(n+5)}{2}$
    \item $H(\overline{\Gamma})=\frac{n(2n-5)}{4}$
    \item $H_A(\overline{\Gamma})=\frac{n(n-4)(2n-5)}{2}$
    \item $H_M(\overline{\Gamma})=\frac{n(2n-5)(n-4)^2}{4}$
    \item $T_{AG}(\overline{\Gamma})=\frac{n(n-4)}{2}$
    \item $T_{GA}(\overline{\Gamma})=\frac{n(n-4)}{2}$
    \item $T_{SC}(\overline{\Gamma})=\frac{n(n-4)}{2\sqrt{2}\sqrt{n+2}}$
    \item $T_{ABC}(\overline{\Gamma})=\frac{n(n-4)\sqrt{n+1}}{\sqrt{2}(n+2)}$
    \item $T_{AZ}(\overline{\Gamma})=\frac{n(n-4)(n+2)^6}{16(n+1)^3}$
    \item $RT_{AG}(\overline{\Gamma})=\frac{n(n-4)}{2}$
    \item $RT_{GA}(\overline{\Gamma})=\frac{n(n-4)}{2}$
    \item $RT_{SC}(\overline{\Gamma})=\frac{n(n-4)}{2\sqrt{2n-5}}$
    \item $RT_{ABC}(\overline{\Gamma})=\frac{n(n-4)\sqrt{2n-7}}{2n-5}$
    \item $RT_{AZ}(\overline{\Gamma})=\frac{n(n-4)(2n-5)^6}{128(2n-7)^3}$.
\end{enumerate}
\end{multicols}
\end{corollary}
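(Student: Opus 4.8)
The plan is to exploit four uniformity properties of $\overline{\Gamma}=\overline{C_n(1,\tfrac{n}{2})}$ that collapse every sum in Tables \ref{tabdis}--\ref{tabrtrans} to an elementary count. By Theorem \ref{teodc1}, every pair of distinct vertices is at distance either $1$ or $2$, and from the $0$-vertex exactly three vertices (namely $1,k,n-1$) lie at distance $2$ while the remaining $n-4$ lie at distance $1$; by vertex-transitivity this distribution is the same at every vertex. I would record at the outset the relevant constants: $\overline{\Gamma}$ is $(n-4)$-regular, so $\deg_{\overline{\Gamma}}(v_i)+\deg_{\overline{\Gamma}}(v_j)=2(n-4)$ and $\deg_{\overline{\Gamma}}(v_i)\deg_{\overline{\Gamma}}(v_j)=(n-4)^2$ for every pair; it is transmission-regular with $Tr(v)=\rho(\overline{\Gamma})=n+2$ (Theorem \ref{teorhoc1}); and it is reciprocal-transmission-regular with $rs(v)=\tfrac{2n-5}{2}$ (Theorem \ref{teorsc1}). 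Counting adjacencies then gives $|E(\overline{\Gamma})|=\tfrac{n(n-4)}{2}$, which is exactly the number of distance-$1$ pairs, leaving $\tfrac{3n}{2}$ distance-$2$ pairs.

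For the purely distance-based indices, Wiener and Harary are immediate from the two regularity statements via $W(\overline{\Gamma})=\tfrac12\sum_v Tr(v)$ and $H(\overline{\Gamma})=\tfrac12\sum_v rs(v)$, yielding $\tfrac{n(n+2)}{2}$ and $\tfrac{n(2n-5)}{4}$; the Wiener value may alternatively be read off from Lemma \ref{lemgut} as $\binom{n}{2}+m$ with $m=\tfrac{3n}{2}$. The hyper-Wiener index is the only one that genuinely uses both distance values: since $d+d^2$ equals $2$ on a distance-$1$ pair and $6$ on a distance-$2$ pair, I would split the sum over the $\tfrac{n(n-4)}{2}$ and $\tfrac{3n}{2}$ pairs counted above. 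For the four distance-degree indices (Schultz, Gutman, additively- and multiplicatively-weighted Harary), $(n-4)$-regularity lets me pull the degree factor out of each sum as the constant $2(n-4)$ or $(n-4)^2$, reducing $S,G$ to multiples of $W$ and $H_A,H_M$ to multiples of $H$.

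For Tables \ref{tabtrans} and \ref{tabrtrans} the decisive point is that each summand depends only on $\sigma(v_i),\sigma(v_j)$ (respectively $rs(v_i),rs(v_j)$), and these are constant over all vertices. Substituting the constant transmission $\sigma(v)=Tr(v)=n+2$ makes every edge contribute the same fixed value, so each index equals that value times $|E(\overline{\Gamma})|=\tfrac{n(n-4)}{2}$: for instance $T_{GA}$ and $T_{AG}$ have constant summand $1$, hence both equal $\tfrac{n(n-4)}{2}$, while $T_{SC},T_{ABC},T_{AZ}$ become $\tfrac{n(n-4)}{2}$ times $\tfrac{1}{\sqrt{2(n+2)}}$, $\tfrac{\sqrt{2(n+1)}}{n+2}$, and $\bigl(\tfrac{(n+2)^2}{2(n+1)}\bigr)^3$ respectively. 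The reciprocal-transmission indices are handled identically with $rs\equiv\tfrac{2n-5}{2}$, so that $rs(v_i)+rs(v_j)=2n-5$, $rs(v_i)+rs(v_j)-2=2n-7$, and $rs(v_i)rs(v_j)=\tfrac{(2n-5)^2}{4}$, which reproduces the stated closed forms.

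There is no deep obstacle: once the three regularities are in hand, every index is essentially a one-line substitution. The only genuine care is bookkeeping. I would double-check the edge count $\tfrac{n(n-4)}{2}$ and the complementary distance-$2$ count $\tfrac{3n}{2}$, since these drive both the hyper-Wiener split and every value in Tables \ref{tabtrans}--\ref{tabrtrans}, and I would verify each simplification of the surd and cube expressions, as those are the steps most prone to algebraic slips.
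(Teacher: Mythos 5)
Your proposal is correct and follows essentially the same route as the paper, which likewise derives all seventeen values by substituting the constants from Theorem \ref{teodc1}, the transmission $n+2$ (Theorem \ref{teorhoc1}), the reciprocal transmission $\frac{2n-5}{2}$ (Theorem \ref{teorsc1}), and the $(n-4)$-regularity into the defining sums. Your explicit bookkeeping of the edge count $\frac{n(n-4)}{2}$ and the $\frac{3n}{2}$ distance-$2$ pairs simply fills in the computation the paper leaves as a one-line remark, and all your intermediate constants check out.
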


Now, we consider the distance spectral radius of circulants $\overline{C_7(1,2)}$ and $\overline{C_7(1,3)}$ . The result follows from the definition of distance spectral radius and Remark \ref{remdc1}.

\begin{theorem}
Let $\overline{\Gamma_1}=\overline{C_7(1,2)}$ and $\overline{\Gamma_2}=\overline{C_7(1,3)}$. Then
\begin{equation*}
    \rho(\overline{\Gamma_1})=\rho(\overline{\Gamma_2})=12.
\end{equation*}
\label{teorhoc2a}
\end{theorem}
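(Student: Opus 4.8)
The plan is to reduce the computation of the distance spectral radius to a single transmission sum, exploiting the transmission-regularity of these complements. First I would observe that $\overline{C_7(1,2)}$ and $\overline{C_7(1,3)}$ are complements of circulant graphs, so by Corollary \ref{cordmatccirc} their distance matrices $\textbf{D}(\overline{\Gamma_1})$ and $\textbf{D}(\overline{\Gamma_2})$ are circulant. Consequently both graphs are transmission-regular, and the Lin et al. lemma stating $\rho(\Gamma)=Tr_{\Gamma}(v)$ applies to each. This turns the eigenvalue problem into the arithmetic of adding the entries of a single row of the distance matrix.

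Next I would read off the row indexed by the $0$-vertex directly from Remark \ref{remdc1}. For $\overline{\Gamma_1}=\overline{C_7(1,2)}$ the distances $d(0,v)$ for $v=1,\ldots,6$ are $2,3,1,1,3,2$, and for $\overline{\Gamma_2}=\overline{C_7(1,3)}$ they are $3,1,2,2,1,3$. Summing each list (the $d(0,0)=0$ term contributes nothing) yields $Tr_{\overline{\Gamma_1}}(0)=Tr_{\overline{\Gamma_2}}(0)=12$, whence $\rho(\overline{\Gamma_1})=\rho(\overline{\Gamma_2})=12$.

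There is no substantive obstacle here: once transmission-regularity is invoked the argument is a two-line addition, and the coincidence of the two spectral radii is transparent once one notes that the two distance rows are permutations of one another, each containing two $1$'s, two $2$'s, and two $3$'s. The only point deserving care is to confirm that the transmission-regularity hypothesis genuinely holds, since these two graphs have diameter $3$ and therefore fall outside the Property~* regime used in Theorem \ref{teodimatc2}. However, transmission-regularity follows purely from the circulant structure of the distance matrix supplied by Corollary \ref{cordmatccirc}, independently of the diameter, so the identity $\rho=Tr$ remains available and the proof goes through verbatim.
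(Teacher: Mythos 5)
Your proposal is correct and follows essentially the same route as the paper, which derives the result from Remark \ref{remdc1} together with the Lin et al.\ identity $\rho(\Gamma)=Tr_{\Gamma}(v)$ for circulant graphs; your row sums $2+3+1+1+3+2=3+1+2+2+1+3=12$ match the paper's intended computation. Your added observation that transmission-regularity comes from the circulant structure (Corollary \ref{cordmatccirc}) rather than from the Property~* machinery is a worthwhile clarification but not a different method.
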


Another consequence of Remark \ref{remdc1} talks about the reverse transmission of a vertex in $\overline{C_7(1,2)}$ and $\overline{C_7(1,3)}$.

\begin{theorem}
Let $\overline{\Gamma_1}=\overline{C_7(1,2)}$ and $\overline{\Gamma_2}=\overline{C_7(1,3)}$. Then  
\begin{equation*}
    rs_{\overline{\Gamma_1}}(v)=rs_{\overline{\Gamma_2}}(v)=\frac{11}{3}.
\end{equation*}
\label{teorsc2a}
\end{theorem}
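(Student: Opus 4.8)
The plan is to exploit the circulant structure to collapse the computation onto a single vertex, and then simply read the distances off Remark \ref{remdc1} and sum their reciprocals. First I would observe that, by Corollary \ref{cordmatccirc}, the distance matrix $\textbf{D}(\overline{\Gamma_i})$ is circulant, so every row is a cyclic shift of the row indexed by the $0$-vertex. Consequently, for each $i$ the multiset of distances from an arbitrary vertex $v$ to the remaining vertices equals the multiset of distances from $0$ to the remaining vertices. Since the reciprocal transmission $rs_{\overline{\Gamma_i}}(v)$ is, by definition, the sum of the reciprocals of the entries in the row indexed by $v$, it depends only on this multiset and is therefore independent of $v$. Thus it suffices to evaluate $rs$ at $v=0$, exactly as in the proof strategy behind Theorem \ref{teorsc1}.

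Next, for $\overline{\Gamma_1}=\overline{C_7(1,2)}$ I would invoke the first case analysis of Remark \ref{remdc1}: precisely two vertices lie at distance $1$ from $0$ (namely $3$ and $4$), two at distance $2$ (namely $1$ and $6$), and two at distance $3$ (namely $2$ and $5$). Summing reciprocals yields
\begin{equation*}
    rs_{\overline{\Gamma_1}}(0)=\frac{1}{1}+\frac{1}{1}+\frac{1}{2}+\frac{1}{2}+\frac{1}{3}+\frac{1}{3}=2+1+\frac{2}{3}=\frac{11}{3}.
\end{equation*}
For $\overline{\Gamma_2}=\overline{C_7(1,3)}$, the second case analysis of Remark \ref{remdc1} assigns the distances to different vertices (distance $1$ to $\{2,5\}$, distance $2$ to $\{3,4\}$, distance $3$ to $\{1,6\}$), but realizes the identical distance multiset $\{1,1,2,2,3,3\}$. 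Hence the same arithmetic gives $rs_{\overline{\Gamma_2}}(v)=\frac{11}{3}$, completing the argument.

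I do not anticipate any genuine obstacle: the result is a direct consequence of Remark \ref{remdc1}. The only two points requiring a word of care are (a) justifying the vertex-independence of $rs$, which follows at once from the circulancy of $\textbf{D}(\overline{\Gamma_i})$ established in Corollary \ref{cordmatccirc} (note that the transmission-regularity corollary is stated for ordinary transmission, so the reciprocal version must be argued from the circulant row structure rather than merely quoted), and (b) confirming that, although the two tables in Remark \ref{remdc1} attach each distance value to a different pair of vertices, they induce the same list of distances from $0$, which is what forces the two reciprocal transmissions to coincide.
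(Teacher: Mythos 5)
Your proposal is correct and matches the paper's own (implicit) justification: the paper states the theorem as a direct consequence of Remark \ref{remdc1} together with the definition of reciprocal transmission, which is exactly your computation $2\cdot\frac{1}{1}+2\cdot\frac{1}{2}+2\cdot\frac{1}{3}=\frac{11}{3}$. Your added care in deriving vertex-independence of $rs$ from the circulancy of the distance matrix (Corollary \ref{cordmatccirc}) is a sound filling-in of a step the paper leaves tacit, not a departure from its approach.
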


For the vertex-forwarding index and bounds for the edge-forwarding index of circulant $\overline{C_7(1,2)}$ and $\overline{C_7(1,3)}$, they can be computed by combining Remark \ref{remdc1} with Lemma \ref{lxi} and Lemma \ref{lpi}. The results are presented in the next two corollaries.

\begin{corollary}
Let $\overline{\Gamma_1}=\overline{C_7(1,2)}$ and $\overline{\Gamma_2}=\overline{C_7(1,3)}$. Then  
\begin{equation*}
    \xi(\overline{\Gamma_1})=\xi(\overline{\Gamma_2})=6.
\end{equation*}
\end{corollary}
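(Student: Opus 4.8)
The plan is to apply Lemma \ref{lxi}, which states that for a connected circulant graph $\Gamma$ of order $n$ one has $\xi(\Gamma)=\rho(\Gamma)-(n-1)$. Since the claimed value $6$ equals $12-(7-1)$, and Theorem \ref{teorhoc2a} already supplies $\rho(\overline{\Gamma_1})=\rho(\overline{\Gamma_2})=12$, the entire corollary reduces to checking that Lemma \ref{lxi} is applicable to each of $\overline{\Gamma_1}$ and $\overline{\Gamma_2}$ and then performing a one-line substitution.

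First I would verify the two hypotheses of Lemma \ref{lxi}. That $\overline{\Gamma_1}=\overline{C_7(1,2)}$ and $\overline{\Gamma_2}=\overline{C_7(1,3)}$ are circulant graphs of order $n=7$ follows directly from Lemma \ref{lemmei}, since $C_7(1,2)$ and $C_7(1,3)$ are circulant and the complement of a circulant graph is circulant. Connectivity is immediate from Remark \ref{remdc1}: the distances $d_{\overline{\Gamma_1}}(0,v)$ and $d_{\overline{\Gamma_2}}(0,v)$ listed there are all finite (taking values in $\{1,2,3\}$), and because the distance matrices are circulant (Corollary \ref{cordmatccirc}), finiteness of all distances from the $0$-vertex forces finiteness of every pairwise distance, so both graphs are connected.

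Having confirmed the hypotheses, I would substitute into Lemma \ref{lxi}. With $n=7$ and $\rho(\overline{\Gamma_i})=12$ from Theorem \ref{teorhoc2a}, we obtain
\begin{equation*}
\xi(\overline{\Gamma_i})=\rho(\overline{\Gamma_i})-(n-1)=12-(7-1)=6
\end{equation*}
for $i=1,2$, which is exactly the claim.

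I do not anticipate any serious obstacle here, since the proof is a direct consequence of the earlier results. The only point requiring even momentary care is the connectivity check—one must ensure that the graphs in question are not disconnected (as happened for $\overline{C_8(1,3)}$ in Theorem \ref{teodimatc2}), but Remark \ref{remdc1} resolves this at once by exhibiting finite distances throughout. Everything else is arithmetic.
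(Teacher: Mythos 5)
Your proposal is correct and follows essentially the same route as the paper, which likewise obtains $\xi(\overline{\Gamma_1})=\xi(\overline{\Gamma_2})=6$ by combining Remark \ref{remdc1} (via the spectral radius $\rho=12$ of Theorem \ref{teorhoc2a}) with Lemma \ref{lxi} and computing $12-(7-1)=6$. Your explicit verification of the circulant and connectivity hypotheses is a careful elaboration of steps the paper leaves implicit, not a different approach.
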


\begin{corollary}
Let $\overline{\Gamma_1}=\overline{C_7(1,2)}$ and $\overline{\Gamma_2}=\overline{C_7(1,3)}$. Then  
\begin{equation*}
    12\leq \pi(\overline{\Gamma_1})=\pi(\overline{\Gamma_2})\leq 16.
\end{equation*}
\end{corollary}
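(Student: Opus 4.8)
The plan is to invoke Lemma \ref{lpi}, which supplies both the lower and the upper bound at once, so the entire argument reduces to identifying the three parameters it requires—the order, the regularity, and the distance spectral radius—for each of the two graphs, and then substituting. First I would check that the hypotheses of Lemma \ref{lpi} are met. Both $\overline{\Gamma_1}$ and $\overline{\Gamma_2}$ are complements of circulant graphs and are therefore circulant themselves by Lemma \ref{lemmei}; and they are connected, since Remark \ref{remdc1} exhibits every vertex at finite distance (at most $3$) from the $0$-vertex.

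Next I would pin down the numerical data. The order is $n=7$ in both cases. Since $\tfrac{7}{2}\notin\{1,2\}$ and $\tfrac{7}{2}\notin\{1,3\}$, the first lemma of the preliminaries (with $|S|=2$) gives $deg_{C_7(1,2)}(v)=deg_{C_7(1,3)}(v)=2\cdot 2=4$ for every vertex $v$. Passing to the complement, each vertex then has degree $r=(n-1)-4=7-1-4=2$, so both $\overline{\Gamma_1}$ and $\overline{\Gamma_2}$ are $2$-regular circulant graphs. The remaining parameter is already available: Theorem \ref{teorhoc2a} gives $\rho(\overline{\Gamma_1})=\rho(\overline{\Gamma_2})=12$.

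Finally I would substitute $n=7$, $r=2$, and $\rho=12$ into the inequality of Lemma \ref{lpi}. The lower bound becomes $\tfrac{2\rho}{r}=\tfrac{24}{2}=12$, and the upper bound becomes $n+\rho-(2r-1)=7+12-3=16$, producing exactly $12\leq \pi(\overline{\Gamma_i})\leq 16$ for $i=1,2$. The asserted equality $\pi(\overline{\Gamma_1})=\pi(\overline{\Gamma_2})$ then follows because the two graphs share the same triple $(n,r,\rho)=(7,2,12)$, so Lemma \ref{lpi} yields identical bounds for each.

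I do not anticipate any genuine obstacle, as the proof is a direct substitution into an already-established lemma together with previously proved facts. The only step that calls for a little care is the regularity count, namely confirming that $r=2$: an error there would rescale both the lower bound $2\rho/r$ and the term $2r-1$ in the upper bound, so I would double-check the complement-degree computation $(n-1)-2|S|$ before concluding.
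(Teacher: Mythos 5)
Your derivation of the two bounds is exactly the paper's route: the paper proves this corollary by the one-line observation that Remark \ref{remdc1} together with Lemma \ref{lpi} (and the spectral radius from Theorem \ref{teorhoc2a}) gives the result, and your substitution of $(n,r,\rho)=(7,2,12)$ --- with $r=2$ from the complement-degree count $(7-1)-2\cdot 2$ and $\rho=12$ from the transmission $2\cdot 1+2\cdot 2+2\cdot 3$ --- is precisely what that entails. So $12\leq \pi(\overline{\Gamma_i})\leq 16$ for $i=1,2$ is correctly established, and checking the hypotheses of Lemma \ref{lpi} (circulant via Lemma \ref{lemmei}, connected via the finite distances in Remark \ref{remdc1}) is done properly.

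The one step that does not hold up is your justification of the asserted equality $\pi(\overline{\Gamma_1})=\pi(\overline{\Gamma_2})$. Sharing the triple $(n,r,\rho)$ only means Lemma \ref{lpi} confines both quantities to the same interval $[12,16]$; it does not force them to coincide --- two non-isomorphic graphs can easily have identical bounds and different edge-forwarding indices, so as written this is a non sequitur. The gap is small and easily closed (the paper leaves this point tacit as well): the two graphs are in fact isomorphic, since $\overline{C_7(1,2)}=C_7(3)$ and $\overline{C_7(1,3)}=C_7(2)$, and multiplication by $3$ on $\mathbb{Z}_7$ sends the edge $\{x,x+3\}$ to $\{3x,3x+2\}$, giving an isomorphism $C_7(3)\cong C_7(2)$; indeed both are just the $7$-cycle, as $2$ and $3$ each generate $\mathbb{Z}_7$, which is also visible in Remark \ref{remdc1} (both graphs are $2$-regular of diameter $3$ on $7$ vertices). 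Isomorphism then gives equality of \emph{every} invariant, in particular $\pi$, and your argument is complete once this replaces the ``identical bounds'' step.
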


The next series of results give the exact values of some distance-based topological indices of circulants $\overline{C_7(1,2)}$ and $\overline{C_7(1,3)}$. The results follow from the definition of the topological indices combined with Remark \ref{remdc1}, Theorem \ref{teorhoc2a}, Theorem \ref{teorsc2a} and the fact that the two graphs are vertex-regular graph with vertex-regularity $2$.  

\begin{corollary}
Let $\overline{\Gamma}$ denote either $\overline{C_7(1,2)}$ or $\overline{C_7(1,3)}$.. Then
\begin{multicols}{3}
\begin{enumerate}[(i)]
    \item $W(\overline{\Gamma})=42$
    \item $S(\overline{\Gamma})=168$
    \item $G(\overline{\Gamma})=168$
    \item $WW(\overline{\Gamma})=70$
    \item $H(\overline{\Gamma})=\frac{77}{6}$
    \item $H_A(\overline{\Gamma})=\frac{154}{3}$
    \item $H_M(\overline{\Gamma})=\frac{154}{3}$
    \item $T_{AG}(\overline{\Gamma})=7$
    \item $T_{GA}(\overline{\Gamma})=7$
    \item $T_{SC}(\overline{\Gamma})=\frac{7\sqrt{6}}{12}$
    \item $T_{ABC}(\overline{\Gamma})=\frac{7\sqrt{22}}{12}$
    \item $T_{AZ}(\overline{\Gamma})=\frac{2\ 612\ 736}{1\ 331}$
    \item $RT_{AG}(\overline{\Gamma})=7$
    \item $RT_{GA}(\overline{\Gamma})=7$
    \item $RT_{SC}(\overline{\Gamma})=\frac{7\sqrt{66}}{22}$
    \item $RT_{ABC}(\overline{\Gamma})=\frac{28\sqrt{3}}{11}$
    \item $RT_{AZ}(\overline{\Gamma})=\frac{12\ 400\ 927}{110\ 592}$.
\end{enumerate}
\end{multicols}
\end{corollary}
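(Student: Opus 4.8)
The plan is to exploit the full symmetry of these two circulants: each is vertex-transitive, hence simultaneously vertex-regular with degree $r=2$, transmission-regular with $Tr(v)=12$ by Theorem~\ref{teorhoc2a}, and reciprocal-transmission-regular with $rs(v)=\frac{11}{3}$ by Theorem~\ref{teorsc2a}. Under this regularity almost every defining sum collapses into (number of summands) $\times$ (a constant), so the whole corollary reduces to a handful of elementary evaluations. I would split the seventeen indices into three blocks according to the index set they are summed over.

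First block: the distance and distance-degree indices $W,S,G,WW,H,H_A,H_M$, summed over unordered vertex pairs. From Remark~\ref{remdc1} each row of $\mathbf{D}(\overline{\Gamma})$ is a permutation of the multiset $\{0,1,1,2,2,3,3\}$. Since $W=\frac{1}{2}\sum_v Tr(v)=\frac{n\,Tr(v)}{2}$, I get $W=\frac{7\cdot 12}{2}=42$. Because the graph is $2$-regular, each pair carries the constant degree weight $deg(v_i)+deg(v_j)=4$ in the Schultz sum and $deg(v_i)deg(v_j)=4$ in the Gutman sum, giving $S=4W=168$ and $G=4W=168$. For the Harary-type indices I use $H=\frac{n\,rs(v)}{2}=\frac{7\cdot 11/3}{2}=\frac{77}{6}$, and then $H_A=4H=\frac{154}{3}$ and $H_M=4H=\frac{154}{3}$. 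The one quantity not supplied by $Tr$ or $rs$ is the sum of squared distances, which I would read off the multiset as $\sum d^2=28$ per row; summing over unordered pairs gives $98$, so $WW=\frac{1}{2}(W+98)=70$.

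Second and third blocks: the transmission-based $T$-indices and the reciprocal-transmission-based $RT$-indices, summed over edges. Here $|E(\overline{\Gamma})|=\frac{nr}{2}=7$, and transmission-regularity forces $\sigma(v_i)=\sigma(v_j)=12$ on every edge, so each summand is a fixed function of $12$; multiplying the per-edge value by $7$ yields $T_{GA}=T_{AG}=7$, $T_{SC}=\frac{7}{\sqrt{24}}=\frac{7\sqrt6}{12}$, $T_{ABC}=\frac{7\sqrt{22}}{12}$, and $T_{AZ}=7\left(\frac{144}{22}\right)^3=\frac{2\,612\,736}{1\,331}$. The $RT$-indices are computed identically, with the constant reciprocal-transmission value $\frac{11}{3}$ in place of $12$ on every edge, giving $RT_{AG}=RT_{GA}=7$, $RT_{SC}=\frac{7}{\sqrt{22/3}}=\frac{7\sqrt{66}}{22}$, $RT_{ABC}=7\sqrt{\tfrac{16/3}{(11/3)^2}}=\frac{28\sqrt3}{11}$, and $RT_{AZ}=7\left(\frac{(11/3)^2}{16/3}\right)^3=\frac{12\,400\,927}{110\,592}$.

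There is no conceptual obstacle: the vertex-, transmission-, and reciprocal-transmission-regularity reduce every weighted sum to a constant times a count. The only genuinely independent inputs are the distance multiset from Remark~\ref{remdc1}, the values $Tr=12$ and $rs=\frac{11}{3}$ from Theorems~\ref{teorhoc2a} and~\ref{teorsc2a}, and the two combinatorial constants $r=2$ and $|E|=7$. The mildly error-prone part is purely arithmetic: rationalizing the radicals in $T_{SC},T_{ABC},RT_{SC},RT_{ABC}$ and cubing the fractions in $T_{AZ},RT_{AZ}$, together with remembering that the hyper-Wiener index needs the squared-distance sum, which is the single piece of data not already encoded by $Tr$ or $rs$.
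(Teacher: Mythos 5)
Your proposal is correct and is essentially the paper's own argument: the paper likewise obtains all seventeen values directly from the definitions combined with Remark \ref{remdc1}, Theorem \ref{teorhoc2a}, Theorem \ref{teorsc2a}, and the vertex-regularity $r=2$ (hence $|E(\overline{\Gamma})|=7$), so that every weighted sum collapses to a per-pair or per-edge constant times a count. All of your arithmetic checks out, including the one extra ingredient the paper leaves implicit, namely the per-row squared-distance sum $1+1+4+4+9+9=28$ needed for $WW(\overline{\Gamma})=\frac{1}{2}(42+98)=70$.
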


Finally, we consider the circulant $\overline{C_n(1,a)}$ where $2\leq a<\frac{n}{2}$. We first determine its distance spectral radius. The result follows from the definition of distance spectral radius and Theorem \ref{teodimatc2}. 

\begin{theorem}
Let $\overline{\Gamma}=\overline{C_n(1,a)}$ where $2\leq a<\frac{n}{2}$. Then
\begin{equation*}
    \rho(\overline{\Gamma})=n+3.
\end{equation*}
\label{teorhoc2b}
\end{theorem}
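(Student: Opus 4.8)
The plan is to exploit the fact, recorded in the preliminaries, that for a circulant graph the distance spectral radius coincides with the transmission of any single vertex; the problem then reduces to a short transmission count driven by Theorem \ref{teodimatc2}.

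First I would observe that $\overline{\Gamma}=\overline{C_n(1,a)}$ is itself circulant (Lemma \ref{lemmei}), so by Corollary \ref{cordmatccirc} its distance matrix $\textbf{D}(\overline{\Gamma})$ is circulant. Consequently the all-ones vector is an eigenvector of $\textbf{D}(\overline{\Gamma})$ whose eigenvalue is the common row sum, and since $\textbf{D}(\overline{\Gamma})$ is a nonnegative matrix this row sum is exactly its largest eigenvalue. This is precisely the content of the Lin et al. lemma asserting $\rho(\overline{\Gamma})=Tr_{\overline{\Gamma}}(v)$ for any $v\in V(\overline{\Gamma})$. Taking $v=0$, it remains only to evaluate the transmission of the $0$-vertex.

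Next I would read off the distances supplied by Theorem \ref{teodimatc2}: exactly the four vertices $\{1,a,n-a,n-1\}$ lie at distance $2$ from $0$, while each of the remaining $n-5$ nonzero vertices lies at distance $1$. Summing the row of $\textbf{D}(\overline{\Gamma})$ indexed by $0$ then gives
\begin{equation*}
\rho(\overline{\Gamma})=Tr_{\overline{\Gamma}}(0)=2\cdot 4+1\cdot(n-5)=n+3,
\end{equation*}
which is the claimed value.

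The only genuine subtlety — a point I would verify carefully rather than the main difficulty — is that the four vertices $1,a,n-a,n-1$ are pairwise distinct, so that the distance-$2$ count is really $4$ and the distance-$1$ count is really $n-5$. This follows from the hypothesis $2\leq a<\frac{n}{2}$: the bound $a\geq 2$ rules out $a=1$, the bound $a<\frac{n}{2}$ forces $a\neq n-a$ and $n-a>\frac{n}{2}>1$, and (for $n\geq 8$) the values $1$ and $n-1$ are distinct from each other and from $a,n-a$. I would also flag explicitly that the statement is understood to inherit the hypotheses of Theorem \ref{teodimatc2} — namely $n\geq 8$ together with the exclusion of the disconnected exceptional case $\overline{C_8(1,3)}$ — while the residual small case $n=7$ is handled separately via Remark \ref{remdc1}.
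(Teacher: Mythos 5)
Your proposal is correct and follows essentially the same route as the paper, which simply derives $\rho(\overline{\Gamma})=Tr_{\overline{\Gamma}}(v)$ from the Lin et al.\ lemma for circulants and reads the transmission $2\cdot 4+1\cdot(n-5)=n+3$ off Theorem \ref{teodimatc2}. Your added checks --- that $1,a,n-a,n-1$ are pairwise distinct and that the hypotheses $n\geq 8$ with the exclusion of $\overline{C_8(1,3)}$ are inherited --- are details the paper leaves implicit, and they strengthen rather than alter the argument.
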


Another consequence of Theorem \ref{teodimatc2} talks about the reverse transmission of a vertex in $\overline{C_n(1,a)}$ where $2\leq a<\frac{n}{2}$.

\begin{theorem}
Let $\overline{\Gamma}=\overline{C_n(1,a)}$ where $2\leq a<\frac{n}{2}$ and $v\in V(\overline{\Gamma})$. Then  
\begin{equation*}
    rs_{\overline{\Gamma}}(v)=n-3.
\end{equation*}
\label{teorsc2b}
\end{theorem}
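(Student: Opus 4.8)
The plan is to compute the reciprocal transmission directly from the distance formula established in Theorem \ref{teodimatc2}. First I would invoke Corollary \ref{cordmatccirc}, which guarantees that the distance matrix of $\overline{\Gamma}=\overline{C_n(1,a)}$ is circulant. Since every row of a circulant matrix is a cyclic shift of the first row, the multiset of entries---and hence the multiset of reciprocals of its nonzero entries---is identical from row to row. This forces $rs_{\overline{\Gamma}}(v)$ to be independent of the choice of $v$, so it suffices to compute $rs_{\overline{\Gamma}}(0)$, the reciprocal transmission of the $0$-vertex. (This is exactly the reciprocal-transmission-regularity already noted for circulants earlier in the paper.)

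Next I would read off the distance distribution from the $0$-vertex. By Theorem \ref{teodimatc2}, among the $n-1$ vertices distinct from $0$, exactly the four vertices in $\{1,a,n-a,n-1\}$ lie at distance $2$, while all the remaining vertices lie at distance $1$. Before summing, I would confirm that these four vertices are genuinely distinct, so that the count of four is correct. Using the hypotheses $2\le a<\frac{n}{2}$ and $n\ge 8$, one checks the pairwise inequalities: $a\ne 1$ since $a\ge 2$; $a\ne n-a$ since $a\ne\frac{n}{2}$; $a\ne n-1$ and $n-a\ne 1$ since $a<\frac{n}{2}<n-1$; $n-a\ne n-1$ since $a\ne 1$; and $1\ne n-1$ since $n\ge 8$. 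Hence there are $n-1-4=n-5$ vertices at distance $1$ and $4$ vertices at distance $2$.

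Finally I would assemble the sum of reciprocals of the nonzero distances:
\begin{equation*}
rs_{\overline{\Gamma}}(0)=(n-5)\cdot\frac{1}{1}+4\cdot\frac{1}{2}=(n-5)+2=n-3.
\end{equation*}
By the first step this value is shared by every vertex of $\overline{\Gamma}$, which yields the claim.

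The computation is routine; the only point demanding genuine care is the distinctness check for $\{1,a,n-a,n-1\}$, since an accidental coincidence would shrink the set of distance-$2$ vertices and thereby alter the final value. This is precisely where the standing hypotheses $n\ge 8$ and $2\le a<\frac{n}{2}$ (which also exclude the exceptional graph $\overline{C_8(1,3)}$ appearing in Theorem \ref{teodimatc2}) do the real work, by ruling out every such coincidence.
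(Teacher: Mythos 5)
Your proposal is correct and follows exactly the route the paper intends: the paper states this theorem without a written proof, noting only that it ``follows from'' Theorem \ref{teodimatc2}, and your computation $(n-5)\cdot 1 + 4\cdot\frac{1}{2} = n-3$ together with the reduction to the $0$-vertex via circulancy (Corollary \ref{cordmatccirc}) is precisely that implicit argument. Your explicit verification that the four vertices $\{1,a,n-a,n-1\}$ are pairwise distinct under the hypotheses $2\leq a<\frac{n}{2}$ and $n\geq 8$ is a worthwhile detail the paper glosses over.
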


For the vertex-forwarding index and bounds for the edge-forwarding index of circulant $\overline{C_n(1,a)}$ where $2\leq a<\frac{n}{2}$, they can be computed by combining Theorem \ref{teodimatc2} with Lemma \ref{lxi} and Lemma \ref{lpi}. The results are presented in the next two corollaries.

\begin{corollary}
Let $\overline{\Gamma}=\overline{C_n(1,a)}$ where $2\leq a<\frac{n}{2}$. Then  
\begin{equation*}
    \xi(\overline{\Gamma})=4.
\end{equation*}
\end{corollary}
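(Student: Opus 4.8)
The plan is to reduce the computation of $\xi(\overline{\Gamma})$ directly to the distance spectral radius by way of Lemma \ref{lxi}, so that the entire argument collapses to a single substitution. First I would verify that the hypotheses of Lemma \ref{lxi} are satisfied. By Lemma \ref{lemmei} the complement $\overline{\Gamma}=\overline{C_n(1,a)}$ is again a circulant graph, and it is connected: Theorem \ref{teodimatc2} shows that every nonzero vertex lies at distance $1$ or $2$ from the $0$-vertex, so (equivalently, via the diameter-two conclusion of Lemma \ref{lemgut}) $\overline{\Gamma}$ is a connected circulant graph of order $n$. This is exactly the setting in which Lemma \ref{lxi} applies.

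Next I would invoke Lemma \ref{lxi} to obtain $\xi(\overline{\Gamma})=\xi_m(\overline{\Gamma})=\rho(\overline{\Gamma})-(n-1)$, and then substitute the value of the distance spectral radius already established in Theorem \ref{teorhoc2b}, namely $\rho(\overline{\Gamma})=n+3$. The claimed identity then follows from the arithmetic $(n+3)-(n-1)=4$.

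The genuinely substantive work here — determining the distance matrix of $\overline{C_n(1,a)}$ and hence its spectral radius — has already been carried out in Theorem \ref{teodimatc2} and Theorem \ref{teorhoc2b}, so no real obstacle remains for this corollary. The only point requiring a moment's care is confirming connectedness so that Lemma \ref{lxi} is legitimately applicable, but this is immediate once the diameter-two conclusion of Theorem \ref{teodimatc2} is in hand. I would also remark that the ranges $2\le a<\frac{n}{2}$ and $n\ge 8$ (excluding the disconnected case $\overline{C_8(1,3)}$) are precisely those under which Theorem \ref{teorhoc2b} holds, so no extra case analysis is needed and the value $\xi(\overline{\Gamma})=4$ holds uniformly across the stated family.
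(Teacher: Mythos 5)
Your proposal is correct and follows essentially the same route as the paper: the corollary is obtained by combining Theorem \ref{teodimatc2} (hence $\rho(\overline{\Gamma})=n+3$ as in Theorem \ref{teorhoc2b}) with Lemma \ref{lxi}, giving $\xi(\overline{\Gamma})=(n+3)-(n-1)=4$. Your explicit check of connectedness (and the exclusion of the disconnected case $\overline{C_8(1,3)}$) is a point the paper leaves implicit, but it does not change the argument.
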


\begin{corollary}
Let $\overline{\Gamma}=\overline{C_n(1,a)}$ where $2\leq a<\frac{n}{2}$. Then  
\begin{equation*}
    \frac{2(n+3)}{n-5}\leq \pi(\overline{\Gamma})\leq 14.
\end{equation*}
\end{corollary}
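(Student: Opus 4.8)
The plan is to invoke Lemma \ref{lpi}, which bounds the edge-forwarding index of any connected $r$-regular circulant graph in terms of its order $n$ and distance spectral radius. The entire argument reduces to identifying the three quantities $n$, $r$, and $\rho(\overline{\Gamma})$ for $\overline{\Gamma}=\overline{C_n(1,a)}$ and substituting them, so there is little more to do than a verification that the hypotheses apply and a short simplification.

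First I would check that $\overline{\Gamma}$ meets the hypotheses of Lemma \ref{lpi}. By Lemma \ref{lemmei} the complement of a circulant graph is again circulant, and Theorem \ref{teodimatc2} shows that $\overline{\Gamma}$ has only the distances $1$ and $2$ between distinct vertices (diameter two), hence is connected, with the single exceptional case $\overline{C_8(1,3)}$ excluded exactly as in Theorem \ref{teodimatc2}. Next I would pin down the regularity: since $2\le a<\frac{n}{2}$, the connection set $\{1,a\}$ has size $2$ and does not contain $\frac{n}{2}$, so every vertex of $C_n(1,a)$ has degree $2\cdot 2 = 4$; passing to the complement on $n$ vertices then makes $\overline{\Gamma}$ an $r$-regular graph with $r=n-1-4=n-5$. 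Because Theorem \ref{teodimatc2} assumes $n\ge 8$, we have $r=n-5\ge 3>0$, so the lower bound is well defined.

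With these data, together with the value $\rho(\overline{\Gamma})=n+3$ already established in Theorem \ref{teorhoc2b}, I would substitute directly into Lemma \ref{lpi}. The lower bound is
\begin{equation*}
\frac{2\rho(\overline{\Gamma})}{r}=\frac{2(n+3)}{n-5},
\end{equation*}
which is exactly the claimed left-hand side. For the upper bound, I would compute
\begin{equation*}
n+\rho(\overline{\Gamma})-(2r-1)=n+(n+3)-\bigl(2(n-5)-1\bigr)=2n+3-(2n-11)=14,
\end{equation*}
giving the claimed right-hand side.

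There is no substantive obstacle here; the proof is a direct specialization of Lemma \ref{lpi}. The only points demanding any care are (a) confirming the regularity $r=n-5$, i.e.\ that complementation in an $n$-vertex graph turns the $4$-regular $C_n(1,a)$ into an $(n-5)$-regular graph, and (b) noting that connectedness and the spectral radius formula both inherit the exclusion of $\overline{C_8(1,3)}$ from Theorem \ref{teodimatc2}, so that the bounds hold in precisely the stated range.
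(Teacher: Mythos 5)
Your proposal is correct and matches the paper's intended argument exactly: the paper derives this corollary by combining Theorem \ref{teodimatc2} (via the distance spectral radius $\rho(\overline{\Gamma})=n+3$ of Theorem \ref{teorhoc2b}) with Lemma \ref{lpi}, using the vertex-regularity $r=n-5$, just as you do. Your substitutions and the simplification $n+(n+3)-(2(n-5)-1)=14$ are accurate, and your care in checking connectedness, the regularity $r=n-5$, and the exclusion of $\overline{C_8(1,3)}$ is consistent with the paper's treatment.
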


The next series of results give the exact values of some distance-based topological indices of circulants $\overline{C_n(1,a)}$ where $2\leq a<\frac{n}{2}$. The results follow from the definition of the topological indices combined with Theorem \ref{teodimatc2}, Theorem \ref{teorhoc2b}, Theorem \ref{teorsc2b} and the fact that the graph is vertex-regular graph with vertex-regularity $n-5$.  

\begin{corollary}
Let $\overline{\Gamma}=\overline{C_n(1,a)}$ where $2\leq a<\frac{n}{2}$. Then
\begin{multicols}{2}
\begin{enumerate}[(i)]
    \item $W(\overline{\Gamma})=\frac{n(n+3)}{2}$
    \item $S(\overline{\Gamma})=n(n-5)(n+3)$
    \item $G(\overline{\Gamma})=\frac{n(n+3)(n-5)^2}{2}$
    \item $WW(\overline{\Gamma})=\frac{n(2n+14)}{4}$
    \item $H(\overline{\Gamma})=\frac{n(n-3)}{2}$
    \item $H_A(\overline{\Gamma})=n(n-5)(n-3)$
    \item $H_M(\overline{\Gamma})=\frac{n(n-3)(n-5)^2}{2}$
    \item $T_{AG}(\overline{\Gamma})=\frac{n(n-5)}{2}$
    \item $T_{GA}(\overline{\Gamma})=\frac{n(n-5)}{2}$
    \item $T_{SC}(\overline{\Gamma})=\frac{n(n-5)}{2\sqrt{2}\sqrt{n+3}}$
    \item $T_{ABC}(\overline{\Gamma})=\frac{n(n-5)\sqrt{n+2}}{\sqrt{2}(n+3)}$
    \item $T_{AZ}(\overline{\Gamma})=\frac{n(n-5)(n+3)^6}{16(n+2)^3}$
    \item $RT_{AG}(\overline{\Gamma})=\frac{n(n-5)}{2}$
    \item $RT_{GA}(\overline{\Gamma})=\frac{n(n-5)}{2}$
    \item $RT_{SC}(\overline{\Gamma})=\frac{n(n-5)}{2\sqrt{2}\sqrt{n-3}}$
    \item $RT_{ABC}(\overline{\Gamma})=\frac{n(n-5)\sqrt{n-4}}{\sqrt{2}(n-3)}$
    \item $RT_{AZ}(\overline{\Gamma})=\frac{n(n-5)(n-3)^6}{16(n-4)^3}$.
\end{enumerate}
\end{multicols}
\end{corollary}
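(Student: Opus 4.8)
The plan is to organize the seventeen computations by how each index aggregates over the graph, since they fall into three natural groups, and then feed in the four facts already in hand: the distance distribution of Theorem \ref{teodimatc2}, the constant transmission $Tr(v)=n+3$ from Theorem \ref{teorhoc2b}, the constant reciprocal transmission $rs(v)=n-3$ from Theorem \ref{teorsc2b}, and the vertex-regularity of $\overline{\Gamma}$ with common degree $n-5$. The first preparatory step is to record two pair-counts. By Theorem \ref{teodimatc2} every vertex has exactly $4$ vertices at distance $2$ and the remaining $n-5$ vertices at distance $1$; summing over all $n$ vertices and halving, there are exactly $2n$ unordered pairs at distance $2$ and $\tfrac{n(n-5)}{2}$ unordered pairs at distance $1$. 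The latter number is also the edge count $|E|$ of $\overline{\Gamma}$, which equals $\tfrac{nr}{2}=\tfrac{n(n-5)}{2}$ by regularity.

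Next I would dispatch the purely distance-based and distance-degree-based indices (items (i)--(vii)). For $W$ and $H$ I invoke transmission- and reciprocal-transmission-regularity directly, writing $W=\tfrac{1}{2}\sum_v Tr(v)=\tfrac{n(n+3)}{2}$ and $H=\tfrac{1}{2}\sum_v rs(v)=\tfrac{n(n-3)}{2}$. For $WW$ I substitute the two pair-counts into its definition, so that distance-$1$ pairs each contribute $1+1^2$ and distance-$2$ pairs each contribute $2+2^2$. Because the graph is $(n-5)$-regular, the Schultz and Gutman indices collapse to $2(n-5)\,W$ and $(n-5)^2\,W$, while the additively and multiplicatively weighted Harary indices collapse to $2(n-5)\,H$ and $(n-5)^2\,H$; each stated value is then immediate.

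Finally I would treat the two edge-summed families (items (viii)--(xvii)). Since $\overline{\Gamma}$ is transmission-regular, the transmission weight satisfies $\sigma(v_i)=\sigma(v_j)=n+3$ on every edge, so each summand in $T_{GA},T_{SC},T_{AG},T_{ABC},T_{AZ}$ becomes a single constant, and the whole sum is that constant times $|E|=\tfrac{n(n-5)}{2}$. The identical reduction with $rs(v)=n-3$ handles the reciprocal-transmission family. In particular, both arithmetic-geometric and geometric-arithmetic summands evaluate to $1$ on each edge, giving precisely $|E|$ for $T_{AG},T_{GA},RT_{AG},RT_{GA}$.

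The only real obstacle here is bookkeeping rather than mathematics: once the pair-counts, the constant transmission, the constant reciprocal transmission, and the degree are fixed, every listed value follows by substitution. The one place to take care is the algebraic reduction of the atom-bond-connectivity and augmented-Zagreb entries (items (xi), (xii), (xvi), (xvii)), where radicals and cubes must be simplified to match the stated closed forms; for instance one rewrites $\sqrt{\,2(n+3)-2\,}=\sqrt{2}\,\sqrt{n+2}$ and cubes $\tfrac{(n+3)^2}{2(n+2)}$ to land on $\tfrac{n(n-5)(n+3)^6}{16(n+2)^3}$, with the analogous reductions using $n-3$ and $n-4$ for the reciprocal versions.
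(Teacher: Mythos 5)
Your proposal is correct and follows exactly the route the paper indicates: the paper proves this corollary by the one-line observation that the values follow from the definitions of the indices combined with Theorem \ref{teodimatc2}, Theorem \ref{teorhoc2b} ($Tr(v)=n+3$), Theorem \ref{teorsc2b} ($rs(v)=n-3$), and $(n-5)$-regularity, which is precisely the substitution scheme you carry out. Your pair-counts ($2n$ pairs at distance $2$, $\tfrac{n(n-5)}{2}$ at distance $1$) and the edge-constant reductions for the transmission- and reciprocal-transmission-based families all check out against the stated closed forms.
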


\section{Forwarding Indices and Some Distance-based Topological Indices of $\overline{C_{m^h}(1,m,m^2,\ldots,m^{h-1})}$}

In this section, we state some of the consequences of Theorem \ref{teomcd} and Remark \ref{remmc23}. We begin by considering the distance spectral radius of circulant $\overline{C_{2^3}(1,2,2^2)}$. The result follows from the definition of distance spectral radius and Remark \ref{remmc23}.

\begin{theorem}
Let $\overline{\Gamma_{2^3}}=\overline{C_{2^3}(1,2,2^2)}$. Then
\begin{equation*}
    \rho(\overline{\Gamma_{2^3}})=16.
\end{equation*}
\label{teorhomc23}
\end{theorem}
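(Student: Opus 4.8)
The plan is to reduce the computation of the distance spectral radius to a single transmission computation. By Lemma \ref{lemmei} the complement $\overline{\Gamma_{2^3}}$ of the circulant $\Gamma_{2^3}=C_{2^3}(1,2,2^2)$ is again a circulant graph, so by Corollary \ref{cordmatccirc} its distance matrix $\textbf{D}(\overline{\Gamma_{2^3}})$ is circulant. For such a matrix the all-ones vector is an eigenvector whose eigenvalue is the common row sum, namely the transmission of any vertex; since the distance matrix is nonnegative and symmetric and the all-ones vector is strictly positive, Perron--Frobenius forces this eigenvalue to be the largest one. This is exactly the earlier lemma of Lin et al. stating that $\rho(\Gamma)=Tr_{\Gamma}(v)$ for any vertex $v$ of a circulant graph $\Gamma$. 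Hence it suffices to evaluate $Tr_{\overline{\Gamma_{2^3}}}(0)$.

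First I would read the distances from the $0$-vertex directly off Remark \ref{remmc23}, which records $d(0,v)=1$ for $v\in\{3,5\}$, $d(0,v)=2$ for $v\in\{2,6\}$, $d(0,v)=3$ for $v\in\{1,7\}$, and $d(0,4)=4$, together with $d(0,0)=0$. Summing these over all eight vertices gives $Tr_{\overline{\Gamma_{2^3}}}(0)=2(1)+2(2)+2(3)+4=16$, and the conclusion $\rho(\overline{\Gamma_{2^3}})=16$ follows immediately.

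There is essentially no genuine obstacle here: the only points requiring care are, first, confirming that the eight elements of $\mathbb{Z}_8$ are fully accounted for in the distance data of Remark \ref{remmc23}, so that no contribution to the row sum is omitted; and second, noting that the identification of the row sum with the spectral radius rests on the circulant (equivalently, transmission-regular) structure rather than on the diameter-two behaviour exploited elsewhere in the paper, since $\overline{\Gamma_{2^3}}$ is precisely the exceptional case whose distances exceed $2$. Once these are in place, the result is a one-line arithmetic consequence of the transmission computation.
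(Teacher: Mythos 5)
Your proposal is correct and takes essentially the same route as the paper, which likewise obtains $\rho(\overline{\Gamma_{2^3}})$ as the common row sum of the circulant distance matrix, i.e., the transmission $Tr_{\overline{\Gamma_{2^3}}}(0)=2(1)+2(2)+2(3)+4=16$ read off from Remark \ref{remmc23}, combined with the cited lemma of Lin et al.\ that $\rho(\Gamma)=Tr_{\Gamma}(v)$ for circulants. Your Perron--Frobenius justification of that lemma and your remark that this exceptional diameter-$4$ case relies on transmission-regularity rather than the diameter-two machinery are sound but inessential additions to what the paper does.
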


Another consequence of Remark \ref{remmc23} talks about the reverse transmission of a vertex in $\overline{C_{2^3}(1,2,2^2)}$.

\begin{theorem}
Let $\overline{\Gamma_{2^3}}=\overline{C_{2^3}(1,2,2^2)}$ and let $v\in V(\overline{\Gamma})$. Then  
\begin{equation*}
    rs_{\overline{\Gamma_{2^3}}}(v)=\frac{47}{12}.
\end{equation*}
\label{teortmc23}
\end{theorem}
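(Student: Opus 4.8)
The plan is to reduce the computation to a single vertex by exploiting the circulant structure established earlier. By Corollary~\ref{cordmatccirc}, the distance matrix $\textbf{D}(\overline{\Gamma_{2^3}})$ is circulant, so every row is a cyclic shift of the first and hence carries the same multiset of entries. Consequently the reciprocal transmission $rs_{\overline{\Gamma_{2^3}}}(v)$, being the sum of the reciprocals of the nonzero entries of the row indexed by $v$, does not depend on the choice of $v$. It therefore suffices to evaluate $rs_{\overline{\Gamma_{2^3}}}(0)$, and the stated value will then hold for every $v\in V(\overline{\Gamma_{2^3}})$.

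First I would read off the distances from the $0$-vertex to all other vertices directly from Remark~\ref{remmc23}: two vertices (namely $3$ and $5$) lie at distance $1$, two vertices ($2$ and $6$) at distance $2$, two vertices ($1$ and $7$) at distance $3$, and the single vertex $4$ lies at distance $4$. Then I would assemble the reciprocal transmission as the corresponding sum of reciprocals,
\begin{equation*}
rs_{\overline{\Gamma_{2^3}}}(0)=\frac{2}{1}+\frac{2}{2}+\frac{2}{3}+\frac{1}{4},
\end{equation*}
and clear denominators over the common denominator $12$ to obtain $\frac{24+12+8+3}{12}=\frac{47}{12}$.

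There is essentially no obstacle here: the argument is a short finite computation whose only delicate point is correctly tallying the number of vertices at each distance, and that tally is handed to us verbatim by Remark~\ref{remmc23}. All of the genuine difficulty was absorbed into the proof of Theorem~\ref{teomcd} and its attendant Remark~\ref{remmc23}, which determined the distance distribution of the $0$-vertex in $\overline{\Gamma_{2^3}}$; once that distribution is fixed, the present theorem follows immediately from the definition of reciprocal transmission together with the vertex-independence guaranteed by the circulant property.
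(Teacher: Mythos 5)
Your proposal is correct and follows exactly the paper's (implicit) argument: the paper derives this theorem directly from Remark~\ref{remmc23} together with the circulant structure, just as you do, and your tally $\frac{2}{1}+\frac{2}{2}+\frac{2}{3}+\frac{1}{4}=\frac{47}{12}$ is accurate. Nothing is missing.
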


For the vertex-forwarding index and bounds for the edge-forwarding index of circulant $\overline{C_{2^3}(1,2,2^2)}$, it can be computed by combining Remark \ref{remmc23} with Lemma \ref{lxi} and Lemma \ref{lpi}. The results are presented in the next two corollaries.

\begin{corollary}
Let $\overline{\Gamma_{2^3}}=\overline{C_{2^3}(1,2,2^2)}$. Then  
\begin{equation*}
    \xi(\overline{\Gamma_{2^3}})=9.
\end{equation*}
\end{corollary}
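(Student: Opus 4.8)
The plan is to read off $\xi(\overline{\Gamma_{2^3}})$ directly from Lemma \ref{lxi}, which for a connected circulant graph $\Gamma$ of order $n$ supplies the clean identity $\xi(\Gamma)=\rho(\Gamma)-(n-1)$. So the first thing I would do is verify that the hypotheses of that lemma are genuinely satisfied. Since $\overline{\Gamma_{2^3}}$ is the complement of the circulant $C_{2^3}(1,2,2^2)$, it is itself circulant by Corollary \ref{cordmatccirc}. Moreover, Remark \ref{remmc23} lists the distances from the $0$-vertex and they are all finite (the largest being $4$), so $\overline{\Gamma_{2^3}}$ is connected. Hence Lemma \ref{lxi} applies with $n=2^3=8$.

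Next I would record the distance spectral radius. By Theorem \ref{teorhomc23} we have $\rho(\overline{\Gamma_{2^3}})=16$; equivalently, this follows on the spot by summing the entries of the row indexed by the $0$-vertex in Remark \ref{remmc23}, namely $1+1+2+2+3+3+4=16$, which is the transmission $Tr(v)$ and therefore equals $\rho$ because the graph is transmission-regular. Substituting into Lemma \ref{lxi} then yields $\xi(\overline{\Gamma_{2^3}})=\rho(\overline{\Gamma_{2^3}})-(n-1)=16-(8-1)=9$, as claimed.

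I do not anticipate any serious obstacle: the whole argument reduces to confirming the applicability of Lemma \ref{lxi} and performing a one-line arithmetic. The one point genuinely worth flagging is connectivity, since the complement of a circulant need not be connected in general (for instance $\overline{C_8(1,3)}$ is disconnected, as noted after Theorem \ref{teodimatc2}); in the present case connectivity is guaranteed by the finiteness of all the distances recorded in Remark \ref{remmc23}, so the appeal to Lemma \ref{lxi} is legitimate.
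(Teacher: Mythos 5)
Your proposal is correct and follows essentially the same route as the paper: combining Remark \ref{remmc23} (equivalently Theorem \ref{teorhomc23}, giving $\rho(\overline{\Gamma_{2^3}})=16$) with Lemma \ref{lxi} to get $\xi(\overline{\Gamma_{2^3}})=16-(8-1)=9$. Your extra care in verifying connectivity (so that Lemma \ref{lxi} genuinely applies) is a sensible addition the paper leaves implicit, but it does not change the argument.
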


\begin{corollary}
Let $\overline{\Gamma_{2^3}}=\overline{C_{2^3}(1,2,2^2)}$. Then  
\begin{equation*}
    16\leq \pi(\overline{\Gamma_{2^3}})\leq 21.
\end{equation*}
\end{corollary}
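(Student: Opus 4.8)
The plan is to invoke the edge-forwarding bound of Lemma \ref{lpi} directly, so the entire task reduces to pinning down the three parameters that bound needs: the order $n$, the regularity $r$, and the distance spectral radius $\rho(\overline{\Gamma_{2^3}})$. Once these are fixed, the corollary is a single substitution.

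First I would record the order, $n = 2^3 = 8$. Next I would determine the regularity $r$ of $\overline{\Gamma_{2^3}}$. The cleanest route is to read it off from Remark \ref{remmc23}: the only vertices at distance $1$ from the $0$-vertex are $3$ and $5$, so the $0$-vertex has exactly two neighbors in $\overline{\Gamma_{2^3}}$, and since the graph is circulant (hence vertex-transitive) it is $2$-regular, i.e.\ $r = 2$. Equivalently, $C_8(1,2,4)$ has $4 = n/2$ as a half-jump and is therefore $5$-regular, so its complement is $(n-1-5) = 2$-regular. The same remark lists only finite distances, confirming that $\overline{\Gamma_{2^3}}$ is connected, as required by the hypothesis of Lemma \ref{lpi}.

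Then I would import $\rho(\overline{\Gamma_{2^3}}) = 16$ from Theorem \ref{teorhomc23}. With $n = 8$, $r = 2$, and $\rho = 16$ in hand, substituting into Lemma \ref{lpi} yields the lower bound $\tfrac{2\rho}{r} = \tfrac{2\cdot 16}{2} = 16$ and the upper bound $n + \rho - (2r-1) = 8 + 16 - 3 = 21$, which are precisely the claimed inequalities $16 \leq \pi(\overline{\Gamma_{2^3}}) \leq 21$.

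There is no genuine obstacle here; once Theorem \ref{teorhomc23} supplies $\rho = 16$ and the connectivity and regularity hypotheses of Lemma \ref{lpi} are checked, the result is a one-line computation. The only point warranting care is the value of $r$: misreading the half-jump $4 = n/2$ (and hence the degree of $C_8(1,2,4)$) would propagate an error into both bounds, so I would double-check the regularity against the neighbor count in Remark \ref{remmc23} before substituting into the formula.
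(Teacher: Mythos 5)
Your proposal is correct and follows exactly the paper's route: the paper obtains this corollary by combining Remark \ref{remmc23} (which gives $\rho(\overline{\Gamma_{2^3}})=16$ via Theorem \ref{teorhomc23}) with Lemma \ref{lpi}, using $n=8$ and $r=2$ just as you do. Your extra verification of the regularity (noting $4=n/2$ makes $C_8(1,2,4)$ five-regular, hence the complement is $2$-regular) and of connectivity is careful but does not change the argument.
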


The next series of results give the exact values of some distance-based topological indices of circulant $\overline{C_{2^3}(1,2,2^2)}$. The results follow from the definition of the topological indices combined with Remark \ref{remmc23}, Theorem \ref{teorhomc23}, Theorem \ref{teortmc23} and the fact that the graph is vertex-regular graph with vertex-regularity $2$.  

\begin{corollary}
Let $\overline{\Gamma_{2^3}}=\overline{C_{2^3}(1,2,2^2)}$. Then
\begin{multicols}{3}
\begin{enumerate}[(i)]
    \item $W(\overline{\Gamma_{2^3}})=64$
    \item $S(\overline{\Gamma_{2^3}})=256$
    \item $G(\overline{\Gamma_{2^3}})=256$
    \item $WW(\overline{\Gamma_{2^3}})=120$
    \item $H(\overline{\Gamma_{2^3}})=\frac{47}{3}$
    \item $H_A(\overline{\Gamma_{2^3}})=\frac{188}{3}$
    \item $H_M(\overline{\Gamma_{2^3}})=\frac{188}{3}$
    \item $T_{AG}(\overline{\Gamma_{2^3}})=8$
    \item $T_{GA}(\overline{\Gamma_{2^3}})=8$
    \item $T_{SC}(\overline{\Gamma_{2^3}})=\sqrt{2}$
    \item $T_{ABC}(\overline{\Gamma_{2^3}})=\frac{\sqrt{30}}{2}$
    \item $T_{AZ}(\overline{\Gamma_{2^3}})=\frac{16\ 777\ 216}{3\ 375}$
    \item $RT_{AG}(\overline{\Gamma_{2^3}})=8$
    \item $RT_{GA}(\overline{\Gamma_{2^3}})=8$
    \item $RT_{SC}(\overline{\Gamma_{2^3}})=\frac{8\sqrt{282}}{47}$
    \item $RT_{ABC}(\overline{\Gamma_{2^3}})=\frac{16\sqrt{210}}{47}$
    \item $RT_{AZ}(\overline{\Gamma_{2^3}})=\frac{10\ 779\ 215\ 329}{74\ 088\ 000}$.
\end{enumerate}
\end{multicols}
\end{corollary}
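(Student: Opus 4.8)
The plan is to read off everything from the single-vertex distance data in Remark \ref{remmc23} together with the two scalar invariants already established, namely the transmission $Tr_{\overline{\Gamma_{2^3}}}(v)=\rho(\overline{\Gamma_{2^3}})=16$ (Theorem \ref{teorhomc23}) and the reciprocal transmission $rs_{\overline{\Gamma_{2^3}}}(v)=\frac{47}{12}$ (Theorem \ref{teortmc23}), and the fact that $\overline{\Gamma_{2^3}}$ is $2$-regular on $n=8$ vertices. Because the graph is circulant, every vertex has the same distance distribution as vertex $0$: exactly two vertices at each of distances $1,2,3$ and one vertex at distance $4$. This yields the three aggregate quantities I will need, $\sum_v d(0,v)=16$, $\sum_v d(0,v)^2 = 1^2\cdot2+2^2\cdot2+3^2\cdot2+4^2\cdot1 = 44$, and $\sum_v \frac{1}{d(0,v)} = \frac{47}{12}$.

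First I would handle the purely distance-based indices $W$, $WW$, $H$. Each is a sum over unordered pairs; summing instead over ordered pairs doubles every term and, by vertex-transitivity of the circulant, replaces the pair-sum by $n$ copies of the vertex-$0$ contribution. Thus $W=\frac12\, n\sum_v d(0,v)=\frac12\cdot8\cdot16=64$; replacing $d$ by $1/d$ gives $H=\frac12\cdot8\cdot\frac{47}{12}=\frac{47}{3}$; and for $WW=\frac12\sum_{\{v_i,v_j\}}(d+d^2)$ I combine $\sum d = 64$ with the unordered sum of squares $\frac12 n\sum_v d(0,v)^2 = \frac12\cdot 8\cdot 44 = 176$ to get $\frac12(64+176)=120$.

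Next I would dispose of the distance-degree indices $S$, $G$, $H_A$, $H_M$ using $2$-regularity: since $deg(v_i)=deg(v_j)=2$ for every pair, the weights collapse to the constants $deg(v_i)+deg(v_j)=4$ and $deg(v_i)deg(v_j)=4$. Hence $S=4W=256$ and $G=4W=256$, while $H_A=4H=\frac{188}{3}$ and $H_M=4H=\frac{188}{3}$.

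Finally, the transmission- and reciprocal-transmission-based indices are sums over the $m=\frac{nr}{2}=8$ edges, and the essential simplification is that $\overline{\Gamma_{2^3}}$ is transmission-regular and reciprocal-transmission-regular: every $\sigma(v)$ equals $16$ and every $rs(v)$ equals $\frac{47}{12}$. Consequently each summand is evaluated at equal arguments and is the same constant, so every such index equals $8$ times that constant. I would substitute $\sigma=16$ (respectively $rs=\frac{47}{12}$) into each edge-formula once; for instance $T_{GA}=T_{AG}=8$ since the arithmetic and geometric means of two equal numbers coincide, $T_{SC}=\frac{8}{\sqrt{32}}=\sqrt2$, $T_{ABC}=8\cdot\frac{\sqrt{30}}{16}=\frac{\sqrt{30}}2$, and likewise down the reciprocal-transmission list. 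The only genuine effort is arithmetic bookkeeping in the cubed expressions $T_{AZ}$ and $RT_{AZ}$ and the surds in $RT_{ABC}$; I expect the main (purely computational) obstacle to be keeping these fractions in lowest terms, since the correctness of every stated value rests entirely on transmission-regularity reducing each edge-sum to a single evaluation scaled by the edge count.
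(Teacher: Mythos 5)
Your proposal is correct and follows essentially the same route as the paper, which derives the corollary from the distance distribution in Remark \ref{remmc23}, the transmission and reciprocal transmission values in Theorems \ref{teorhomc23} and \ref{teortmc23}, and the $2$-regularity of $\overline{\Gamma_{2^3}}$; your explicit use of transmission-regularity to collapse each edge-sum to $8$ times a single evaluation is exactly the intended (if unstated) computation, and all your arithmetic checks out, including $T_{AZ}=8(256/30)^3$ and $RT_{AZ}=8(2209/840)^3$.
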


Next, we consider the distance spectral radius of circulant $\overline{C_{2^h}(1,2,2^2,\ldots,2^{h-1})}$. The result follows from the definition of distance spectral radius and Theorem \ref{teomcd}.

\begin{theorem}
Let $\overline{\Gamma_{2^h}}=\overline{C_{2^h}(1,2,2^2,\ldots,2^{h-1})}$. Then
\begin{equation*}
    \rho(\overline{\Gamma_{2^h}})=n+2h-2.
\end{equation*}
\label{teorhomc2h}
\end{theorem}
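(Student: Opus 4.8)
The plan is to reduce the computation of $\rho(\overline{\Gamma_{2^h}})$ to a single vertex transmission and then count distances using Theorem \ref{teomcd}. Since $\overline{\Gamma_{2^h}}$ is circulant by Corollary \ref{cordmatccirc}, it is transmission-regular, and the distance spectral radius of a circulant graph equals the transmission of any one of its vertices. Hence it suffices to compute $Tr_{\overline{\Gamma_{2^h}}}(0)$, the sum of the entries in the row of $\textbf{D}(\overline{\Gamma_{2^h}})$ indexed by the $0$-vertex. This is exactly the kind of quantity the distance formula in Theorem \ref{teomcd} is built to supply.

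The next step is to justify that the distance distribution of Theorem \ref{teomcd} applies with $m=2$. Although Theorem \ref{teomcd} is stated for $m\geq 5$, Remark \ref{remmc23} already records that $\overline{\Gamma_{2^4}}$, $\overline{\Gamma_{2^5}}$, and $\overline{\Gamma_{2^6}}$ obey the same formula, and for larger $h$ I would reuse the diameter argument from the proof of Theorem \ref{teomcd}: the stated diameter formula gives $diam(\Gamma_{2^h})=\frac{h+1}{2}$ when $h$ is odd and $\frac{h}{2}$ when $h$ is even, so $diam(\Gamma_{2^h})\geq 4$ once $h\geq 7$, whence Lemma \ref{lemdiam4}, Lemma \ref{lemgut}, and Remark \ref{rem12} yield that vertex $0$ is at distance $2$ in $\overline{\Gamma_{2^h}}$ from each of its $\Gamma_{2^h}$-neighbors and at distance $1$ from every other vertex. (The degenerate cases $h\leq 3$ are genuinely exceptional: for $h=3$ one gets $\rho=16\neq n+2h-2$ by Theorem \ref{teorhomc23}, so the statement is understood for $h\geq 4$.)

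With the distance distribution in hand, I would count the vertices at distance $2$, namely the neighbors of $0$ in $\Gamma_{2^h}$, i.e. $\{\,\pm 2^i : 0\leq i\leq h-1\,\}\pmod{n}$. The key arithmetic point is the coincidence $2^{h-1}=n-2^{h-1}=\tfrac{n}{2}$, so that $+2^{h-1}$ and $-2^{h-1}$ collapse to a single vertex; consequently the neighbor set has exactly $2h-1$ distinct elements rather than $2h$ (equivalently, the degree lemma gives degree $2k-1=2h-1$ because $\tfrac{n}{2}\in S$). This is precisely the step that makes the final formula here differ from the $m\geq 5$ case, and I expect it to be the main subtlety to get right. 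The remaining $n-1-(2h-1)=n-2h$ vertices lie at distance $1$, so
\begin{equation*}
\rho(\overline{\Gamma_{2^h}})=Tr_{\overline{\Gamma_{2^h}}}(0)=2(2h-1)+1\cdot(n-2h)=n+2h-2,
\end{equation*}
which is the claimed value. The only real obstacle is the careful bookkeeping of the overlap at $\tfrac{n}{2}$; once that is handled the computation is immediate.
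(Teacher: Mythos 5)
Your proof is correct and follows essentially the same route as the paper, which obtains $\rho(\overline{\Gamma_{2^h}})$ from the distance distribution of Theorem \ref{teomcd} combined with the fact that the distance spectral radius of a circulant graph equals the transmission of any vertex. You additionally make explicit what the paper leaves implicit --- the applicability of the distance formula at $m=2$ via Remark \ref{remmc23} for $h\in\{4,5,6\}$ and the diameter argument for $h\geq 7$, together with the careful bookkeeping of the overlap at $\frac{n}{2}$ giving exactly $2h-1$ distance-two vertices, so that $\rho=2(2h-1)+(n-2h)=n+2h-2$ as claimed.
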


Another consequence of Theorem \ref{teomcd} talks about the reverse transmission of a vertex in $\overline{C_{2^h}(1,2,2^2,\ldots,2^{h-1})}$.

\begin{theorem}
Let $\overline{\Gamma_{2^h}}=\overline{C_{2^h}(1,2,2^2,\ldots,2^{h-1})}$ and let $v\in V(\overline{\Gamma_{2^h}})$. Then  
\begin{equation*}
    rs_{\overline{\Gamma_{2^h}}}(v)=n-h-\frac{1}{2}.
\end{equation*}
\label{teortmc2h}
\end{theorem}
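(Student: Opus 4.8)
The plan is to exploit the circulant structure of $\overline{\Gamma_{2^h}}$ together with the explicit distance values supplied by Theorem \ref{teomcd}. Since $\overline{\Gamma_{2^h}}$ is a circulant graph on $n=2^h$ vertices, Corollary \ref{cordmatccirc} tells us that its distance matrix $\textbf{D}(\overline{\Gamma_{2^h}})$ is circulant; every row is therefore a cyclic shift of the row indexed by the $0$-vertex, so all vertices share the same reciprocal transmission. It is thus enough to compute $rs_{\overline{\Gamma_{2^h}}}(0)$, the sum of the reciprocals of the nonzero entries in the $0$-row.

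First I would record the distance profile of the $0$-vertex. Applying Theorem \ref{teomcd} with $m=2$ (which is legitimate for the relevant values of $h$ by Remark \ref{remmc23}), the $0$-vertex lies at distance $2$ from every vertex in $\{1,2,2^2,\ldots,2^{h-1},\,n-2^{h-1},\ldots,n-1\}$ and at distance $1$ from every remaining nonzero vertex. Hence
\[
rs_{\overline{\Gamma_{2^h}}}(0)=\#\{\text{distance-}1\text{ vertices}\}\cdot 1+\#\{\text{distance-}2\text{ vertices}\}\cdot\tfrac12,
\]
and the entire computation reduces to counting these two sets correctly.

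The single point demanding care is the count of the distance-$2$ vertices, i.e. the neighbors of $0$ in $\Gamma_{2^h}$. The connection set $S=\{1,2,\ldots,2^{h-1}\}$ has $h$ elements, but because $2^{h-1}=\tfrac n2$ we have $-2^{h-1}\equiv 2^{h-1}\pmod n$, so the jump $2^{h-1}$ is self-paired and does not generate a separate vertex $n-2^{h-1}$. Consequently the number of distinct neighbors is $2h-1$ rather than $2h$; equivalently, since $\tfrac n2\in S$ and $|S|=h$, the degree formula for circulant graphs gives degree $2h-1$. The number of distance-$1$ vertices is then the remaining $(n-1)-(2h-1)=n-2h$ nonzero vertices.

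Substituting these two counts yields
\[
rs_{\overline{\Gamma_{2^h}}}(0)=(n-2h)+\frac{2h-1}{2}=n-h-\frac12,
\]
which is exactly the claimed value, and by the circulant (reciprocal-transmission-regular) structure this holds for every $v\in V(\overline{\Gamma_{2^h}})$. The only genuine obstacle is the off-by-one in the neighbor count forced by the self-paired jump $2^{h-1}=n/2$; once that subtlety is resolved via the degree formula, the remainder is a one-line arithmetic simplification. As a sanity check, the same counts reproduce the transmission $(n-2h)\cdot 1+(2h-1)\cdot 2=n+2h-2=\rho(\overline{\Gamma_{2^h}})$ of Theorem \ref{teorhomc2h}, confirming that the distance profile has been counted consistently.
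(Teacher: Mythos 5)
Your proposal is correct and takes essentially the same route as the paper, which states this theorem as an immediate consequence of Theorem \ref{teomcd} (with the small cases covered by Remark \ref{remmc23}): one sums the reciprocals of the entries in the $0$-row of the circulant distance matrix. Your explicit treatment of the self-paired jump $2^{h-1}=\frac{n}{2}$, yielding $2h-1$ distance-$2$ vertices and $n-2h$ distance-$1$ vertices, is precisely the count underlying the paper's value $n-h-\tfrac{1}{2}$ (and consistently reproduces $\rho(\overline{\Gamma_{2^h}})=n+2h-2$ of Theorem \ref{teorhomc2h}).
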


For the vertex-forwarding index and bounds for the edge-forwarding index of circulant $\overline{C_{2^h}(1,2,2^2,\ldots,2^{h-1})}$, it can be computed by combining Theorem \ref{teomcd} with Lemma \ref{lxi} and Lemma \ref{lpi}. The results are presented in the next two corollaries.

\begin{corollary}
Let $\overline{\Gamma_{2^h}}=\overline{C_{2^h}(1,2,2^2,\ldots,2^{h-1})}$. Then  
\begin{equation*}
    \xi(\overline{\Gamma_{2^h}})=2h-1.
\end{equation*}
\end{corollary}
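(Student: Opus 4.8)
The plan is to obtain the vertex-forwarding index directly from the distance spectral radius by invoking Lemma \ref{lxi}, since all of the genuine computational work has already been carried out in establishing Theorem \ref{teorhomc2h}. In other words, this corollary is a pure substitution once the hypotheses of Lemma \ref{lxi} have been checked.

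First I would verify those hypotheses: that $\overline{\Gamma_{2^h}}$ is a connected circulant graph of order $n = 2^h$. The circulant property is immediate from Lemma \ref{lemmei} (equivalently Corollary \ref{cordmatccirc}), since $\Gamma_{2^h}$ is circulant and the complement of a circulant graph is again circulant. For connectedness I would appeal to Theorem \ref{teomcd} (together with Remark \ref{remmc23} for the small cases $h=4,5,6$): the distance from the $0$-vertex to every other vertex is either $1$ or $2$, so $\overline{\Gamma_{2^h}}$ has diameter $2$ and is therefore connected. The case $h=3$ is excluded here because $\overline{\Gamma_{2^3}}$ has a different distance profile (Remark \ref{remmc23}) and was treated separately in the preceding corollaries.

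With the hypotheses in place, Lemma \ref{lxi} gives
\begin{equation*}
\xi(\overline{\Gamma_{2^h}}) = \rho(\overline{\Gamma_{2^h}}) - (n-1).
\end{equation*}
Substituting the value $\rho(\overline{\Gamma_{2^h}}) = n + 2h - 2$ supplied by Theorem \ref{teorhomc2h} and simplifying yields $\xi(\overline{\Gamma_{2^h}}) = (n+2h-2) - (n-1) = 2h-1$, which is the desired conclusion.

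There is no substantive obstacle at this final step; the mathematical difficulty lies upstream, in Theorem \ref{teomcd}. The key feature of the case $m=2$ is that the top generator $2^{h-1}$ equals $n/2$ and is hence self-inverse, so the $0$-vertex has exactly $2h-1$ (rather than $2h$) vertices at distance $2$ in the complement. This is precisely what produces the spectral radius $n+2h-2$ and therefore the answer $2h-1$. The only care I would take is to ensure that the admissible range of $h$ is respected when citing Theorem \ref{teomcd}, so that the distance-$2$ count $2h-1$ is correct.
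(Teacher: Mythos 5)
Your proposal is correct and follows essentially the same route as the paper, which obtains the corollary by combining Theorem \ref{teomcd} (via the spectral radius $\rho(\overline{\Gamma_{2^h}})=n+2h-2$ of Theorem \ref{teorhomc2h}) with Lemma \ref{lxi} and simplifying $(n+2h-2)-(n-1)=2h-1$. Your additional checks --- connectedness via the diameter-$2$ distance profile, the circulant property via Lemma \ref{lemmei}, the exclusion of $h=3$, and the observation that the self-inverse generator $2^{h-1}=n/2$ is what yields the count $2h-1$ --- are all sound and in fact make explicit hypotheses the paper leaves implicit.
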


\begin{corollary}
Let $\overline{\Gamma_{2^h}}=\overline{C_{2^h}(1,2,2^2,\ldots,2^{h-1})}$. Then  
\begin{equation*}
    \frac{2(n+2h-2)}{n-2h}\leq \pi(\overline{\Gamma_{2^h}})\leq 6h-1.
\end{equation*}
\end{corollary}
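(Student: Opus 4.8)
The plan is to invoke Lemma \ref{lpi}, which supplies both bounds on $\pi$ for any connected $r$-regular circulant graph in terms of its order $n$, its regularity $r$, and its distance spectral radius $\rho$. Since $\overline{\Gamma_{2^h}}$ is circulant by Lemma \ref{lemmei} and is connected with diameter two by Theorem \ref{teomcd}, and since its distance spectral radius is already recorded as $\rho(\overline{\Gamma_{2^h}})=n+2h-2$ in Theorem \ref{teorhomc2h}, the only quantity left to pin down before applying Lemma \ref{lpi} is the regularity $r$.

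First I would compute the regularity of the underlying circulant $\Gamma_{2^h}=C_{2^h}(1,2,\ldots,2^{h-1})$. Its connection set $S=\{1,2,\ldots,2^{h-1}\}$ has $|S|=h$, and crucially $\frac{n}{2}=2^{h-1}\in S$. Hence the first case of the degree formula for circulant graphs (Lemma 2.1) applies, yielding $deg_{\Gamma_{2^h}}(v)=2h-1$ for every vertex $v$. Passing to the complement on $n=2^h$ vertices, each vertex of $\overline{\Gamma_{2^h}}$ has degree $(n-1)-(2h-1)=n-2h$, so $\overline{\Gamma_{2^h}}$ is $(n-2h)$-regular; this is the value $r$ to be fed into Lemma \ref{lpi}.

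Finally I would substitute $r=n-2h$ and $\rho=n+2h-2$ into the two bounds of Lemma \ref{lpi}. The lower bound $\frac{2\rho}{r}$ becomes $\frac{2(n+2h-2)}{n-2h}$ at once, and the upper bound $n+\rho-(2r-1)=n+(n+2h-2)-(2(n-2h)-1)$ collapses, after the $2n$ terms cancel, to $6h-1$. Both expressions coincide with the claimed inequalities, which finishes the proof.

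The only genuinely delicate point is the regularity computation: one must observe that $2^{h-1}$ is exactly $\frac{n}{2}$ and therefore triggers the $2k-1$ branch of the degree formula rather than the $2k$ branch. Missing this would shift $r$ by one and corrupt both bounds simultaneously; the remainder is routine substitution and algebraic simplification.
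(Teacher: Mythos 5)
Your proposal is correct and follows essentially the same route as the paper, which likewise obtains both bounds by feeding $\rho(\overline{\Gamma_{2^h}})=n+2h-2$ from Theorem \ref{teorhomc2h} and the vertex-regularity $r=n-2h$ into Lemma \ref{lpi}. Your explicit justification of the regularity via the $\frac{n}{2}=2^{h-1}\in S$ branch of the degree formula is exactly the computation underlying the paper's stated regularity $n-2h$, so nothing is missing.
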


The next series of results give the exact values of some distance-based topological indices of circulant $\overline{C_{2^h}(1,2,2^2,\ldots,2^{h-1})}$. The results follow from the definition of the topological indices combined with Theorem \ref{teomcd}, Theorem \ref{teorhomc2h}, Theorem \ref{teortmc2h} and the fact that the graph is vertex-regular graph with vertex-regularity $n-2h$.  

\begin{corollary}
Let $\overline{\Gamma_{2^h}}=\overline{C_{2^h}(1,2,2^2,\ldots,2^{h-1})}$. Then
\begin{multicols}{2}
\begin{enumerate}[(i)]
    \item $W(\overline{\Gamma_{2^h}})=\frac{n(n+2h-2)}{2}$
    \item $S(\overline{\Gamma_{2^h}})=n(n-2h)(n+2h-2)$
    \item $G(\overline{\Gamma_{2^h}})=\frac{n(n+2h-2)(n-2h)^2}{2}$
    \item $WW(\overline{\Gamma_{2^h}})=\frac{n(2n+8h-6)}{4}$
    \item $H(\overline{\Gamma_{2^h}})=\frac{n(n-h-\frac{1}{2})}{2}$
    \item $H_A(\overline{\Gamma_{2^h}})=n(n-2h)(n-h-\frac{1}{2})$
    \item $H_M(\overline{\Gamma_{2^h}})=\frac{n(n-h-\frac{1}{2})(n-2h)^2}{2}$
    \item $T_{AG}(\overline{\Gamma_{2^h}})=\frac{n(n-2h)}{2}$
    \item $T_{GA}(\overline{\Gamma_{2^h}})=\frac{n(n-2h)}{2}$
    \item $T_{SC}(\overline{\Gamma_{2^h}})=\frac{n(n-2h)}{2\sqrt{2}\sqrt{n+2h-2}}$
    \item $T_{ABC}(\overline{\Gamma_{2^h}})=\frac{n(n-2h)\sqrt{n+2h-3}}{\sqrt{2}(n+2h-2)}$
    \item $T_{AZ}(\overline{\Gamma_{2^h}})=\frac{n(n-2h)(n+2h-2)^6}{16(n+2h-3)^3}$
    \item $RT_{AG}(\overline{\Gamma_{2^h}})=\frac{n(n-2h)}{2}$
    \item $RT_{GA}(\overline{\Gamma_{2^h}})=\frac{n(n-2h)}{2}$
    \item $RT_{SC}(\overline{\Gamma_{2^h}})=\frac{n(n-2h)}{2\sqrt{2n-2h-1}}$
    \item $RT_{ABC}(\overline{\Gamma_{2^h}})=\frac{n(n-2h)\sqrt{2n-2h-3}}{2n-2h-1}$
    \item $RT_{AZ}(\overline{\Gamma_{2^h}})=\frac{n(2h-n)(1+2h-2n)^6}{128(3+2h-2n)^3}$.
\end{enumerate}
\end{multicols}
\end{corollary}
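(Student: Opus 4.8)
The plan is to exploit the strong regularity of $\overline{\Gamma_{2^h}}$: being a circulant graph it is vertex-transitive, hence simultaneously degree-regular, transmission-regular, and reciprocal-transmission-regular. First I would read off the distance profile of a single vertex from Theorem \ref{teomcd} with $m=2$. The vertices at distance $2$ from $0$ are $\{1,2,4,\ldots,2^{h-1}\}$ together with their negatives $\{n-1,n-2,\ldots,n-2^{h-2}\}$; since $2^{h-1}=n/2$ is self-paired, this yields exactly $2h-1$ vertices at distance $2$, while the remaining $n-2h$ nonzero vertices lie at distance $1$. This recovers the vertex-regularity $r=n-2h$, the common transmission $\sigma=Tr(v)=2(2h-1)+(n-2h)=n+2h-2$ of Theorem \ref{teorhomc2h}, and the common reciprocal transmission $rs(v)=\tfrac12(2h-1)+(n-2h)=n-h-\tfrac12$ of Theorem \ref{teortmc2h}. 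I would also record $|E(\overline{\Gamma_{2^h}})|=\tfrac12 n r=\tfrac{n(n-2h)}{2}$.

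For the purely distance-based indices (i), (iv), (v) I would use transmission-regularity directly: $W=\tfrac12\sum_v Tr(v)=\tfrac{n(n+2h-2)}{2}$ and $H=\tfrac12\sum_v rs(v)=\tfrac{n(n-h-1/2)}{2}$. For $WW$ I would additionally compute $\sum_{v\neq0}d(0,v)^2=4(2h-1)+(n-2h)=n+6h-4$, so that $\tfrac12\sum_{\{v_i,v_j\}}d^2=\tfrac{n(n+6h-4)}{4}$; adding half of $W$ gives $WW=\tfrac{n(2n+8h-6)}{4}$. For the distance-degree indices (ii), (iii), (vi), (vii) I would pull the constant degree factors out of each pair-sum, writing $\deg(v_i)+\deg(v_j)=2r$ and $\deg(v_i)\deg(v_j)=r^2$, which gives $S=2rW$, $G=r^2W$, $H_A=2rH$, $H_M=r^2H$ with $r=n-2h$.

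Finally, for the transmission-based indices (viii)--(xii) and the reciprocal-transmission-based indices (xiii)--(xvii), each is a sum over edges whose summand depends only on the two endpoint weights. By (reciprocal-)transmission-regularity these weights are the constants $\sigma$ (resp.\ $rs$), so every summand is a fixed number and each index equals $|E|$ times that number. I would therefore substitute $\sigma=n+2h-2$ (resp.\ $rs=n-h-\tfrac12$) into the single-edge values, for instance $\tfrac{2\sqrt{\sigma\sigma}}{\sigma+\sigma}=1$, $\tfrac1{\sqrt{2\sigma}}$, $\sqrt{\tfrac{2\sigma-2}{\sigma^2}}$, and $\bigl(\tfrac{\sigma^2}{2\sigma-2}\bigr)^3$, and multiply by $|E|=\tfrac{n(n-2h)}{2}$.

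No step presents a genuine obstacle; the work is essentially bookkeeping. The two places that require care are the vertex count at distance $2$ (the self-pairing of $2^{h-1}=n/2$ must be accounted for, else the transmission is off by one term) and the algebraic tidying of the four radical/cube indices $T_{ABC},T_{AZ},RT_{ABC},RT_{AZ}$, where one must rewrite $\sqrt{2\sigma-2}=\sqrt2\,\sqrt{n+2h-3}$ and clear the half-integer $rs=\tfrac{2n-2h-1}{2}$ to reach the stated closed forms.
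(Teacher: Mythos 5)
Your proposal is correct and follows exactly the route the paper intends: it justifies the corollary by combining the distance profile of Theorem \ref{teomcd} (specialized to $m=2$, where the self-paired jump $2^{h-1}=n/2$ gives $2h-1$ vertices at distance $2$), the transmission $n+2h-2$ of Theorem \ref{teorhomc2h}, the reciprocal transmission $n-h-\tfrac12$ of Theorem \ref{teortmc2h}, and the vertex-regularity $n-2h$, then substitutes into the definitions. The only difference is that you carry out the bookkeeping the paper leaves implicit (e.g.\ the $d^2$-sum for $WW$ and the simplification of the sign-flipped form of $RT_{AZ}$, which agrees with your $\frac{n(n-2h)(2n-2h-1)^6}{128(2n-2h-3)^3}$ since the sixth power absorbs the sign), which is a strict improvement in rigor, not a different method.
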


Finally, we consider the circulant $\overline{C_{m^h}(1,m,m^2,\ldots,m^{h-1})}$ where $m\geq 3$. We begin by determining its distance spectral radius. The result follows from the definition of distance spectral radius and Theorem \ref{teomcd}.

\begin{theorem}
Let $\overline{\Gamma_{m^h}}=\overline{C_{m^h}(1,m,m^2,\ldots,m^{h-1})}$ where $m\geq 3$. Then
\begin{equation*}
    \rho(\overline{\Gamma_{m^h}})=n+2h-1.
\end{equation*}
\label{teorhomcmh}
\end{theorem}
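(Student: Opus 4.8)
The plan is to reduce the computation of the distance spectral radius to a transmission count, and then to read off that transmission from the distance data already established in Theorem \ref{teomcd}. Since $\overline{\Gamma_{m^h}}$ is circulant by Corollary \ref{cordmatccirc}, the Lin et al.\ lemma equating the distance spectral radius of a circulant graph with the transmission of any single vertex gives $\rho(\overline{\Gamma_{m^h}})=Tr_{\overline{\Gamma_{m^h}}}(0)$. Thus it suffices to sum the entries in the row of $\textbf{D}(\overline{\Gamma_{m^h}})$ indexed by the $0$-vertex.

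First I would read off from Theorem \ref{teomcd} that the only vertices at distance $2$ from $0$ are those in the set
\[
A=\{1,m,m^2,\ldots,m^{h-1}\}\cup\{n-m^{h-1},\ldots,n-m,n-1\},
\]
while every other nonzero vertex lies at distance $1$. Writing $n-1$ for the number of nonzero vertices, the transmission becomes $2|A|+1\cdot(n-1-|A|)=n-1+|A|$, so the whole calculation collapses to determining $|A|$.

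The key step, and the one place where the hypothesis $m\geq 3$ is genuinely used, will be verifying that the two blocks comprising $A$ are internally distinct and mutually disjoint, so that $|A|=2h$. The powers $m^0,\ldots,m^{h-1}$ are pairwise distinct because $m\geq 2$, and the same holds for their reflections $n-m^i$. For the disjointness I would observe that an equality $m^i=n-m^j$ would force $m^i+m^j=m^h$; but when $m\geq 3$ we have $m^i+m^j\leq 2m^{h-1}<m\cdot m^{h-1}=m^h$, a contradiction. This is precisely the inequality that fails for $m=2$, where $m^{h-1}$ coincides with its own reflection $n-m^{h-1}$; that single collision is what lowers $|A|$ to $2h-1$ and yields the smaller value $n+2h-2$ in Theorem \ref{teorhomc2h}. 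Hence here $|A|=2h$.

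Substituting $|A|=2h$ gives $Tr_{\overline{\Gamma_{m^h}}}(0)=n-1+2h$, and therefore $\rho(\overline{\Gamma_{m^h}})=n+2h-1$, as claimed. The only remaining subtlety is that Theorem \ref{teomcd} is stated for $m\geq 5$, whereas we need the distance formula for $m=3$ and $m=4$ as well; here I would note that the diameter argument of Theorem \ref{teomcd} (via Lemma \ref{lemdiam4}, Lemma \ref{lemgut}, and Remark \ref{rem12}) already applies once $diam(\Gamma_{m^h})\geq 4$, which covers all larger exponents $h$, and that the finitely many small exceptions are handled by Remark \ref{remmc23}.
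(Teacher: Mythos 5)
Your proposal is correct and follows essentially the same route as the paper: the paper likewise obtains $\rho(\overline{\Gamma_{m^h}})$ as the transmission of the $0$-vertex (via the Lin et al.\ lemma for circulant graphs) read directly off the distance data of Theorem \ref{teomcd}, with the $m=3,4$ cases resting on Remark \ref{remmc23} and the diameter argument, exactly as you indicate. Your explicit count $|A|=2h$ for $m\geq 3$, including the inequality $m^i+m^j\leq 2m^{h-1}<m^h$ and the observation that its failure at $m=2$ (where $m^{h-1}=n-m^{h-1}$) produces the smaller value in Theorem \ref{teorhomc2h}, merely makes precise a step the paper leaves implicit; note only that for $(m,h)\in\{(3,1),(4,1)\}$ the complement is disconnected, an edge case the paper's statement also tacitly excludes.
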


Another consequence of Theorem \ref{teomcd} talks about the reverse transmission of a vertex in $\overline{C_{m^h}(1,m,m^2,\ldots,m^{h-1})}$ where $m\geq 3$.

\begin{theorem}
 Let $\overline{\Gamma_{m^h}}=\overline{C_{m^h}(1,m,m^2,\ldots,m^{h-1})}$ where $m\geq 3$ and let $v\in V(\overline{\Gamma_{m^h}})$. Then  
\begin{equation*}
    rs_{\overline{\Gamma_{m^h}}}(v)=n-h-1.
\end{equation*}
\label{teortmcmh}
\end{theorem}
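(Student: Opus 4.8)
The plan is to exploit the vertex-transitivity of the circulant $\overline{\Gamma_{m^h}}$ together with the explicit distance formula already established in Theorem \ref{teomcd}. Since $\overline{\Gamma_{m^h}}$ is a circulant graph it is vertex-transitive, so the reciprocal transmission $rs_{\overline{\Gamma_{m^h}}}(v)$ is independent of the choice of $v$; hence it suffices to compute it for the vertex $v=0$, using
\begin{equation*}
rs_{\overline{\Gamma_{m^h}}}(0)=\sum_{u\neq 0}\frac{1}{d_{\overline{\Gamma_{m^h}}}(0,u)}.
\end{equation*}

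First I would invoke the distance formula of Theorem \ref{teomcd}, which is valid for $m\geq 3$ once the finitely many small cases are absorbed by Remark \ref{remmc23}. It tells us that from $0$ every other vertex lies at distance either $1$ or $2$: distance $2$ is attained exactly on the set $T=\{1,m,m^2,\ldots,m^{h-1}\}\cup\{n-m^{h-1},\ldots,n-m,n-1\}$, and distance $1$ on all remaining nonzero vertices. The reciprocal transmission therefore collapses to a weighted count: writing $t=|T|$, we get
\begin{equation*}
rs_{\overline{\Gamma_{m^h}}}(0)=(n-1-t)\cdot 1+t\cdot\tfrac{1}{2}=n-1-\tfrac{t}{2}.
\end{equation*}

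The key step, and the only place the hypothesis $m\geq 3$ is genuinely used, is to show that $t=2h$. The set $T$ is precisely $S\cup(-S)\pmod n$, where $S=\{1,m,\ldots,m^{h-1}\}$ is the connection set of $\Gamma_{m^h}$; the $h$ powers are pairwise distinct and the map $x\mapsto n-x$ carries them to $h$ further residues. The potential obstruction is a collision $m^i\equiv n-m^j\pmod n$, the sharpest instance being $m^{h-1}=n-m^{h-1}$, i.e. $2m^{h-1}=m^h$, which forces $m=2$. More generally, $n/2=m^h/2$ is never a power of $m$ when $m\geq 3$, so $n/2\notin S$ and no element of $S$ is its own negative; hence $|S\cup(-S)|=2h$ cleanly.

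Substituting $t=2h$ then yields $rs_{\overline{\Gamma_{m^h}}}(v)=n-1-h=n-h-1$, as claimed. I expect the arithmetic itself to be routine; the main (minor) obstacle is the bookkeeping that the distance-$2$ set has no coincidences, which is precisely where the contrast with the $m=2$ case of Theorem \ref{teortmc2h} --- whose answer carries the extra $\tfrac{1}{2}$ arising from the self-paired element $2^{h-1}=n/2$ --- becomes transparent.
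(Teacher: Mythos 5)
Your proposal is correct and follows essentially the same route as the paper: the paper gives no separate argument, presenting the theorem as a direct consequence of Theorem \ref{teomcd}, which is exactly your computation $rs_{\overline{\Gamma_{m^h}}}(v)=(n-1-2h)\cdot 1+2h\cdot\tfrac12=n-h-1$ (with the small cases for $m=3,4$ absorbed by Remark \ref{remmc23}, as you note). One caveat in your bookkeeping: ruling out the self-paired collision $2m^{h-1}=m^h$ (and $n/2\notin S$) does not by itself give $|S\cup(-S)|=2h$, since cross collisions $m^i\equiv n-m^j\pmod{n}$ with $i\neq j$ must also be excluded; these die just as quickly, because $m^i+m^j=m^h$ would give, after dividing by $m^{\min(i,j)}$, the equation $1+m^{|i-j|}=m^{h-\min(i,j)}$, whose left side is $\equiv 1\pmod m$ while the right side is $\equiv 0\pmod m$ when $m\geq 3$ --- so the count $t=2h$, and hence your proof, stands.
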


For the vertex-forwarding index and bounds for the edge-forwarding index of circulant $\overline{C_{m^h}(1,m,m^2,\ldots,m^{h-1})}$ where $m\geq 3$, it can be computed by combining Theorem \ref{teomcd} with Lemma \ref{lxi} and Lemma \ref{lpi}. The results are presented in the next two corollaries.

\begin{corollary}
Let $m\geq 3$ and $\overline{\Gamma_{m^h}}=\overline{C_{m^h}(1,m,m^2,\ldots,m^{h-1})}$. Then  
\begin{equation*}
    \xi(\overline{\Gamma_{m^h}})=2h.
\end{equation*}
\end{corollary}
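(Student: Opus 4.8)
The plan is to read the claim off directly from Lemma \ref{lxi}, which for any connected circulant graph $\Gamma$ of order $n$ supplies the closed form $\xi(\Gamma)=\rho(\Gamma)-(n-1)$, and then to substitute the distance spectral radius already computed in Theorem \ref{teorhomcmh}. So the entire content of the proof is a check of hypotheses followed by one arithmetic step; all the genuine work has been discharged in the preceding theorems.

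First I would verify that $\overline{\Gamma_{m^h}}$ meets the hypotheses of Lemma \ref{lxi}. By Lemma \ref{lemmei} the complement of a circulant graph is again circulant, so $\overline{\Gamma_{m^h}}$ is a circulant graph; its order is $n=m^h$. Connectivity is exactly what the distance structure of Theorem \ref{teomcd} provides: every vertex lies at distance $1$ or $2$ from the origin, so $\overline{\Gamma_{m^h}}$ has diameter two and is in particular connected for all $m\geq 3$ and $h\geq 1$. Hence Lemma \ref{lxi} applies and moreover gives $\xi(\overline{\Gamma_{m^h}})=\xi_m(\overline{\Gamma_{m^h}})$.

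Next I would substitute. Theorem \ref{teorhomcmh} gives $\rho(\overline{\Gamma_{m^h}})=n+2h-1$ for $m\geq 3$, so
\[
\xi(\overline{\Gamma_{m^h}})=\rho(\overline{\Gamma_{m^h}})-(n-1)=(n+2h-1)-(n-1)=2h,
\]
which is the asserted value.

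There is no real computational obstacle here, since the distance spectral radius and the connectivity are both inherited from the earlier analysis. The only point deserving a line of care is confirming that the connectedness and the value of $\rho$ genuinely hold across the full range $m\geq 3$, including the small multiplicative cases flagged in Remark \ref{remmc23}; but Theorem \ref{teorhomcmh} is already stated for $m\geq 3$ and rests on precisely the diameter-two conclusion of Theorem \ref{teomcd}, so no separate case analysis is needed and the corollary follows immediately.
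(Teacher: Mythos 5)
Your proposal is correct and follows exactly the paper's route: the paper obtains this corollary by combining Theorem \ref{teomcd} (via the spectral radius $\rho(\overline{\Gamma_{m^h}})=n+2h-1$ of Theorem \ref{teorhomcmh}) with Lemma \ref{lxi}, which is precisely your substitution $\xi=\rho-(n-1)=2h$. Your explicit verification of the hypotheses of Lemma \ref{lxi} (circulant by Lemma \ref{lemmei}, connected since the complement has diameter two) is left implicit in the paper, so your write-up is if anything slightly more careful than the original.
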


\begin{corollary}
Let $m\geq 3$ and $\overline{\Gamma_{m^h}}=\overline{C_{m^h}(1,m,m^2,\ldots,m^{h-1})}$. Then  
\begin{equation*}
    \frac{2(n+2h-1)}{n-2h-1}\leq \pi(\overline{\Gamma_{m^h}})\leq 6h+2.
\end{equation*}
\end{corollary}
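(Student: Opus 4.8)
The plan is to apply Lemma \ref{lpi} directly, since $\overline{\Gamma_{m^h}}$ is a connected regular circulant graph whose order and distance spectral radius are already in hand. By Corollary \ref{cordmatccirc} the complement $\overline{\Gamma_{m^h}}$ is circulant, and by Theorem \ref{teomcd} every entry of the distance vector from the $0$-vertex is $0$, $1$, or $2$; in particular the graph is connected (indeed of diameter two). Hence Lemma \ref{lpi} is applicable with order $n=m^h$, and the task reduces to identifying the regularity $r$ and then substituting the known spectral radius.

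First I would pin down $r$. The connection set $S=\{1,m,m^2,\ldots,m^{h-1}\}$ has $|S|=h$, and since $m\geq 3$ the equation $m^h/2=m^j$ has no solution with $0\leq j\leq h-1$, so $\tfrac{n}{2}\notin S$. By the degree lemma at the start of Section~2, $\Gamma_{m^h}$ is therefore $2h$-regular, and consequently its complement is $(n-1-2h)$-regular; that is, $r=n-2h-1$. Next I would recall from Theorem \ref{teorhomcmh} that $\rho(\overline{\Gamma_{m^h}})=n+2h-1$, and feed $\rho$ and $r$ into the two bounds of Lemma \ref{lpi}. The lower bound $\tfrac{2\rho}{r}$ becomes $\tfrac{2(n+2h-1)}{n-2h-1}$ verbatim. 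For the upper bound $n+\rho-(2r-1)$, a short simplification of $n+(n+2h-1)-\bigl(2(n-2h-1)-1\bigr)$ cancels the $n$-terms and yields $6h+2$.

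I do not expect a genuine obstacle here: the result is a substitution into an established bound followed by elementary algebra. The only step demanding care is the regularity computation, namely the verification that $\tfrac{n}{2}\notin S$ precisely when $m\geq 3$, since this is exactly what separates the value $r=n-2h-1$ used here from the value $r=n-2h$ occurring in the $m=2$ case treated earlier. Once $r$ is correctly fixed, both the lower and upper bounds follow immediately from Lemma \ref{lpi}.
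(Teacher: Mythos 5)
Your proposal is correct and follows essentially the same route as the paper: combining the distance structure of Theorem \ref{teomcd} (hence $\rho(\overline{\Gamma_{m^h}})=n+2h-1$ from Theorem \ref{teorhomcmh}) with Lemma \ref{lpi}, using the regularity $r=n-2h-1$. Your explicit check that $\tfrac{n}{2}\notin S$ for $m\geq 3$, which is what distinguishes $r=n-2h-1$ here from $r=n-2h$ in the $m=2$ case, is exactly the detail the paper leaves implicit.
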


The next series of results give the exact values of some distance-based topological indices of circulant $\overline{C_{m^h}(1,m,m^2,\ldots,m^{h-1})}$ where $m\geq 3$. The results follow from the definition of the topological indices combined with Theorem \ref{teomcd}, Theorem \ref{teorhomcmh}, Theorem \ref{teortmcmh} and the fact that the graph is vertex-regular graph with vertex-regularity $n-2h-1$.  

\begin{corollary}
Let $\overline{\Gamma_{m^h}}=\overline{C_{m^h}(1,m,m^2,\ldots,m^{h-1})}$ where $m\geq 3$. Then
\begin{multicols}{2}
\begin{enumerate}[(i)]
    \item $W(\overline{\Gamma_{m^h}})=\frac{n(n+2h-1)}{2}$
    \item $S(\overline{\Gamma_{m^h}})=n(n-2h-1)(n+2h-1)$
    \item $G(\overline{\Gamma_{m^h}})=\frac{n(n+2h-1)(n-2h-1)^2}{2}$
    \item $WW(\overline{\Gamma_{m^h}})=\frac{n(n+4h-1)}{2}$
    \item $H(\overline{\Gamma_{m^h}})=\frac{n(n-h-1)}{2}$
    \item $H_A(\overline{\Gamma_{m^h}})=n(n-2h-1)(n-h-1)$
    \item $H_M(\overline{\Gamma_{m^h}})=\frac{n(n-h-1)(n-2h-1)^2}{2}$
    \item $T_{AG}(\overline{\Gamma_{m^h}})=\frac{n(n-2h-1)}{2}$
    \item $T_{GA}(\overline{\Gamma_{m^h}})=\frac{n(n-2h-1)}{2}$
    \item $T_{SC}(\overline{\Gamma_{m^h}})=\frac{n(n-2h-1)}{2\sqrt{2}\sqrt{n+2h-1}}$
    \item $T_{ABC}(\overline{\Gamma_{m^h}})=\frac{n(n-2h-1)\sqrt{\frac{n}{2}+h-1}}{\sqrt{2}(n+2h-1)}$
    \item $T_{AZ}(\overline{\Gamma_{m^h}})=\frac{n(n-2h-1)(n+2h-1)^6}{16(n+2h-2)^3}$
    \item $RT_{AG}(\overline{\Gamma_{m^h}})=\frac{n(n-2h-1)}{2}$
    \item $RT_{GA}(\overline{\Gamma_{m^h}})=\frac{n(n-2h-1)}{2}$
    \item $RT_{SC}(\overline{\Gamma_{m^h}})=\frac{n(n-2h-1)}{2\sqrt{2}\sqrt{n-h-1}}$
    \item $RT_{ABC}(\overline{\Gamma_{m^h}})=\frac{n(n-2h-1)\sqrt{n-h-2}}{\sqrt{2}(n-h-1)}$
    \item $RT_{AZ}(\overline{\Gamma_{m^h}})=\frac{n(1+2h-n)(1+h-n)^6}{16(2+h-n)^3}$.
\end{enumerate}
\end{multicols}
\end{corollary}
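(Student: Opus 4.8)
The plan is to reduce all seventeen indices to single closed forms by exploiting three regularity properties that hold for $\overline{\Gamma_{m^h}}$ when $m\geq 3$: it is vertex-regular of degree $r=n-2h-1$, transmission-regular with common transmission $\sigma:=Tr=\rho(\overline{\Gamma_{m^h}})=n+2h-1$ (Theorem \ref{teorhomcmh}), and reciprocal-transmission-regular with common value $rs=n-h-1$ (Theorem \ref{teortmcmh}). First I would record the ``distance profile'' coming from Theorem \ref{teomcd}: from any fixed vertex there are exactly $2h$ vertices at distance $2$ (the former neighbours in $\Gamma_{m^h}$) and exactly $n-1-2h$ vertices at distance $1$. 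For $m\geq 3$ none of the jumps $1,m,\ldots,m^{h-1}$ equals $n/2$, so each vertex has degree $2h$ in $\Gamma_{m^h}$ and degree $r=n-2h-1$ in the complement; consequently $\Gamma_{m^h}$ has $nh$ edges and $|E(\overline{\Gamma_{m^h}})|=\tfrac{1}{2}nr=\tfrac{n(n-2h-1)}{2}$.

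With these in hand the indices split into three families. For the purely distance-based indices $W$, $WW$, $H$ I would use the profile directly: transmission-regularity gives $W=\tfrac{1}{2}n\sigma$ (equivalently, via Lemma \ref{lemgut}, $W=\binom{n}{2}+nh$), reciprocal-transmission-regularity gives $H=\tfrac{1}{2}n\,(rs)$, and for $WW$ I would compute $\sum d^2$ by weighting the $nh$ distance-$2$ pairs by $4$ and the remaining pairs by $1$, then combine with $\sum d=W$. For the distance-degree indices $S,G,H_A,H_M$ the key observation is that vertex-regularity makes $deg(v_i)+deg(v_j)=2r$ and $deg(v_i)deg(v_j)=r^2$ constant over every pair, so these factor out of the sum and yield $S=2rW$, $G=r^2W$, $H_A=2r\,H$, $H_M=r^2\,H$.

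For the ten transmission- and reciprocal-transmission-based indices the decisive point is that each is a sum over the edges of $\overline{\Gamma_{m^h}}$, and on every edge both endpoints carry the \emph{same} weight ($\sigma$ for the $T$-family, $rs$ for the $RT$-family) by transmission- and reciprocal-transmission-regularity. Hence each summand is a constant depending only on $\sigma$ (resp.\ $rs$), and the whole index equals $|E(\overline{\Gamma_{m^h}})|$ times that constant. In particular the arithmetic-geometric and geometric-arithmetic weights collapse to $1$, giving $T_{AG}=T_{GA}=RT_{AG}=RT_{GA}=\tfrac{n(n-2h-1)}{2}$, while the sum-connectivity, atom-bond-connectivity and augmented-Zagreb weights become $\tfrac{1}{\sqrt{2\sigma}}$, $\sqrt{\tfrac{2\sigma-2}{\sigma^2}}$ and $\bigl(\tfrac{\sigma^2}{2\sigma-2}\bigr)^3$ respectively (and likewise with $rs$ in place of $\sigma$), each multiplied by $\tfrac{n(n-2h-1)}{2}$.

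I expect no conceptual obstacle; the entire content is the recognition that each summand is constant once the three regularities are invoked. The only place demanding care is the final algebraic normalization, especially rewriting the atom-bond-connectivity terms $\sqrt{(2\sigma-2)/\sigma^2}$ and $\sqrt{(2\,rs-2)/rs^2}$ and the augmented-Zagreb cubes into the specific radical- and fraction-reduced forms stated, after substituting $\sigma=n+2h-1$ and $rs=n-h-1$; here it is easy to drop or misplace a factor of $\sqrt{2}$, so I would cross-check each simplification against the corresponding $\overline{C_n(1,a)}$ computation, which has identical structure with $\sigma=n+3$, $rs=n-3$, and $r=n-5$.
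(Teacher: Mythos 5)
Your proposal is correct and is exactly the paper's argument: the paper disposes of this corollary in one line by invoking Theorem \ref{teomcd}, Theorem \ref{teorhomcmh}, Theorem \ref{teortmcmh} and the vertex-regularity $n-2h-1$, and your reduction (constant summands over pairs or over the $\frac{n(n-2h-1)}{2}$ edges, by the three regularities) is precisely the computation those citations delegate. One remark vindicating the caution you raised about $\sqrt{2}$ factors: carrying your $T_{ABC}$ step through gives $\frac{n(n-2h-1)\sqrt{n+2h-2}}{\sqrt{2}\,(n+2h-1)}=\frac{n(n-2h-1)\sqrt{\frac{n}{2}+h-1}}{n+2h-1}$, so item (xi) as printed --- which halves the radicand \emph{and} keeps the $\sqrt{2}$ in the denominator --- understates the value by a factor of $\sqrt{2}$, while the analogous items (xvi), and (xvii) after sign-flipping the even and odd powers, check out.
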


\section{Conclusion}

In this research note, we were able to determine the distance matrix of the connected complement of the circulants $C_n(1,a)$ where $2\leq a\leq \frac{n}{2}$ and $C_{m^h}(1,m,m^2,\ldots,m^{h-1})$ where $m\geq 2$. As a consequence, we were able to compute for the distance spectral radius, vertex-forwarding index, and some distance-based topological indices of the connected circulants $\overline{C_n(1,a)}$ and $\overline{C_{m^h}(1,m,m^2,\ldots,m^{h-1})}$. As a possible research problem, we recommend the determination of the distance matrix, as well as the distance spectral radius, vertex-forwarding index and some distance-based topological indices of the connected complement of the circulant $C_n(1,2,\ldots,a)$, where $3\leq a\leq \frac{n}{2}$.   

\section*{Acknowledgements}
The creation of this paper and the research behind it will not be possible without the support of the Philippines' Department of Science and Technology Science Education Institute, Central Luzon State University, and De La Salle University.

\end{document}